\newtheorem{theorem}{Theorem}[section]
\newtheorem{lemma}[theorem]{Lemma}
\newtheorem{proposition}{Proposition}
\theoremstyle{definition}
\newtheorem{definition}[theorem]{Definition}
\newtheorem{remark}{Remark}
\newcommand{\LM}[1]{\hbox{\vrule width.2pt \vbox to#1pt{\vfill \hrule
			width#1pt
			height.2pt}}}
\def\LL{\!
	{\mathchoice{\>\LM7\>}{\>\LM7\>}{\,\LM5\,}{\,\LM{3.35}\,}}}
\newcommand{\dx}{\,\mathrm{d}x}
\newcommand{\e}{\varepsilon}
\newcommand{\lu}{{L}^1}
\newcommand{\lud}{\lu(D)}
\newcommand{\dist}{{\rm{dist}}}
\def\R{\mathbb{R}}
\def\Q{\mathbb{Q}}
\newcommand{\Hk}{{\mathcal H}^{k-1}}
\def\L{\mathcal{L}}
\def\w{\omega}
\def\Lw{\mathcal{L}(\w)}
\def\S{\mathcal{S}}
\def\NNw{\mathcal{N\!N}(\w)}
\def\Ard{\mathcal{A}^R(D)}
\def\a{\alpha}
\def\NNS{\mathcal{N\!N}(\mathcal{L})}
\def\rzd{r^{\prime}\mathbb{Z}^d}
\begin{document}

\author{Andrea Braides}
\address[Andrea Braides]{Dipartimento di Matematica, Universit\`{a} di Roma 'Tor Vergata', via della Ricerca Scientifica, 00133 Rome, Italy}
\email{braides@mat.uniroma2.it}

\author{Marco Cicalese}
\address[Marco Cicalese]{Zentrum Mathematik - M7, Technische Universit\"at M\"unchen, Boltzmannstrasse 3, 85747 Garching, Germany}
\email{cicalese@ma.tum.de}

\author{Matthias Ruf}
\address[Matthias Ruf]{Zentrum Mathematik - M7, Technische Universit\"at M\"unchen, Boltzmannstrasse 3, 85747 Garching, Germany}
\email{mruf@ma.tum.de}

\title{Continuum limit and stochastic homogenization of discrete ferromagnetic thin films}

\begin{abstract}
We study the discrete-to-continuum limit of ferromagnetic spin systems when the lattice spacing tends to zero. We assume that the atoms are part of a (maybe) non-periodic lattice close to a flat set in a lower dimensional space, typically a plate in three dimensions. Scaling the particle positions by a small parameter $\e>0$ we perform a $\Gamma$-convergence analysis of properly rescaled interfacial-type energies. We show that, up to subsequences, the energies converge to a surface integral defined on partitions of the flat space. In the second part of the paper we address the issue of stochastic homogenization in the case of random stationary lattices. A finer dependence of the homogenized energy on the average thickness of the random lattice is analyzed for an example of magnetic thin system obtained by a random deposition mechanism.  
\end{abstract}


\maketitle

\tableofcontents
\section{Introduction}
Polymeric magnets are known to be lighter and more flexible than conventional magnets. They can be easily manufactured to form thin films made of few layers and are currently considered as one of the main building blocks of the future generation of electronic devices. Under external magnetic fields they form Weiss domains whose wall energy is influenced by the thickness and the roughness of the film which in turn depends on the physical and chemical properties of the specific material at use. A fairly large amount of experimental results reconstruct the relation between film thickness and interfacial domain wall energy for different ferromagnetic materials (see \cite{KS} and references therein), but no rigorous explanation has appeared so far in this direction. Among the reasons for such an unsatisfactory analysis we single out one which has a geometric flavour: depositing magnetic particles on a substrate to obtain a thin films leads to disordered arrangements of particles and rough film surfaces which makes very difficult to formulate a right ansatz leading to study the correct (and simpler) continuum model. In this paper we look at this problem from a different perspective: we single out a simple Ising-type model for a thin film obtained by random deposition of magnetic particles on a flat substrate, for which the geometric part of the problem is still non trivial, and propose an ansatz-free variational analysis of such a film. Combining $\Gamma$-convergence and percolation theory we finally obtain a rigorous explanation of the relation between film thickness and domain-wall energy in some asymptotic regimes. 

\vspace{.5cm}

A simple way to model thin ferromagnetic polymeric materials at the micro scale first requires the definition of a polymeric matrix made of magnetic cells and then that of an interaction energy between those cells (see \cite{vollath2013} and reference therein for further details). The polymeric matrix of such a system can be seen as a random network whose nodes are the cross-linkers molecules of the 3-d polymeric magnet, which are supposed to entail the local magnetic properties of the system and to interact as magnetic elementary cells via a ferromagnetic Potts-type coupling. The system is supposed to be thin in the sense that the nodes of the matrix are within a small distance, of the order of the average distance between the nodes themselves, from a 2-d plane.  In presence of an external magnetic field or of proper boundary conditions, the ferromagnetic coupling induces the system to form mesoscopic Weiss domains, i.e. regions of constant magnetization.
 
\vspace{.5cm}

In this paper we aim at upscaling the system described above from its microscopic description to a mesoscopic one in a variational setting.
This consists in performing the limit of its energy as the average distance between the magnetic cells, which we denote by $\e$, goes to zero with respect to the macroscopic size of the system. Such a limit will have two main effects: it will allow us to describe the original discrete system as a continuum while at the same time it will reduce its dimension from three to two (or more in general from $d$ to $k$ with $2\leq k<d$). 

The discrete-to-continuum analysis in this paper is also part of a general study of the effects of discreteness in lattice systems on their macroscopic description. It is directly related to a series of papers describing the overall behaviour of spin energies \cite{CadlF,ABC,BP,BC,AlGe}. Moreover, discrete-to-continuum analyses for thin elastic objects in a deterministic setting have also been considered, e.g.~in \cite{ABC2,Sch,LPS}, and the behaviour of full-dimensional random lattices is dealt with in \cite{ACG2} (see also \cite{BLBL}). For dimension-reduction problems for continuum elastic objects we also refer to \cite{LDR,BFF}, the latter introducing a dimensionally reduced localization argument similar to the one use in the present paper.

\vspace{.5cm}
Using the same model as in \cite{ACR} we describe the polymeric matrix as a random network whose nodes $\L\subset \R^d$ form a thin {\sl admissible stochastic lattice}, meaning that the matrix is thin, i.e. there exist $k\in{\mathbb N}$ with $2\leq k<d$ and $M>0$ such that, identifying $\R^k$ with a linear subspace of $\R^d$,
\begin{equation*}
\dist(x,\R^k)\leq M\quad\hbox{ for all } x\in\L
\end{equation*} 
and that it is admissible according to the following standard definition (see \cite{ruelle} and also \cite{ACG2, BLBL} in the framework of rubber elasticity). 
We say that $\L$ is an admissible set of points if the following two requirements are satisfied:
\begin{itemize}
\item[(i)] there exists $r>0$ such that $|x-y|\geq r$ for all $x\neq y,\,x,y\in\L$,
\item[(ii)] there exists $R>0$ such that $\dist(x,\L)\leq R$ for all $x\in\R^k$.
\end{itemize}
Within this definition we may include `slices' of periodic lattices \cite{ABC2}, and also aperiodic geometries \cite{BrCaSo}.

Given a probability space $(\Omega,{\mathcal F},\mathbb P)$, a random variable 
${\mathcal L}:\Omega\to(\R^d)^{\mathbb{N}}$ is called an {\sl admissible stochastic lattice} if, uniformly with respect to $\omega\in\Omega$, ${\mathcal L}(\omega)$ is an admissible set of points. \\

We assume that the magnetization takes only finitely many values, that is to say we consider configurations $u:\e\L\to \S$ with a state-space $\S=\{s_1,\dots,s_q\}$ that we embed in the euclidean space $\R^{q}$. We have in mind the case of spin systems, where $u_i\in\{1,-1\}$. Note that even in that case it is sometime necessary to use a larger set of parameters $\S$ if frustration forces the formation of texture (see \cite{BC}). Note that if we have more than two parameters, we may have concentration phenomena of a third phase on the interfaces between two phases. A finer description of this phnomenon can be found in \cite{ACS}.

Associating a Voronoi tessellation $\mathcal{V}(\L)$ to the lattice $\L$, one introduces the set of nearest neighbours $\NNS$ as the set of those pairs of points in $\L$ whose Voronoi cells share a $(d-1)$-dimensional edge. 
This allows us to distinguish between long-range and short-range interactions introducing the following ($\L$-dependent) interactions
\begin{equation*}
f_{\e}(x,y,s_i,s_j)=\begin{cases}
f^{\e}_{nn}(x,y,s_i,s_j) &\mbox{if $(x,y)\in\mathcal{N\!N}(\L)$,}\\
f^{\e}_{lr}(x,y,s_i,s_j) &\mbox{otherwise,}
\end{cases}
\end{equation*}
that we assume to be non-negative and to satisfy the following coerciveness and growth assumptions.\\

\noindent \textbf{Hypothesis 1}\quad There exist $c>0$ and a decreasing function $J_{lr}:[0,+\infty)\rightarrow[0,+\infty)$ with 
\begin{equation*}
\int_{\R^k}J_{lr}(|x|)|x|\dx =J<+\infty
\end{equation*}
such that, for all $\e>0$, $x,y\in\R^d$ and $s_i,s_j\in\S$,
\begin{equation*}
c|s_i-s_j|\leq f_{nn}^{\e}(x,y,s_i,s_j)\leq J_{lr}(|x-y|)|s_i-s_j|,  \quad f_{lr}^{\e}(x,y,s_i,s_j)\leq J_{lr}(|x-y|)|s_i-s_j|. 
\end{equation*}
We remark that the decay of $J_{lr}$ is needed to control the effect of long-range interactions and we use the same bound for short-range interactions only to save notation.
\\
\hspace*{0,5cm}
Given $D\subset\R^k$ and denoted by $P_k:\R^d\to\R^k$ the projection onto $\R^k$, for a given configuration $u:\e\L\to\S$ we consider the energy per unit ($(k-1)$-dimensional) surface of $D$ to have the ferromagnetic Potts form (see also \cite{ABC, ACR, ACS, BC}) given by
\begin{equation*}
E_{\e}(u)=\sum_{\substack{\e x,\e y\in P_k^{-1}D}}\e^{k-1} f_{\e}(x,y,u(\e x),u(\e y)).
\end{equation*}

Since the sets $\e\L$ will eventually shrink to a $k$-dimensional set, we conveniently describe the system in terms of an {\sl average spin order parameter} $Pu:\e P_k\L\to{\rm co}(\S)$ defined on the $k$-dimensional set $\e P_{k}\L$ by 
\begin{equation*}
Pu(z):=\frac{1}{\#\left(P_k^{-1}(z)\cap\e\L\right)}\sum_{\e x\in P_k^{-1}(z)\cap\e\L}u(\e x).
\end{equation*}
We then embed the energies $E_{\e}$ in $L^{1}(D)$ by identifying $Pu$ with a function piecewise constant on the cells of the Voronoi tessellation of $P_k\L$, define the convergence $u_{\e}\to u$ in $D$ in the sense that the piecewise constant functions $Pu_{\e}$ converge to $u$ strongly in $L^1(D)$ and perform the $\Gamma$-convergence analysis with respect to this notion (see Section \ref{sec:model} for further details).

\smallskip

In Theorem \ref{mainthm1} we prove a compactness and integral representation result for the $\Gamma$-limit $E$ of $E_{\e}$, stating that, up to subsequences, this is finite only on $BV(D,\S)$, where it takes the integral form
\begin{equation*}
E(u)=\int_{S_u}\phi^{\w}(x,u^+,u^-,\nu_u)\,\mathrm{d}\mathcal{H}^{k-1}.
\end{equation*}
In this formula $S_u$ is the jump set of $u$, the functions $u^+$ and $u^-$ represent the traces on both sides of the jump set, $\nu_{u}\in S^{k-1}$ is the measure-theoretical normal to $S_u$ and $\mathcal{H}^{k-1}$ the $(k-1)$-dimensional Hausdorff measure. 
The function $\phi^{\w}$ is interpreted as the domain-wall interaction energy (per unit $(k-1)$-dimensional area) between Weiss domains. \\

The dependence of such an energy on the randomness of the lattice is studied in Section \ref{homogenization} in the context of stochastic homogenization assuming the thin random lattice to be stationary (or ergodic) in the directions of the flat subspace to which it is close to and the interaction coefficients to be invariant under translation in these directions. More precisely we assume that there exists a measure-preserving group action $(\tau_z)_{z\in\mathbb{Z}^k}$ on $\Omega$ such that, almost surely in $\Omega$, $\mathcal{L}(\tau_z\omega)=\Lw+z$ (if in addition $(\tau_z)_{z\in\mathbb{Z}^k}$ is ergodic, then also the lattice $\mathcal{L}$ is said to be ergodic) and the following structural assumption:\\

\textbf{Hypothesis 2}\quad There exist functions $f_{nn},f_{lr}:\R^k\times\R^{2(d-k)}\times\S^2\to [0,+\infty)$ such that, setting $\Delta_k(x,y)=(y_1-x_1,\dots,y_k-x_k,x_{k+1},y_{k+1},\dots,x_d,y_d)$, it holds
\begin{equation*}
f_{nn}^{\e}(x,y,s_i,s_j)=f_{nn}(\Delta_k(x,y),s_i,s_j),\quad f_{lr}^{\e}(x,y,s_i,s_j)=c_{lr}(\Delta_k(x,y),s_i,s_j).
\end{equation*}

In Theorem \ref{mainthm2} we prove that under Hypotheses $1$ and $2$ and assuming the stationarity (or ergodicity) in the sense specified above, the $\Gamma$-limit of $E_{\e}$ as $\e\to 0$ exists and is finite only on $BV(D,\S)$ where it takes the form
\begin{equation*}
E^{\omega}_{\text{hom}}(u)=
\int_{S_u} \phi^{\omega}_{\text{hom}}(u^+,u^-,\nu_u)\,\mathrm{d}\Hk. 
\end{equation*}
The energy density is given by an asymptotic homogenization formula which is averaged in the probability space under ergodicity assumptions on $\mathcal{L}$, thus turning the stochastic domain wall energy into a deterministic one.\\

The result is proved by the abstract methods of Gamma-convergence, first showing an abstract compactness result, and then giving an integral representation of the limit, as described in detail for deterministic bulk elastic thin films in \cite{BFF} (for other applications of this method in a discrete-to-continuum setting see e.g. \cite{AC,LDR2,BC}). The proof makes use of two main ingredients: the integral-representation theorem in \cite{BFLM} and the subadditive ergodic theorem by Ackoglu and Krengel in \cite{AkKr}. They are combined together following a scheme introduced in \cite{ACG2} in the context of random discrete systems with limit energy on Sobolev spaces (see also \cite{DMM}) and recently extended to sets of finite perimeter in \cite{ACR}. Section \ref{sect-vol} is devoted to extend the result above to the case of a volume constraint on the phases.  \\

An interesting issue in the theory of thin magnetic composite polymeric materials is the dependence of the domain wall energy on the random geometry of the polymer matrix. We devote the second part of the paper to this problem. We consider a specific model of a discrete system in which the state-space is $\S=\{\pm 1\}$ and the stochastic lattice is generated by the random deposition of magnetic particles on a two-dimensional flat substrate. For simplicity we limit ourselves to a simple deposition model with vertical order and suppose that the magnetic interactions have finite range. We are interested in the dependence of the domain wall energy on the average thickness of the thin film. Even though a complete picture would need a more extended treatment, thanks to percolation arguments we are able to attack the problem in the asymptotic cases when the thickness of the film is either small or very large. 

More specifically, we model the substrate (where the particles are deposited) by taking a two-dimensional deterministic lattice, which we choose for simplicity as $\mathcal{L}^0=\mathbb{Z}^2\times\{0\}$. We then consider an independent random field $\{X_i^p\}_{i\in\mathbb{Z}^3}$, where the $X_i^p$ are Bernoulli random variables with $\mathbb{P}(X_i^p=1)=p\in(0,1)$. For fixed $M\in\mathbb{N}$ we construct a random point set as follows:
\begin{equation*}
\mathcal{L}_p^M(\w):=\left\{(i_1,i_2,i_3)\in\mathbb{Z}^3:\;0\leq i_3\leq\sum_{k=1}^MX_{(i_1,i_2,k)}^p(\w)\right\},
\end{equation*}
which means that we successively deposit particles $M$ times independently onto the flat lattice $\mathcal{L}^0$ and stack them over each other (the point set constructed is stationary with respect to translations in $\mathbb{Z}^2$ and ergodic). Moreover, given $u:\e\mathcal{L}_p^M(\w)\to \{\pm 1\}$, we consider an energy of the form
\begin{equation*}
E^p_{\e,M}(\w)(u,A)=\sum_{\substack{x,y\in\mathcal{L}_p^M(\w)\\ \e P_2(x),\e P_2(y)\in A}}\e \,c(x-y)|u(\e x)-u(\e y)|,
\end{equation*}
where the interaction constant $c:\R^3\to [0,+\infty)$ is finite range, bounded above and coercive on nearest-neighbours, so that the Hypotheses 1 and 2 above are satisfied. As a result Theorem \ref{mainthm2} guarantees the existence of a surface tension, say $\phi^p_{\text{hom}}(M;\nu)$ given by an asymptotic cell 
formula. \\

The main issue now is the dependence of $\phi^p_{\text{hom}}(M;\nu)$ on $p$ and $M$. \\

A first result in this direction is proved in Proposition \ref{layerdependence} where we show that, for every direction $\nu\in S^{1}$, the wall energy density is linear in the average thickness $pM$ as $M\to +\infty$, that is 
\begin{equation}\label{intro-average}
\lim_{M\to +\infty}\frac{\phi^p_{\text{hom}}(M;\nu)}{pM}=\phi^{1}(\nu),
\end{equation}
with $\phi^{1}(\nu)$ given in Lemma \ref{auxlemma} being the wall energy per unit thickness of the deterministic problem obtained for $p=1$. \\ 
A second and more delicate result is contained in Theorem \ref{almostpercolation} and concerns a percolation type phenomenon which can be roughly stated as follows: When the deposition probability $p$ is sufficiently low (below a certain critical percolation threshold) the domain wall energy is zero for $M$ small enough. At this stage it is worth noticing that our energy accounts for the interactions between the deposited particles and the substrate. On one hand this assumption might be questionable from a physical point of view in the case one assumes to grow thin films on neutral media, thus expecting the properties of the film to be independent of the substrate. On the other hand removing such an interaction leads to a dilute model similar the one considered in \cite{Dilute}. An adaption of this analysis would require a lot of additional work like the extension of fine percolation results to the (range 1)-dependent case which goes far beyond the scopes of the present paper (see also Remark \ref{perc}). We prove the percolation result for nearest-neighbour positive interactions. Setting the interaction with the substrate to be $\eta>0$ we can prove that if $p<1-p_{site}$ (here $p_{site}$ is the critical site percolation threshold in $\mathbb{Z}^2$), the limit energy $\phi^{p,\eta}_{\text{hom}}(M;\nu)$ is bounded above (up to a constant) by $\eta$ for $M$ small enough. This result suggests the absence of a positive domain wall energy in the thin film on a neutral substrate ($\eta=0$ case). In the limit as $M$ diverges \eqref{intro-average} holds with $\phi^{p,\eta}_{\text{hom}}(M;\nu)$, which is independent of $\eta$, thus showing that the contribution of the first layer does not affect the asymptotic average domain wall energy as expected. The proof of these results needs the extension to the dimension reduction framework of a result by Caffarelli-de la Lave \cite{CadlF} about the existence of plane-like minimizers for discrete systems subject to periodic Ising type interactions at the surface scaling. This is contained in the appendix to the paper.

As a final remark, we mention that we prove all our results in the case when the flat object is at least two-dimensional. Most of the results can be extended to one-dimensional objects (with the proof being much simpler), except the ones contained in Section \ref{sect-vol} which fail in dimension one as can be seen by simple examples and the percolation-type phenomenon in Section \ref{sec-example} as no percolation can occur in (essentially) one-dimensional lattices.

\section{Modeling discrete disordered thin sets and spin systems}\label{sec:model}
This section is devoted to the precise description of the model we are going to study. We start with the notation we are going to use in the sequel.
\\
\hspace*{0,5cm}
As we are concerned with dimension-reduction issues, there will be two geometric dimensions $k$ and $d$ with $2\leq k<d$. Given a measurable set $A\subset\R^k$ we denote by $|A|$ its $k$-dimensional Lebesgue measure, while more generally $\mathcal{H}^{m}(A)$ stands for the $m$-dimensional Hausdorff measure. We denote by $\mathds{1}_A$ the {\it characteristic function} of $A$. Given $x\in\R^k$ and $r>0$, $B_r(x)$ is the open ball around $x$ with radius $r$. By $|x|$ we denote the usual euclidean norm of $x$. Moreover, we set $\text{d}_{\mathcal{H}}(A,B)$ the Hausdorff distance between the sets $A$ and $B$ and $\text{dim}_{\mathcal{H}}(A)$ the Hausdorff dimension of $A$. If it is clear from the context we will use the same notation as above also in $\R^d$ (otherwise we will indicate the dimension by sub/superscript indices). Given an open set $D\subset\R^k$ we denote by $\mathcal{A}(D)$ the family of all bounded open subsets of $D$ and by $\Ard$ the family of those sets in $\mathcal{A}(D)$ with Lipschitz boundary. Given a unit vector $\nu\in S^{k-1}$, let $\nu=\nu_1,\dots,\nu_k$ be a orthonormal basis. We define the open cube in $\R^k$
\begin{equation*}
Q_{\nu}=\Bigl\{x\in\R^k:\;|\langle x,\nu_i\rangle|<\frac{1}{2}\quad\hbox{ for all }i\Bigr\},
\end{equation*}
and, for $x\in\R^k,\rho>0$, we set $Q_{\nu}(x,\rho):=x+\rho\, Q_{\nu}$. We call $\nu\in S^{k-1}$ a rational direction if $\nu\in \Q^{k}$. We denote by $P_k:\R^d\to\R^k$ the projection onto $\R^k$. 

For $q\in\mathbb{N}$ we let $BV(D,\R^{q})$ be the space of $\R^{q}$-valued {\it functions of bounded variation}; that are, those functions $u\in L^1(D,\R^{q})$ such that their distributional derivate $Du$ is a matrix-valued Radon measure. Given a set $\S\subset\R^{q}$, we denote by $BV(A,\S)$ the space of those functions $u\in BV(A,\R^{q})$ such that $u(x)\in \S$ almost everywhere. If $S$ is a finite set, then the distributional derivative of $u$ can be represented on any Borel set $B\subset D$ as $Du(B)=\int_{B\cap S_{u}}(u^{+}(x)-u^{-}(x))\otimes\nu_{u}(x)\,\mathrm{d}\mathcal{H}^{k-1}(x)$, for a countably $\mathcal{H}^{k-1}$-rectifiable set $S_{u}$ in $D$ which coincides $\mathcal{H}^{k-1}$-almost everywhere with the complement in $D$ of the Lebesgue points of $u$. Moreover $\nu_{u}(x)$ is a unit normal to $S_{u}$, defined for $\mathcal{H}^{k-1}$-almost every $x$ and $u^{+}(x),\, u^{-}(x)$ are the traces of $u$ on both sides of $S_{u}$. Here the symbol $\otimes$ stands for the tensorial product of vectors, that is for any $a,b\in\R^{k}$ $(a\otimes b)_{ij}:=a_{i}b_{j}$. A measurable set $B$ is said to have finite perimeter in $D$ if its characteristic function belongs to $BV(D)$. We refer the reader to \cite{AFP} for an introduction to functions of bounded variation. The letter $C$ stands for a generic positive constant that may change every time it appears.
\\
\hspace*{0,5cm}
We want to describe (possibly non-periodic) particle systems, where the particles themselves are located very close to a lower-dimensional linear subspace. To this end we make the following assumptions: Let $\mathcal{L}\subset\R^d$ be a countable set. We assume that there exists $M>0$ such that, after identifying $\R^k\sim\R^k\times\{0\}^{d-k}$, we have
\begin{equation}\label{thickness}
\dist(x,\R^k)\leq M\quad\hbox{ for all } x\in\mathcal{L}.
\end{equation} 
Moreover, adapting ideas from \cite{ACG2,ACR,BLBL} we assume that the point set is regular in the following sense:

\begin{definition}\label{flatadmissible}
A countable set $\mathcal{L}\subset\R^d$ is a {\em thin admissible lattice} if (\ref{thickness}) holds and

\begin{itemize}
\item[(i)] there exists $r>0$ such that $|x-y|\geq r$ for all $x\neq y,\,x,y\in\mathcal{L}$,
\item[(ii)]  there exists $R>0$ such that $\dist(x,\mathcal{L})\leq R$ for all $x\in\R^d$.
\end{itemize}
\end{definition}

We associate to such a lattice a truncated Voronoi tessellation $\mathcal{V}(\mathcal{L})$, where the corresponding $d$-dimensional cells $\mathcal{C}\in\mathcal{V}(\mathcal{L})$ are defined by
\begin{equation*}
\mathcal{C}(x):=\{z\in\R^k\times[-2M,2M]^{d-k}:\;|z-x|\leq |z-x^{\prime}|\hbox{ for all } x^{\prime}\in\mathcal{L}\},
\end{equation*}
and we introduce the set of nearest neighbours accordingly by setting
\begin{equation*}
\NNS:=\{(x,y)\in\L^2:\;\text{dim}_{\mathcal{H}}(\mathcal{C}(x)\cap \mathcal{C}(y))=d-1\}.
\end{equation*}
As usual in the passage from atomistic to continuum theories we scale the point set $\L$ by a small parameter $\e>0$. We assume that the magnetization of the particles takes values in a finite set $\S=\{s_1,\dots,s_q\}\subset\R^{q}$. Fix a $k$-dimensional reference set $D\in\mathcal{A}^R(\R^k)$. Given $A\in\Ard$ and $u:\e\L\to\S$, we consider a localized (on $A$) pairwise interaction energy 
\begin{equation*}
E_{\e}(u,A)=\sum_{\substack{\e x,\e y\in P_k^{-1}A}}\e^{k-1} f_{\e}(x,y,u(\e x),u(\e y)),
\end{equation*}
where the ($\L$-dependent) interactions distinguish between long and short-range interactions and are of the form
\begin{equation*}
f_{\e}(x,y,s_i,s_j)=\begin{cases}
f^{\e}_{nn}(x,y,s_i,s_j) &\mbox{if $(x,y)\in\mathcal{N\!N}(\L)$,}\\
f^{\e}_{lr}(x,y,s_i,s_j) &\mbox{otherwise.}
\end{cases}
\end{equation*}
For our analysis we make the following assumptions on the measurable functions $f_{nn}^{\e},f_{lr}^{\e}:\R^d\times\R^d\times \S^2\to [0,+\infty)$:\\

\medskip
\noindent \textbf{Hypothesis 1 }There exist $c>0$ and a decreasing function $J_{lr}:[0,+\infty)\rightarrow[0,+\infty)$ with 
\begin{equation*}
\int_{\R^k}J_{lr}(|x|)|x|\dx =J<+\infty
\end{equation*}
such that, for all $\e>0$, $x,y\in\R^d$ and $s_i,s_j\in\S$,
\begin{equation*}
c\leq c_{nn}^{\e}(x,y)\leq J_{lr}(|x-y|),  \quad c_{lr}^{\e}(x,y)\leq J_{lr}(|x-y|). 
\end{equation*}
\hspace*{0,5cm}
Since the sets $\e\L$ shrink to a $k$-dimensional set as $\e$ vanishes, we want to define a convergence of discrete variables on shrinking domains.To that end, denoting by ${\rm co}(\S)$ the convex hull of $\S$, we define the averaged and projected spin variable $Pu:\e P_k\L\to {\rm co}(\S)$ via
\begin{equation*}
Pu(\e z):=\frac{1}{\#\left(P_k^{-1}(z)\cap\L\right)}\sum_{x\in P_k^{-1}(z)\cap\L}u(\e x).
\end{equation*}
The projected lattice $P_k\L\subset\R^k$ inherits property (ii) from Definition \ref{flatadmissible}, but (i) might fail after projection. Nevertheless, due to (\ref{thickness}) the projected lattice is still locally finite and the following uniform bound on the number of points holds true: there exists a constant $C=C_{\L}>0$ such that, given a set $A\in\mathcal{A}(D)$ with $|\partial A|=0$, we have
\begin{equation}\label{localratio}
\e^k\#\{\e z\in \e P_k\L\cap A\}\leq C|A|
\end{equation}
for $\e$ small enough. 
We now associate the corresponding $k$-dimensional Voronoi tessellation $\mathcal{V}(P_k\L)=\{\mathcal{C}_k(z)\}$ in $\R^k$ to the lattice $ P_k\L$ and we identify $Pu$ with a piecewise-constant function belonging to the class 
\begin{equation*}
\mathcal{PC}_{\e}(\L):=\{v:\R^k\to {\rm co}(\S):\;v_{|\e\mathcal{C}_k(z)}\text{ is constant for all } z\in P_k\L\}
\end{equation*}
\begin{figure}
\centerline{\includegraphics [width=5in]{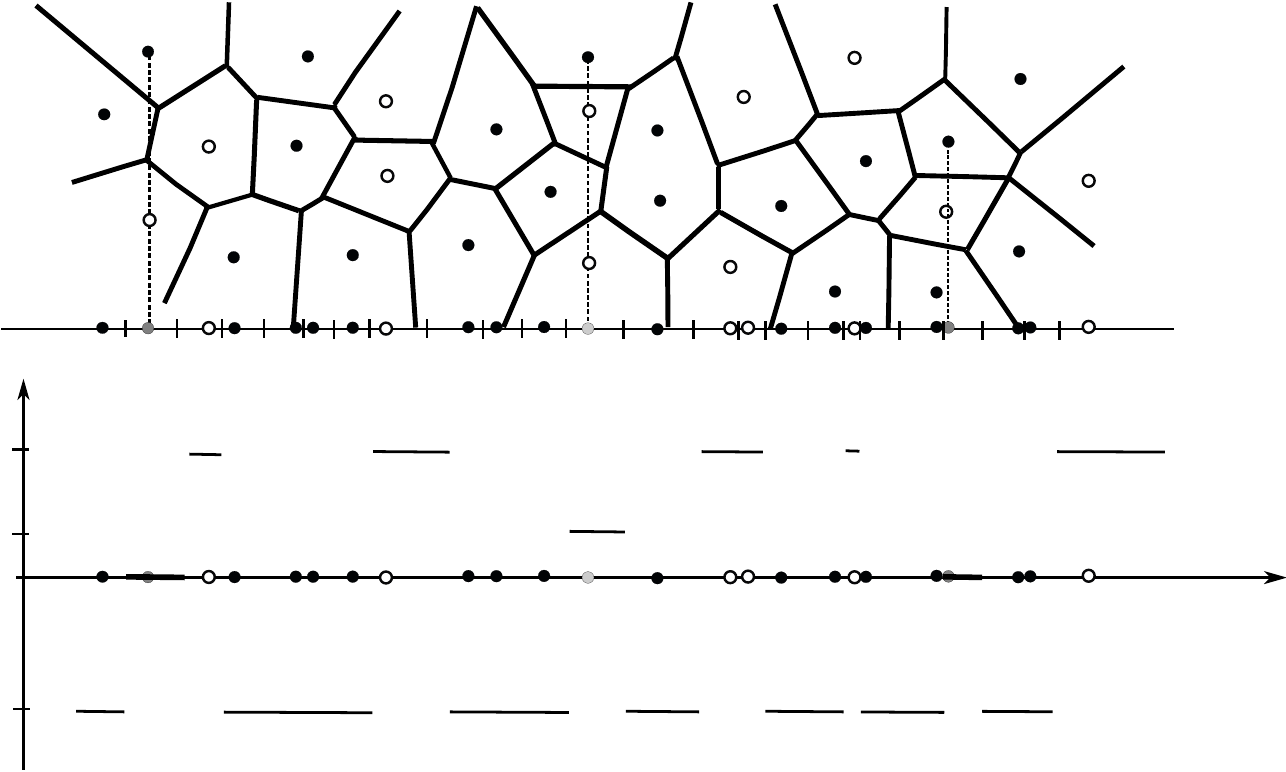}}
\caption{Construction of the piecewise-constant interpolation for $d=2$, $k=1$ and $\S=\{\pm 1\}$.} \label{voronoi}
\begin{picture}(0,0)
\put(-188,59){-1}\put(-188,94){0}\put(-195,107){1/3}\put(-188,132){1}
\end{picture}
\end{figure}
Note that we can embed $\mathcal{PC}_{\e}(\L)$ in $L^1(D)$ since the intersection of two Voronoi cells always has zero $k$-dimensional Lebesgue measure. 

For the sake of illustration, in Figure \ref{voronoi} we picture the construction in the simple case $d=2$, $k=1$ and $\S=\{\pm 1\}$. In the picture above, we draw a portion of the truncated Voronoi diagram of the lattice $\L$ represented by the dots, black for $u=-1$ and white for $u=+1$. At the bottom of the Voronoi diagram we include the projected points $P_1\L$ and the values of the variable $Pu\in[-1,1]$ (range reflected by the grey scale in the figure). The dashed lines indicate the exceptional set of projection points where $|Pu|\neq 1$. In the picture below, it is represented the piecewise-constant function on the Voronoi intervals subordinated to $P_1\L$. 

To deal with convergence of sequences $u_{\e}:\e\L\to\S$, we adopt the idea of \cite{BrCaSo}. We will see in Section \ref{sect-vol} that this notion of convergence is indeed meaningful for variational problems in a random environment.

\begin{definition}\label{defconv}
Let $A\in\mathcal{A}(D)$. We say that a sequence $u_{\e}:\e\L\to\S$ {\em converges} in $A$ to $u:A\to\R^q$ if the piecewise-constant functions $Pu_{\e}$ converge to $u$ strongly in $L^1(A)$.
\end{definition}

\hspace*{0,5cm}
For our variational analysis we also introduce the lower and upper $\Gamma$-limits $E^{\prime},E^{\prime\prime}:L^1(D,\R^q)\times\Ard\to [0,+\infty]$ setting
\begin{align*}
E^{\prime}(u,A)&:=\inf\Bigl\{\liminf_{\e\to 0}E_{\e}(u_{\e},A):\;u_{\e}\to u \text{ in }D\Bigr\},\\
E^{\prime\prime}(u,A)&:=\inf\Bigl\{\limsup_{\e\to 0}E_{\e}(u_{\e},A):\;u_{\e}\to u\text{ in }D\Bigr\}.
\end{align*}

\begin{remark}\label{almostgamma}
The functionals $E^{\prime},E^{\prime\prime}$ are not $\Gamma$-lower/upper limits in the usual sense since they are not defined on the same space as $E_{\e}$. However, if we define the functionals $\tilde{E}_{\e}:L^1(D,\R^q)\times\Ard\to [0,+\infty]$ as
\begin{equation*}
\tilde{E}_{\e}(u,A):=
\begin{cases}
\inf_{v}E_{\e}(v,A)&\mbox{if $u=Pv$ for some $v:\e\L\to\S$,}\\
+\infty&\mbox{otherwise,}
\end{cases}
\end{equation*} 
then $E^{\prime},E^{\prime\prime}$ agree with the $\Gamma$-lower/upper limit of $\tilde{E_{\e}}$ in the strong $L^1(D)$-topology. Therefore we will refer to the equality of $E^{\prime}$ and $E^{\prime\prime}$ as $\Gamma$-convergence. Moreover, one can show that
\begin{align*}
E^{\prime}(u,A)&=\inf\Bigl\{\liminf_{\e\to 0}E_{\e}(u_{\e},A):\;u_{\e}\to u \text{ in }A\Bigr\},\\
E^{\prime\prime}(u,A)&=\inf\Bigl\{\limsup_{\e\to 0}E_{\e}(u_{\e},A):\;u_{\e}\to u\text{ in }A\Bigr\}.
\end{align*}
By the properties of $\Gamma$-convergence this implies that both functionals $u\mapsto E^{\prime}(u,A)$ and $u\mapsto E^{\prime\prime}(u,A)$ are $L^1(A)$-lower semicontinuous and hence local in the sense of Theorem \ref{represent} (ii).
\end{remark}

\hspace*{0,5cm}
We now prove several properties of the convergence introduced in Definition \ref{defconv}. We start with an equi-coercivity property.

\begin{lemma}\label{compactness}
Assume Hypothesis 1 holds. Let $A\in\mathcal{A}(D)$ and let $u_{\e}:\e\L\to\S$ be such that
\begin{equation*}
\sup_{\e}E_{\e}(u_{\e},A)<+\infty.
\end{equation*}
Then, up to subsequences, the functions $Pu_{\e}$ converge strongly in $L^1(A)$ to some $u\in BV(A,\S)$.
\end{lemma}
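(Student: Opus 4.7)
The plan is to derive a uniform $BV$-bound for the piecewise constant interpolations $Pu_\e$ and then invoke the standard compactness theorem in $BV$. Since $\mathrm{co}(\S)$ is bounded, $\|Pu_\e\|_{L^\infty(A)}$ is automatically controlled, so the task reduces to bounding the total variation $|DPu_\e|(A')$ uniformly on every $A'\Subset A$.

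From Hypothesis 1 the lower bound on $f^\e_{nn}$ gives
\[
c\sum_{\substack{(x,y)\in\NNS\\ \e P_kx,\e P_ky\in A}}\e^{k-1}|u_\e(\e x)-u_\e(\e y)|\leq E_\e(u_\e,A)\leq C.
\]
Since $Pu_\e$ is constant on each rescaled Voronoi cell $\e\mathcal{C}_k(z)$, its distributional gradient is concentrated on the shared faces of adjacent cells. The projected lattice $P_k\L$ inherits the covering property (ii), and combined with \eqref{thickness} and (i) this forces its Voronoi cells to have uniformly bounded diameter and $(k-1)$-dimensional face area; hence
\[
|DPu_\e|(A')\leq C_1\sum_{\substack{z,z'\text{ adjacent in }P_k\L\\ \e z,\e z'\in A}}\e^{k-1}\bigl|Pu_\e(\e z)-Pu_\e(\e z')\bigr|.
\]
For any adjacent pair $z,z'$ the columns $P_k^{-1}(z)\cap\L$ and $P_k^{-1}(z')\cap\L$ have uniformly bounded cardinality and lie pairwise at uniformly bounded Euclidean distance (by \eqref{thickness} and (ii)), so by (i) every pair of column points can be joined by a chain in $\NNS$ of uniformly bounded length. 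Expanding each jump $|Pu_\e(\e z)-Pu_\e(\e z')|$ as an average of differences $|u_\e(\e\tilde x)-u_\e(\e\tilde y)|$ telescoped along such chains and summing over all adjacent pairs, each individual edge of $\NNS$ is charged only a uniformly bounded number of times, so that
\[
|DPu_\e|(A')\leq \frac{C_2}{c}E_\e(u_\e,A)\leq C_3.
\]

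The Rellich-type compactness theorem for $BV$ then yields a subsequence with $Pu_\e\to u$ in $L^1(A')$ and $u\in BV(A',\R^q)$; a diagonal argument along an exhaustion $A_n\Subset A$ promotes the convergence to $L^1(A)$. To conclude that $u\in\S$ almost everywhere, observe that whenever $Pu_\e\notin\S$ on a cell $\e\mathcal{C}_k(z)$ the column above $z$ must contain two points with different spin values, joined inside $\L$ by a bounded nearest-neighbour chain. At least one edge of this chain is a frustrated pair contributing at least $c\,\alpha\,\e^{k-1}$ to $E_\e$, with $\alpha:=\min_{s_i\neq s_j}|s_i-s_j|>0$. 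The number of such ``mixed'' cells is therefore $O(\e^{1-k})$ and they cover a $k$-dimensional set of measure $O(\e)\to 0$, so the $L^1$-limit $u$ takes values in $\S$ a.e.

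The main obstacle is the combinatorial step of organising the nearest-neighbour chains so that each edge of $\NNS$ appears only a uniformly bounded number of times in the telescoping sums; only then does the $BV$-bound depend on $E_\e$ with a constant independent of $\e$. It is at this point that the three constitutive properties of a thin admissible lattice — minimum distance (i), covering (ii) and thickness \eqref{thickness} — enter simultaneously.
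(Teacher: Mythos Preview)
Your proof follows essentially the same route as the paper's: bound $|DPu_\e|(A')$ via nearest-neighbour chains connecting column points, control the multiplicity with which each edge is used, apply $BV$ compactness, and show the mixed-value cells have vanishing measure. The one technical point the paper makes explicit and you leave asserted is the actual construction of bounded-length nearest-neighbour paths between two given lattice points: the paper perturbs the segment $[x_1,x_2]$ to a generic nearby segment that avoids Voronoi faces of dimension $<d-1$, so that consecutive cells along it are genuine nearest neighbours in $\NNS$; your citation of (i) alone does not quite justify this step.
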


\begin{proof}
Fix $A^{\prime}\subset\subset A$ such that $A^{\prime}\in\Ard$. We start by estimating the measure of the set $\{Pu_{\e}\notin\S\}\cap A^{\prime}$. Note that if $Pu_{\e}(\e z)\notin\S$ for some $z\in P_k\L$ such that $\e\,\mathcal{C}_k(z)\cap A^{\prime}\neq\emptyset$, then there exist $x_1,x_2\in P_k^{-1}(z)\cap\L$ such that $u_{\e}(\e x_1)\neq u_{\e}(\e x_2)$. As a preliminary step we show that we can find a path of nearest neighbours in $\L$ joining $x_1$ and $x_2$; that is, a finite collection of points $\{x^1,\dots,x^m\}\subset\L$ such that $x^1=x_1$ and $x^m=x_2$ and $(x^i,x^{i+1})\in\NNS$ for all $i=1,...,m-1$.  Moreover this path will be chosen such that it does not vary too much from the segment between $x_1$ and $x_2$. To this end, fix $0<\delta<<1$ and consider the collection of segments
\begin{equation}\label{def-ray}
\mathcal{G}_{\delta}(x_1,x_2)=\{x+\lambda (x_2-x_1):\;x\in B_{\delta}(x_1),\,0\leq\lambda\leq 1\}.
\end{equation}
We argue that there exists a segment $g^*=\{x^*+\lambda(x_2-x_1):0\leq\lambda\leq 1\}\subset \mathcal{G}_{\delta}$ satisfying the following implication:
\begin{equation}\label{onlynncells}
g^*\cap \mathcal{C}(x)\cap\mathcal{C}(x^{\prime})\neq\emptyset\quad \Rightarrow\quad (x,x^{\prime})\in\NNS. 
\end{equation}
Indeed, assume by contradiction that the implication is false for all $x^*\in B_{\delta}(x_1)$. Since the number of $d$-dimensional Voronoi cells $\mathcal{C}(x)\in\mathcal{V}(\L)$ such that $\mathcal{C}(x)\cap \mathcal{G}_{\delta}\neq\emptyset$ is uniformly bounded, we could then find finitely many Voronoi facets of dimension less than $d-1$ whose projection onto the hyperplane containing $x_1$ and orthogonal to $x_2-x_1$ covers a $d-1$-dimensional set. Since projections onto hyperplanes are Lipschitz continuous, we obtain a contradiction.
\\
\hspace*{0.5cm}
The path connecting $x_1$ and $x_2$ is then given by the set $G(x_1,x_2):=\{x\in\L:\;g^*\cap \mathcal{C}(x)\neq\emptyset\}$, provided that $\delta$ is small enough. Observe that there exist $x,y\in G(x_1,x_2)$ such that $(x,y)\in\NNS$ and $u_{\e}(\e x)\neq u_{\e}(\e y)$. From the coercivity assumption in Hypothesis 1, we thus deduce that each path contributes to the energy. Moreover, by (\ref{thickness}) and the local construction of the paths, for any pair $(x,y)\in\NNS$ it holds that
\begin{equation*}
\#\{z\in P_k\L:\;G(x_1,x_2)\cap \{x,y\}\neq \emptyset\}\leq C.
\end{equation*} 
From these two facts we infer that
\begin{equation}\label{numberest}
\e^{k-1}\#\{\e z:\;\e\mathcal{C}_k(z)\cap A^{\prime}\neq\emptyset,\,Pu_{\e}(\e z)\notin\S,\}\leq CE_{\e}(u_{\e},A)\leq C,
\end{equation} 
where we have used that $\e G(x_1,x_2)\subset (P_k^{-1}A)\cap\e\L$ for $\e$ small enough. Since the measure of a Voronoi cell in $P_k\L$ can be bounded uniformly by a constant, by rescaling we deduce that
\begin{equation}\label{only1}
\left|\{Pu_{\e}\notin\S\}\cap A^{\prime}\right|\leq C\e.
\end{equation}
We continue bounding the total variation $|DPu_{\e}|(A^{\prime})$. Since $Pu_{\e}$ is equibounded and piecewise constant, it is enough to provide a bound for $\Hk(S_{Pu_{\e}}\cap A^{\prime})$. Note that the jump set $S_{Pu_{\e}}$ is contained in the facets of the Voronoi cells of the lattice $\e P_k\L$. Since $\L$ is thin admissible in the sense of Definition \ref{flatadmissible} and property (ii) is preserved by projection, for each such facet $F$ it holds that
\begin{equation*}
\Hk(F)\leq C\e^{k-1}.
\end{equation*}
For $\e$ small enough, we conclude that
\begin{align*}
\Hk(S_{Pu_{\e}}\cap A^{\prime})\leq C\e^{k-1}\#\{(z,z^{\prime})\in\mathcal{N\!N}(P_k\L):\;Pu_{\e}(\e z)\neq Pu_{\e}(\e z^{\prime}),\,\e z,\e z^{\prime}\in A^{\prime}+B_{R\e}(0)\}.
\end{align*}
Given $\e z,\e z^{\prime}\in A^{\prime}+B_{R\e}(0)$ such that $(z,z^{\prime})\in\mathcal{N\!N}(P_k\L)$ and $Pu_{\e}(\e z)\neq Pu_{\e}(\e z^{\prime})$, again we may find a path of nearest neighbours $G(z,z^{\prime})=\{x^0\in P_k^{-1}(z),x^1,...,x^m\in P_k^{-1}(z^{\prime})\}$ with $u_{\e}(\e x^0)\neq u_{\e}(\e x^m)$ and the paths are local in the sense that 
\begin{equation*}
\#\{(z,z^{\prime})\in\mathcal{N\!N}(P_k\L):\;G(z,z^{\prime})\cap\{x,y\}\neq\emptyset\}\leq C
\end{equation*}
for all $(x,y)\in\NNS$. Reasoning as in the first part of the proof we find that
\begin{align*}
\e^{k-1}\#\{(z,z^{\prime})\in\mathcal{N\!N}(P_k\L):\;Pu_{\e}(\e z)\neq Pu_{\e}(\e z^{\prime}),\,\e z,\e z^{\prime}\in A^{\prime}+B_{R\e}(0)\}&\leq CE_{\e}(u_{\e},A)\leq C.
\end{align*}
By well-known compactness properties of $BV$-functions (see for example \cite[Corollary 3.49]{AFP}) and (\ref{only1}), there exists a subsequence (not relabeled) such that $Pu_{\e}\to u$ in $L^1(A^{\prime})$ for some $u\in BV(A^{\prime},\S)$. Since $A^{\prime}$ was arbitrary, the claim follows by a diagonal argument combined with equiboundedness which rules out concentrations close to the boundary.
\end{proof}

We will also use the following auxiliary result about the convergence introduced in Definition \ref{defconv}.

\begin{lemma}\label{sumconv}
Let $A\in\mathcal{A}(D)$ be such that $|\partial A|=0$ and let $u_{\e},v_{\e}:\e\L\to \S$ both converge in $A$ to $u$ in the sense of Definition \ref{defconv} and assume both have equibounded energy on $A$. Then
\begin{equation*}
\lim_{\e\to 0}\sum_{\substack{\e x\in\e\L \\ \e P_k(x)\in A}}\e^k|u_{\e}(\e x)-v_{\e}(\e x)|=0.
\end{equation*}
\end{lemma}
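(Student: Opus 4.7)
The strategy is to lift the $\e^k$-weighted sum to a $d$-dimensional integral over a thin slab of thickness $O(\e)$, and then exploit the $L^1$-convergence of the projected variables together with the energy bound. Define piecewise constant functions $\hat u_\e, \hat v_\e:\R^d\to\S$ by $\hat u_\e\equiv u_\e(\e x)$ (resp.\ $\hat v_\e\equiv v_\e(\e x)$) on the scaled $d$-dimensional Voronoi cell $\e\mathcal{C}(x)$. Property (i) of Definition \ref{flatadmissible} gives $|\mathcal{C}(x)|_d\geq c_0>0$ uniformly in $x$, hence for $x$ whose projection $\e P_k x$ lies in the deep interior $A_\e^{\mathrm{in}}:=\{y'\in A:\dist(y',\partial A)>2R\e\}$ the whole cell $\e\mathcal{C}(x)$ sits inside $P_k^{-1}(A)\cap\{|y''|\leq 2M\e\}$, yielding
\begin{equation*}
\int_{P_k^{-1}(A)\cap\{|y''|\leq 2M\e\}}|\hat u_\e-\hat v_\e|(y)\,dy\;\geq\;c_0\,\e^d\sum_{\e x:\,\e P_kx\in A_\e^{\mathrm{in}}}|u_\e(\e x)-v_\e(\e x)|.
\end{equation*}
The missing boundary contribution is negligible: by (\ref{localratio}) together with the column bound $\#(P_k^{-1}(z)\cap\L)\leq N_{\max}$ (a consequence of (\ref{thickness}) and (i)), and since $|A\setminus A_\e^{\mathrm{in}}|\to 0$ thanks to $|\partial A|=0$, one has $\e^k\sum_{\e P_kx\in A\setminus A_\e^{\mathrm{in}}}|u_\e-v_\e|(\e x)\leq C|A\setminus A_\e^{\mathrm{in}}|\to 0$.

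It therefore remains to prove $\e^{-(d-k)}\int_{P_k^{-1}(A)\cap\{|y''|\leq 2M\e\}}|\hat u_\e-\hat v_\e|\,dy\to 0$. Writing $y=(y',y'')\in\R^k\times\R^{d-k}$ and changing variables $y''=\e t''$, Fubini gives
\begin{equation*}
\e^{-(d-k)}\int_{P_k^{-1}(A)\cap\{|y''|\leq 2M\e\}}|\hat u_\e-\hat v_\e|\,dy\;=\;\int_{[-2M,2M]^{d-k}}\|\hat u_\e(\cdot,\e t'')-\hat v_\e(\cdot,\e t'')\|_{L^1(A)}\,dt''.
\end{equation*}
The integrand is bounded uniformly by $2\|\S\|_\infty|A|$, so by dominated convergence it is enough to show, for each fixed $t''\in[-2M,2M]^{d-k}$, that $\hat u_\e(\cdot,\e t'')\to u$ and $\hat v_\e(\cdot,\e t'')\to u$ in $L^1(A)$. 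Since $Pu_\e,Pv_\e\to u$ in $L^1(A)$ by hypothesis, the slice claim reduces to $\|\hat u_\e(\cdot,\e t'')-Pu_\e\|_{L^1(A)}\to 0$ (and analogously for $v_\e$).

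The slice $\hat u_\e(\cdot,\e t'')$ and the projection $Pu_\e$ can disagree only on the union of two sets. First, on cells $\e\mathcal{C}_k(z)$ above \emph{bad} columns where $u_\e$ is non-constant on $P_k^{-1}(z)\cap\L$: their total $k$-dimensional measure is $O(\e)$ by (\ref{only1}). Second, on \emph{cross-column} strips where $y'\in\e\mathcal{C}_k(z)$ (with column $z$ good) but the nearest lattice point to $(y'/\e,t'')$ lies in a different column $z'\neq z$ carrying a different value $u_\e(z')\neq u_\e(z)$. An elementary distance comparison using (\ref{thickness}) shows that the switch forces $|y'/\e-z'|^2-|y'/\e-z|^2\leq 4M^2$, confining $y'/\e$ to a bounded-width strip around the $(k-1)$-dimensional facet between $\mathcal{C}_k(z)$ and $\mathcal{C}_k(z')$; applying the argument of Lemma~\ref{compactness} (cf.~(\ref{numberest})) to $u_\e$ then bounds the number of such \emph{relevant} facets (those separating distinct column values) by $C\e^{-(k-1)}$, so their $\e$-thickened $k$-measure totals $O(\e)$. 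Summing both contributions yields the slice claim, and the main technical obstacle is precisely this cross-column estimate: one must show that the $d$-dimensional nearest-neighbour ambiguity between columns is localized to an $O(\e)$ strip around the projected Voronoi facets, where the energy bound forces interfaces between different values of $u_\e$ to be rare.
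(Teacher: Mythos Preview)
Your approach is genuinely different from the paper's. The paper never leaves $k$ dimensions: after replacing $u_\e(\e x)$ by $Pu_\e(\e P_k x)$ on all but $O(\e^{1-k})$ points (via the energy bound, as in (\ref{numberest})), it must turn the sum $\sum_{z}\e^k|Pu_\e(\e z)-Pv_\e(\e z)|$ into an $L^1$-integral. The obstacle is exactly that the projected Voronoi cells $\e\mathcal{C}_k(z)$ can be degenerate, so the paper introduces an auxiliary coarse cubic lattice $L\mathbb{Z}^k$ (with $L$ large enough that every coarse cube meets $P_k\L$), separates the few cubes on which $Pu_\e$ or $Pv_\e$ is non-constant (controlled by the energy), and on the remaining cubes dominates the sum by the $L^1$-norm. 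Your lifting-and-slicing strategy sidesteps the degeneracy by working with the $d$-dimensional Voronoi cells, which are uniformly non-degenerate by property (i), and then slicing back down.

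The one place your argument is not correct as written is the cross-column estimate. The slab $\{w:0\le |w-z'|^2-|w-z|^2\le 4M^2\}$ has width of order $M^2/|z-z'|$, which is \emph{not} uniformly bounded because property (i) can fail for $P_k\L$; moreover $z$ and $z'$ need not be $k$-Voronoi neighbours at all, so there may be no facet between $\mathcal{C}_k(z)$ and $\mathcal{C}_k(z')$ to thicken. The fix is actually simpler than what you wrote: the cross-column set for the pair $(z,x')$ is contained in the slice $\{w\in\R^k:(w,t'')\in\mathcal{C}(x')\}$, whose $k$-dimensional measure is bounded uniformly in $x'$ and $t''$ by the bounded diameter of the $d$-dimensional Voronoi cells (a consequence of (ii) and (\ref{thickness})). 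The path argument of Lemma~\ref{compactness} then bounds the number of relevant pairs $(z,x')$ (those with $|z-P_kx'|\le C$ and $u_\e(\e x')$ different from the value on column $z$) by $O(\e^{1-k})$, and multiplying by the uniform slice bound and scaling by $\e^k$ gives total measure $O(\e)$. With this correction your proof goes through; it trades the paper's coarse-lattice bookkeeping for a comparison between the $k$- and $d$-dimensional Voronoi geometries.
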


\begin{proof}
Fix a set $A^{\prime}\subset\subset A$ such that $A^{\prime}\in\Ard$. By (\ref{localratio}) and equiboundedness of $u_{\e}$ and $v_{\e}$ it is enough to show that
\begin{equation*}
\lim_{\e\to 0}\sum_{\substack{\e x\in\e\L \\ \e P_k(x)\in A^{\prime}}}\e^k|u_{\e}(\e x)-v_{\e}(\e x)|=0.
\end{equation*}
Using the fact that $u_{\e},v_{\e}$ both have finite energy in $A$, we can argue as in the derivation of (\ref{numberest}) to show that
\begin{equation*}
\#\{\e x\in\e\L:\;\e P_k(x)\in A^{\prime},\,Pu_{\e}(\e P_k(x))\neq u_{\e}(\e x)\text{ or }Pv_{\e}(\e P_k(x))\neq v_{\e}(\e x)\}\leq C\e^{1-k}.
\end{equation*}
Inserting this estimate and using that $\L$ satisfies (\ref{thickness}) we obtain
\begin{align*}
\sum_{\substack{\e x\in\e\L \\ \e P_k(x)\in A^{\prime}}}\e^k|u_{\e}(\e x)-v_{\e}(\e x)|&\leq C\sum_{\substack{\e z\in \e P_k\L \\ \e z\in A^{\prime}}}\e^k|Pu_{\e}(\e z)-Pv_{\e}(\e z)|+C\e.
\end{align*}
Thus it is enough to control the last sum. Since the Voronoi cells in the projected lattice may become degenerate, we can only use bounds on the number of cells. To this end fix $L>1$ large enough such that, for all $z_L\in L\mathbb{Z}^k$, we have
\begin{equation}\label{largerlattice}
1\leq\#\left(\e P_k\L \cap (\e z_L+[0,L\e)^{k})\right)\leq C.
\end{equation}
Define $I_{\e}:=\{z_L\in L\mathbb{Z}^k:\;(\e z_L+[0,L\e)^k)\cap A^{\prime}\neq\emptyset\}$ and subdivide this set again as
\begin{align*}
I_{\e}^1&:=\{z_L\in I_{\e}:\;Pu_{\e}\text{ is not constant on }\e z_L+[0,L\e)^k\},\\
I_{\e}^2&:=\{z_L\in I_{\e}:\;Pv_{\e}\text{ is not constant on }\e z_L+[0,L\e)^k\},\\
I_{\e}^3&:=I_{\e}\backslash(I_{\e}^1\cup I_{\e}^2).
\end{align*}
Since every scaled $k$-dimensional Voronoi cell $\e \mathcal{C}_k(z)$ can only intersect finitely many cubic cells $\e z_L+[0,L\e)^k$ with a uniform bound on the cardinality, we can again use the energy bound in $A$ and argue as for (\ref{numberest}) to conclude that
\begin{equation}\label{rarecubes}
\#(I_{\e}^1\cup I_{\e}^2)\leq C\e^{1-k}.
\end{equation}
Combining (\ref{largerlattice}) and (\ref{rarecubes}) we infer from the definition of the set $I_{\e}^3$ that
\begin{align*}
\sum_{\substack{\e z\in \e P_k\L \\ \e z\in A^{\prime}}}&\e^k|Pu_{\e}(\e z)-Pv_{\e}(\e z)|\leq C\e+\sum_{z_L\in I_{\e}^3}\sum_{\substack{\e z\in \e P_k\L \\ \e z\in \e z_L+[0,L\e)^k}}\e^k|Pu_{\e}(\e z)-Pv_{\e}(\e z)|\\
&\leq C\e+C\sum_{z_L\in I_{\e}^3}\int_{\e z_L+[0,L\e)^k}|Pu_{\e}(s)-Pv_{\e}(s)|\,\mathrm{d}s\leq C\e+C\|Pu_{\e}-Pv_{\e}\|_{L^1(A)}.
\end{align*}
This concludes the proof, since the last term tends to $0$ by assumption.
\end{proof}
\hspace*{0,5cm}
Following some ideas in \cite{ACG2} we introduce an auxiliary deterministic square lattice on which we will rewrite the energies $E_{\e}$. This lattice will turn out to be a convenient way to control the long-range interactions.\\

On setting  $r^{\prime}=\frac{r}{\sqrt{d}}$ it follows that $\#\{\L\cap\{\alpha+[0,r^{\prime})^d\}\}\leq 1$  for all $\alpha\in \rzd$. We now set
\begin{align*}
\mathcal{Z}_{r^{\prime}}(\L):=&\{\a\in r^{\prime}\mathbb{Z}^d:\;\#\left(\L\cap \{\a+[0,r^{\prime})^d\}\right)=1\},
\\
x_{\a}:=& \L\cap\{\a+[0,r^{\prime})^d\},\quad\a\in\mathcal{Z}_{r^{\prime}}(\L)
\end{align*}
and, for $\xi\in\rzd,\,U\subset\R^k$ and $\e>0$, 
\begin{align*}
R^{\xi}_{\e}(U)&:=\{\a:\;\a,\a+\xi\in\mathcal{Z}_{r^{\prime}}(\L),\,\e x_{\a},\e x_{\a+\xi}\in P_k^{-1}U\}.
\end{align*}
Note that by (\ref{thickness}), enlarging $M$ if necessary, it is enough to consider $\xi\in \rzd_M:=\rzd\cap (\R^k\times [-2M,2M]^{d-k})$. We can then rewrite the localized energy as
\begin{align*}
E_{\e}(u,A)&=\sum_{\xi\in \rzd_M}\sum_{\a\in R^{\xi}_{\e}(A)}\e^{k-1}f_{\e}(x_{\a},x_{\a+\xi},u(\e x_{\a}),u(\e x_{\a+\xi})).
\end{align*}

\begin{remark}\label{flatdeday}
Observe that we can write 
\begin{equation*}
\{\xi\in\rzd_M\}=\bigcup_{\substack{z\in r^{\prime}\mathbb{Z}^{d-k}\\|z|_{\infty}\leq 2M}}\{\xi=(\xi_k,z_1,\dots,z_{d-k}):\;\xi_k\in r^{\prime}\mathbb{Z}^k\}.
\end{equation*}
Hence the monotonicity assumption from Hypothesis 1 allows to transfer the decay  of long-range interactions to the discrete environment as follows: Given $\delta>0$, there exists $L_{\delta}>0$ such that 
\begin{equation}\label{lrdecay}
\sum_{\substack{\xi\in \rzd_M\\|\xi|>L_{\delta}}}J_{lr}(|\hat{\xi}|)|\xi|\leq \delta,
\end{equation}
where $\hat{\xi}\in \xi+[-r^{\prime},r^{\prime}]^d$ is such that $|\hat{\xi}|=\dist([0,r^{\prime})^d,[0,r^{\prime})^d+\xi)$. This decay property along with Lemma \ref{longtoshort} below will be crucial to control the long-range interactions. However note that $L_{\delta}$ in general depends on $M$.
\end{remark}
The following lemma asserts that on convex domains we can essentially control the long-range interactions by considering only nearest neighbours.

\begin{lemma}\label{longtoshort}
Let $B\subset\mathcal{A}(\R^k)$ be convex and $B^{\e}=\{x\in\R^k:\;\dist(x,B)< 3(R+M)\e\}$. Then there exists a constant $C$ depending only on $r,R,M$ in Definition \ref{flatadmissible} such that for every $\xi\in \rzd_M$ and every $u:\e\L\to\S$ it holds
\begin{equation*}
\sum_{\a\in R_{\e}^{\xi}(B)}f_{\e}(x_{\a},x_{\a+\xi},u(\e x_{\a}),u(\e x_{\a+\xi}))\leq CJ_{lr}(|\hat{\xi}|)|\xi|\sum_{\substack{(x,y)\in\mathcal{N\!N}(\L)\\ \e x,\e y\in P_k^{-1}B^{\e}}}f_{\e}(x,y,u(\e x),u(\e y)).
\end{equation*}
\end{lemma}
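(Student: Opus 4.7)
My plan is to bound every long-range interaction on the left-hand side by a sum of nearest-neighbour interactions along a discrete path joining $x_\alpha$ and $x_{\alpha+\xi}$, then swap the order of summation and count, for each fixed nn-pair, how many different $\alpha$'s use it. This is the same path-building strategy introduced in the first part of the proof of Lemma~\ref{compactness}, now applied at the level of a single long-range jump $\xi$.

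Fix $\alpha\in R_\e^\xi(B)$. Applying the construction around (\ref{onlynncells}) to the endpoints $x_\alpha,x_{\alpha+\xi}$, I pick a generic translate $g^*\in\mathcal{G}_\delta(x_\alpha,x_{\alpha+\xi})$ (with $\delta$ small) crossing only full $(d{-}1)$-dimensional Voronoi facets, and let $\{x^0=x_\alpha,x^1,\dots,x^m=x_{\alpha+\xi}\}$ be the ordered list of points of $\L$ whose Voronoi cells are met by $g^*$. Consecutive points form an nn-pair, Definition~\ref{flatadmissible}(i) bounds $m\leq C|\xi|/r$, and convexity of $B$ together with the bound (\ref{thickness}) ensures $\e P_k(x^i)\in B^\e$ for every $i$ once $3(R+M)\e$ is used as buffer (the $R\e$ coming from the Voronoi cells intersecting $g^*$, the $M\e$ from the projection of a thin lattice, the remaining slack accommodating $\delta\e$ and the neighbouring point $x^{i+1}$). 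Hypothesis~1 together with the triangle inequality then gives
\begin{align*}
f_\e(x_\alpha,x_{\alpha+\xi},u(\e x_\alpha),u(\e x_{\alpha+\xi}))
&\leq J_{lr}(|\hat\xi|)\sum_{i=0}^{m-1}|u(\e x^i)-u(\e x^{i+1})|\\
&\leq c^{-1}J_{lr}(|\hat\xi|)\sum_{i=0}^{m-1}f_\e(x^i,x^{i+1},u(\e x^i),u(\e x^{i+1})),
\end{align*}
using monotonicity of $J_{lr}$ and $|\hat\xi|\leq|x_\alpha-x_{\alpha+\xi}|$ in the first line and the coercivity of $f_{nn}^\e$ (together with $f_\e=f_{nn}^\e$ on nn-pairs) in the second.

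Summing over $\alpha$ and exchanging the order of summation, a given nn-pair $(x,y)\in\NNS$ is visited by the path associated with $\alpha$ only if $g^*(\alpha)$ crosses $\mathcal{C}(x)\cap\mathcal{C}(y)$, so that $[x_\alpha,x_{\alpha+\xi}]$ passes within distance $R+\delta$ of $x$. Such $\alpha$'s therefore lie in a cylinder of axis $\xi$, length $|\xi|+O(r)$ and transversal size $O(R+M)$ (the parameter $M$ bounding the $(d-k)$ directions orthogonal to $\R^k$ via (\ref{thickness})); by Definition~\ref{flatadmissible}(i), the number of such $\alpha\in\rzd$ is at most $C|\xi|$, with $C$ depending only on $r,R,M$. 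Combining this with the pointwise bound from the previous paragraph delivers the claimed inequality. I expect the counting step to be the delicate one: it is precisely here that the thin-film thickness $M$ enters the constant, and one has to use both the convexity of $B$ and the buffer $3(R+M)\e$ in the definition of $B^\e$ to guarantee that all the nn-pairs actually accounted for on the right-hand side sit in $P_k^{-1}B^\e$.
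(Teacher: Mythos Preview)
Your proof is correct and follows essentially the same route as the paper: build an nn-path from $x_\alpha$ to $x_{\alpha+\xi}$ via the genericity argument of (\ref{onlynncells}), bound each long-range term by $J_{lr}(|\hat\xi|)$ times the sum of nn-increments along that path (using the upper and lower bounds in Hypothesis~1), then swap the order of summation and bound the multiplicity $\#\{\alpha: \{x,y\}\cap G(\alpha,\xi)\neq\emptyset\}$ by $C|\xi|$ using that the admissible $x_\alpha$ sit in a tube of length $\sim|\xi|$ and bounded cross-section. The paper's argument is the same, only packaged slightly more tersely via the set $T_\e^\xi(x,y)$ and the inclusion $x_\alpha\in\{z+t\xi:|z-x|\leq C,\ |t|\leq C\}$.
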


\begin{proof}
Let $\a\in R_{\e}^{\xi}(B)$. As in the proof of Lemma \ref{compactness} we consider the collection of segments $\mathcal{G}_{\delta}(x_{\a},x_{\a+\xi})$ defined as in (\ref{def-ray}). By the same argument there exists a segment $g^*\subset \mathcal{G}_{\delta}(x_{\a},x_{\a+\xi})$ satisfying (\ref{onlynncells}). Consider then the set $G(\a,\xi)=\{x\in\L:\,g^*\cap \mathcal{C}(x)\neq\emptyset\}$. By construction we can number $G(\a,\xi)=\{x_{\a}=x^0,\dots,x^N=x_{\a+\xi}\}$ such that $(x^i,x^{i+1})\in\mathcal{N\!N}(\L)$. By the bounds of Hypothesis 1 it holds that
\begin{align}\label{singlecontribution}
f_{\e}(x_{\a},x_{\a+\xi},u(\e x_{\a}),u(\e x_{\a+\xi}))&\leq J_{lr}(|\hat{\xi}|)|u(\e x_{\a})-u(\e x_{\a+\xi})|\leq J_{lr}(|\hat{\xi}|)\sum_{\substack{(x,y)\in\mathcal{N\!N}(\L)\\x,y\in G(\a,\xi)}}|u(\e x)-u(\e y)|\nonumber
\\
&\leq CJ_{lr}(|\hat{\xi}|)\sum_{\substack{(x,y)\in\mathcal{N\!N}(\L)\\x,y\in \frac{1}{\e}P_k^{-1}B^{\e}\cap G(\a,\xi)}}f_{\e}(x,y,u(\e x),u(\e y)),
\end{align}
where we used that by convexity we have $G(\a,\xi)\subset \frac{1}{\e}P_k^{-1}B^{\e}$ provided $\delta$ is small enough. Now given $(x,y)\in\mathcal{N\!N}(\L)\cap \frac{1}{\e}P_k^{-1}B^{\e}$ we set
\begin{equation*}
T_{\e}^{\xi}(x,y):=\{\a\in R_{\e}^{\xi}(B):\;\{x,y\}\cap G(\a,\xi)\neq\emptyset\}.
\end{equation*}
Note that if $\a\in T_{\e}^{\xi}(x,y)$, then 
\begin{equation*}
x_{\a}\in\{z+t\xi:\;|z-x|\leq C,\,|t|\leq C\}
\end{equation*}
for some $C>0$, and hence $\#T_{\e}^{\xi}(x,y)\leq C|\xi|$ by Definition \ref{flatadmissible}. The claim now follows by summing (\ref{singlecontribution}) over all $\a\in R_{\e}^{\xi}(B)$.
\end{proof}

\section{Integral representation on the flat set}
Our first aim is to characterize all possible variational limits of energies $E_{\e}$ that satisfy Hypothesis 1. As for the case $k=d$ and $\S=\{\pm 1\}$ treated in \cite{ACR}, the following version of Theorem 3 in \cite{BFLM} will be the key ingredient:

\begin{theorem}\label{represent}
Let ${\mathcal F}: BV(D,\S)\times {\mathcal A}(D)\to [0,+\infty)$ satisfy the following hypotheses:
\begin{itemize}
	\item[(i)] ${\mathcal F}(u, \cdot)$ is the restriction to ${\mathcal A}(D)$ of a Radon measure;
	\item[(ii)] ${\mathcal F}(u, A)={\mathcal F}(v, A)$ whenever $u = v$ a.e. on $A\in{\mathcal A}(D)$;
	\item[(iii)] ${\mathcal F}(\cdot, A)$ is $\lud$ lower semicontinuous for every $A\in {\mathcal A}(D)$;
	\item[(iv)] there exists $c>0$ such that
	$$
	\frac 1 c \mathcal{H}^{k-1}(S_u\cap A)\leq  {\mathcal F}(u, A) \leq c \,\mathcal{H}^{k-1}(S_u\cap A)
	$$
	for every $(u, A)\in BV(D,\S)\times {\mathcal A}(D)$.
\end{itemize}
Then for every $u\in BV(D,\S)$ and $A\in{\mathcal A}(D)$
$$
{\mathcal F}(u, A)=\int_{S_u\cap A}g(x,u^+,u^-,\nu_u)\,\mathrm{d}\mathcal{H}^{k-1},
$$
with
$$
g(x_0,s_i,s_j,\nu)=\limsup_{\rho\to 0}\frac{m(u^{ij}_{x_0,\nu}, Q_\nu(x_0,\rho))}{\rho^{k-1}},
$$
where, for all $s_i,s_j\in\S$,
\begin{align*}
u^{ij}_{x_0,\nu}:=
\begin{cases}
s_i & \mbox{if $\langle x-x_0,\nu\rangle\geq 0$,} \\
s_j & \mbox{otherwise,}
\end{cases}
\end{align*}
and for any $(v,A)\in BV(D,\S)\times {\mathcal A}(D)$ we set
\begin{equation*}
m(v,A)= \inf\{ {\mathcal F}(u, A): u\in BV(A,\S),\ u=v\ \hbox{in a neighbourhood of } \partial A\}.
\end{equation*}	
\end{theorem}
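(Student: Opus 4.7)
The plan is to reduce the statement to the abstract integral representation theorem of Bouchitt\'e--Fonseca--Leoni--Mascarenhas \cite{BFLM}, which is tailored exactly to functionals on $BV$ enjoying a measure property, locality, $L^1$-lower semicontinuity and linear growth on the jump set. The task is thus to verify that the hypotheses of that theorem transfer to our discrete-valued setting $BV(D,\S)$ and that the abstract density specializes to the formula in the statement.

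First, by (i), $\mathcal{F}(u,\cdot)$ extends to a Radon measure on the Borel subsets of $D$. Hypothesis (iv) together with the $\mathcal{H}^{k-1}$-rectifiability of $S_u$ for $u\in BV(D,\S)$ implies that this measure is absolutely continuous with respect to $\mathcal{H}^{k-1}\LL S_u$, so Radon--Nikodym yields a Borel density $g_u$ with $\frac{1}{c}\leq g_u\leq c$ and
\begin{equation*}
\mathcal{F}(u,A)=\int_{S_u\cap A}g_u(x)\,\mathrm{d}\mathcal{H}^{k-1}(x).
\end{equation*}
By Besicovitch differentiation, for $\mathcal{H}^{k-1}$-a.e.\ $x_0\in S_u$ we moreover have $g_u(x_0)=\lim_{\rho\to 0}\mathcal{F}(u,Q_{\nu_u(x_0)}(x_0,\rho))/\rho^{k-1}$. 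The central step is then a blow-up argument: at an $\mathcal{H}^{k-1}$-density one point $x_0$, the rescaled functions $u(x_0+\rho\,\cdot)$ converge in $L^1_{\mathrm{loc}}$ to the pure jump $u^{ij}_{x_0,\nu}$ with $i=u^+(x_0)$, $j=u^-(x_0)$, $\nu=\nu_u(x_0)$. Combining this with locality (ii), lower semicontinuity (iii), the scaling behaviour of the ratio $\mathcal{F}(\cdot,Q_\nu(x_0,\rho))/\rho^{k-1}$, and the definition of $m(\cdot,\cdot)$, one identifies $g_u(x_0)=g(x_0,u^+(x_0),u^-(x_0),\nu_u(x_0))$.

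The main technical obstacle is matching boundary conditions. The density $g_u(x_0)$ is extracted from $\mathcal{F}(u,Q_\nu(x_0,\rho))$ without prescribing any trace on $\partial Q_\nu(x_0,\rho)$, whereas $g$ is defined through infima over competitors that coincide with $u^{ij}_{x_0,\nu}$ near the boundary. To bridge the gap one performs a surgery: given an almost-optimal sequence, one modifies it on a thin annular layer $Q_\nu(x_0,\rho)\setminus Q_\nu(x_0,\rho(1-t))$ so as to enforce the datum $u^{ij}_{x_0,\nu}$ near $\partial Q_\nu(x_0,\rho)$. A Fubini/mean-value argument over $t$ produces a good slice on which the upper bound in (iv) forces the interpolation error to be of lower order as $\rho\to 0$. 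Running the same surgery for the opposite inequality closes the loop and gives the integral representation with the density displayed in the statement.
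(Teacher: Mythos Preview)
The paper does not prove Theorem~\ref{represent}; it states it as ``the following version of Theorem~3 in \cite{BFLM}'' and uses it as a black box. Your proposal correctly identifies this reduction to \cite{BFLM} and additionally sketches the standard blow-up and boundary-matching argument underlying such results, which is consistent with how the BFLM theorem is proved but goes beyond what the paper itself supplies.
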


\hspace*{0,5cm}
The following theorem is the main result of this section. 

\begin{theorem}\label{mainthm1}
Let $\L$ be a thin admissible lattice and let $f_{nn}^{\e}$ and $f_{lr}^{\e}$ satisfy Hypothesis 1. For every sequence of $\e\to 0^+$ there exists a subsequence $\e_{n}$ such that the functionals $E_{\e_{n}}$ $\Gamma$-converge with respect to the convergence of Definition \ref{defconv} with $A=D$ to a functional $E:L^1(D,\R^q)\rightarrow [0,+\infty]$ of the form
\begin{equation*}
E(u)=
\begin{cases}\displaystyle
\int_{S_u}\phi(x,u^+,u^-,\nu_u)\,\mathrm{d}\mathcal{H}^{k-1} &\mbox{if $u\in BV(D,\S)$,} \\
+\infty &\mbox{otherwise.}
\end{cases}
\end{equation*}
Moreover a local version of the statement above holds: For all $u\in BV(D,\S)$ and all $A\in\Ard$
\begin{equation*}
\Gamma\hbox{-}\lim_n E_{\e_{n}}(u,A)=\int_{S_u\cap A}\phi(x,u^+,u^-,\nu_u)\,\mathrm{d}\mathcal{H}^{k-1},
\end{equation*}
with respect to the same convergence as above.
\end{theorem}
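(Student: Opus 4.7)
My plan is to apply the localization method of $\Gamma$-convergence together with the integral representation result stated as Theorem \ref{represent}. Working with the localized functionals $E'(u,\cdot)$ and $E''(u,\cdot)$ introduced above, I need to (a) extract a subsequence $\e_n$ along which $E'(u,A)=E''(u,A)$ on $BV(D,\S)\times\Ard$, and (b) verify the four hypotheses of Theorem \ref{represent} for the common limit. By Lemma \ref{compactness}, sequences with equibounded energy can only converge to functions in $BV(D,\S)$, so $E(u)=+\infty$ outside that space is automatic.

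The first concrete step is the two-sided bound
\[
\tfrac{1}{c}\,\Hk(S_u\cap A)\leq E'(u,A)\leq E''(u,A)\leq c\,\Hk(S_u\cap A)
\]
for all $u\in BV(D,\S)$ and $A\in\Ard$. The lower bound is extracted from the path-counting argument already developed in Lemma \ref{compactness}: each $(k-1)$-dimensional piece of $S_u$ forces a proportional number of nearest-neighbour flips, each of which contributes at least $c$ to $E_\e$ by Hypothesis 1. For the upper bound I would first reduce to $u$ polyhedral by density, then define the recovery sequence $u_\e(\e x):=u(\e P_k(x))$ away from an $O(\e)$-neighbourhood of $S_u$, with an arbitrary assignment inside. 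Nearest-neighbour flips are then confined to a thin tubular region around $S_u$; long-range contributions are bounded by $C\sum_{\xi\in \rzd_M}J_{lr}(|\hat\xi|)|\xi|\,\Hk(S_u\cap A)\leq CJ\,\Hk(S_u\cap A)$ via Lemma \ref{longtoshort} and Remark \ref{flatdeday}.

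The main technical obstacle — and the step I expect to be hardest — is the measure property of the localized $\Gamma$-limits, i.e.\ the \emph{fundamental estimate} (or nested subadditivity) needed to apply the De Giorgi--Letta criterion. Given sequences $u_\e,v_\e\to u$ with equibounded energy on nested open sets $A'\subset\subset A''\subset\subset A$ and $B\in\Ard$, I need to splice them on a safety annulus without generating excess interaction energy. Following the scheme of \cite{BFLM,ACG2,ACR} and its thin-film variant in \cite{BFF}, I would partition a thick annular strip into many thin parallel layers, and by an averaging/pigeonhole argument choose one layer on which $\|Pu_\e-Pv_\e\|_{L^1}$ is small (so that Lemma \ref{sumconv} applies to make $u_\e=v_\e$ on the layer at the cost of $o(1)$ in energy) and on which the local short-range energy is uniformly small. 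Long-range pairs crossing the chosen layer are controlled by splitting at scale $L_\delta$: ranges $|\xi|>L_\delta$ are absorbed uniformly into $\delta$ by the decay estimate (\ref{lrdecay}), and ranges $|\xi|\leq L_\delta$ are controlled by the short-range energy in the fattened annulus via Lemma \ref{longtoshort}. Symmetrizing roles of $u_\e$ and $v_\e$ and letting $\delta\to 0$ after $\e\to 0$ yields the required subadditivity; superadditivity is immediate from the additivity of $E_\e$ on disjoint projections.

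Once $E'(u,\cdot)$ and $E''(u,\cdot)$ are traces of Borel measures on $\Ard$, I would invoke the compactness of $\Gamma$-convergence together with a diagonal argument on a countable dense family $\{A_h\}\subset\Ard$ (for instance $Q_\nu(x,\rho)$ with $x$, $\rho$, $\nu$ rational) to extract a subsequence $\e_n$ along which $E'(u,A_h)=E''(u,A_h)$ for every $A_h$ and every $u\in BV(D,\S)$; the coincidence then extends to all $A\in\Ard$ by the measure property. This common functional $E(\cdot,A)$ satisfies the four hypotheses of Theorem \ref{represent}: (i) is the measure property just established, (ii) locality and (iii) $L^1$ lower semicontinuity come from Remark \ref{almostgamma}, and (iv) is the two-sided bound from Step 1. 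The representation theorem then delivers the integral form with the asserted $\limsup$ formula for $\phi$, and the localized statement for $A\in\Ard$ follows at once.
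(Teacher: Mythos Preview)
Your proposal is correct and follows essentially the same route as the paper: the two-sided perimeter bound (the paper's Propositions~\ref{lb} and~\ref{ub}), the nested subadditivity via layer averaging with the long-range/short-range splitting at scale $L_\delta$ and Lemma~\ref{sumconv} (the paper's Proposition~\ref{subadditive}), then inner regularity, $\Gamma$-compactness, De Giorgi--Letta, and finally Theorem~\ref{represent}. One minor wording point: in the cut-and-paste step you do not actually ``make $u_\e=v_\e$ on the layer''; rather you keep the glued function $w_\e^j$ and use Lemma~\ref{sumconv} to show that the averaged interaction error across the chosen layer is $o(1)$ --- but this is exactly what your citation of Lemma~\ref{sumconv} is meant to deliver.
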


\begin{remark}\label{1D}
If $k=1$, then a similar result holds. In this case we obtain a limit energy finite for $u\in BV(D,\S)$ and of the form
\begin{equation*}
E(u)=\sum_{x\in S_u}\phi(x,u^+,u^-).
\end{equation*}
\end{remark}
\hspace*{0,5cm}
The proof of Theorem \ref{mainthm1} will be given later and it is based on Theorem \ref{represent}. We now start proving several propositions that allow us to apply Theorem \ref{represent}.\\
\hspace*{0,5cm}
We start with the growth condition (iv) of Theorem \ref{represent}. Using the lower semicontinuity of the perimeter of level sets in $BV(D,\S)$, one can use the same argument as for Lemma \ref{compactness} to prove the following lower bound for $E^{\prime}(u,A)$:

\begin{proposition}\label{lb}
Assume that Hypothesis 1 holds. Then $E^{\prime}(u,A)<+\infty$ only if $u\in BV(A,\S)$ and there exists a constant $c>0$ independent of $A$ such that	
\begin{equation*}
\frac{1}{c}\Hk(S_u\cap A)\leq E^{\prime}(u,A).
\end{equation*}
\end{proposition}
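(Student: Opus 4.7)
The plan is to reuse, almost verbatim, the perimeter estimate already derived inside the proof of Lemma \ref{compactness}, and to combine it with $L^1$-lower semicontinuity of the total variation in $BV$.

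First, I would select $u_{\e}:\e\L\to\S$ with $Pu_{\e}\to u$ in $L^1(A)$ realizing $E^{\prime}(u,A)=\liminf_{\e}E_{\e}(u_{\e},A)<+\infty$. Passing to a subsequence so that the liminf becomes a limit, Lemma \ref{compactness} applies and yields $u\in BV(A,\S)$, which settles the first assertion. The same extraction also shows that $Pu_{\e}$ is equibounded in $BV_{\mathrm{loc}}(A)$, which is what I now need to quantify.

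For the lower bound, fix $A^{\prime}\subset\subset A$ with $A^{\prime}\in\Ard$. The second half of the proof of Lemma \ref{compactness} (the local path construction combined with the lower bound on $f_{nn}^{\e}$ from Hypothesis 1) already produces
\begin{equation*}
\Hk(S_{Pu_{\e}}\cap A^{\prime})\leq C\, E_{\e}(u_{\e},A),
\end{equation*}
where $C$ depends only on the structural constants of $\L$. Since $Pu_{\e}$ takes values in the bounded convex hull $\mathrm{co}(\S)$, its jumps are uniformly controlled, so $|DPu_{\e}|(A^{\prime})\leq C\, E_{\e}(u_{\e},A)$. By $L^1(A^{\prime})$-lower semicontinuity of the total variation, $|Du|(A^{\prime})\leq C\, E^{\prime}(u,A)$. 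Finally, since $u$ takes values in the finite set $\S$, the jumps $|u^+-u^-|$ are bounded below by $\delta:=\min\{|s_i-s_j|:s_i\neq s_j\}>0$, whence
\begin{equation*}
\delta\,\Hk(S_u\cap A^{\prime})\leq|Du|(A^{\prime})\leq C\, E^{\prime}(u,A).
\end{equation*}
Letting $A^{\prime}$ exhaust $A$ through an increasing sequence of Lipschitz open sets and using the inner regularity of $\Hk\mres S_u$ closes the estimate.

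The only point that requires attention is that the energy is measured on $A$ but the perimeter bound is for $S_{Pu_{\e}}\cap A^{\prime}$; this is already engineered into the construction of Lemma \ref{compactness}, where the connecting paths stay inside $P_k^{-1}(A^{\prime}+B_{R\e}(0))\subset P_k^{-1}A$ for $\e$ small enough, so no further work is needed. Equivalently, one could follow the route suggested by the paper and apply lower semicontinuity of the perimeter to each level set $\{Pu_{\e}=s_i\}$ separately; I do not expect any genuine obstacle, since all the geometric/combinatorial effort has been done in Lemma \ref{compactness}.
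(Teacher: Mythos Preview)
Your proposal is correct and follows essentially the same route the paper sketches: reuse the perimeter estimate from the proof of Lemma~\ref{compactness} on compactly contained $A'\subset\subset A$, pass to the limit by lower semicontinuity, and then exhaust $A$. The only cosmetic difference is that the paper phrases the limit passage via ``lower semicontinuity of the perimeter of level sets'' whereas you use lower semicontinuity of the total variation $|DPu_{\e}|$; for piecewise-constant functions with uniformly bounded values these are equivalent, and you already note this at the end of your argument.
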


In the next step we provide a suitable upper bound for $E^{\prime\prime}(u,A)$.

\begin{proposition}\label{ub}
Assume Hypothesis 1 holds. Then there exists a constant $c>0$ such that, for all $A\in\Ard$ and all $u\in BV(D,\S)$,
\begin{equation*}
E^{\prime\prime}(u,A)\leq c\,\Hk(S_u\cap A).
\end{equation*}
\end{proposition}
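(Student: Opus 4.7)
The plan is to establish the bound first for $u$ with polyhedral jump set and then extend to arbitrary $u\in BV(D,\mathcal{S})$ by density. Since $u\mapsto E^{\prime\prime}(u,A)$ is $L^1(A)$-lower semicontinuous by Remark~\ref{almostgamma}, and polyhedral partitions are dense in $BV(D,\mathcal{S})$ in the strict topology (with $\mathcal{H}^{k-1}(S_{u_n}\cap A)\to\mathcal{H}^{k-1}(S_u\cap A)$ along such a sequence), it suffices to produce, for each polyhedral $u$, a recovery sequence $u_\e\to u$ (in the sense of Definition~\ref{defconv}) satisfying $\limsup_\e E_\e(u_\e,A)\leq c\,\mathcal{H}^{k-1}(S_u\cap A)$.

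The natural choice is the projection-and-sample
$$
u_\e(\e x):=u(\e P_k(x)),\qquad x\in\mathcal{L},
$$
defined with an arbitrary convention on the $\mathcal{H}^{k-1}$-null set $\e P_k(x)\in S_u$. All points of a fibre $P_k^{-1}(z)\cap\mathcal{L}$ then carry the same value, hence $Pu_\e(\e z)=u(\e z)$, and $Pu_\e\to u$ in $L^1(A)$ by dominated convergence. To estimate the energy I cover $A$ by finitely many open convex subsets $B$; on each such $B$, Lemma~\ref{longtoshort} yields, for every $\xi$,
$$
\sum_{\a\in R_\e^\xi(B)}f_\e(x_\a,x_{\a+\xi},u_\e(\e x_\a),u_\e(\e x_{\a+\xi}))\;\leq\;CJ_{lr}(|\hat\xi|)|\xi|\!\!\sum_{\substack{(x,y)\in\mathcal{N\!N}(\mathcal{L})\\ \e x,\e y\in P_k^{-1}B^\e}}\!\!f_\e(x,y,u_\e(\e x),u_\e(\e y)),
$$
and summing on $\xi\in\rzd_M$ the prefactors $J_{lr}(|\hat\xi|)|\xi|$ add up to a finite constant by Hypothesis~1 (via Remark~\ref{flatdeday}). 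It thus suffices to estimate the nearest-neighbour sum on a slight enlargement $A^\e$ of $A$.

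A pair $(x,y)\in\mathcal{N\!N}(\mathcal{L})$ contributes to that sum only when $\e P_k(x)$ and $\e P_k(y)$ lie on opposite sides of $S_u$; both projected points then belong to the $C\e$-neighbourhood of $S_u\cap A^\e$, whose $k$-dimensional volume is at most $C\e\,\mathcal{H}^{k-1}(S_u\cap A^\e)$ by the Minkowski content of a polyhedron. Combined with the density bound (\ref{localratio}) on $P_k\mathcal{L}$ and the pointwise estimate on $f_{nn}^\e$ from Hypothesis~1, the number of contributing pairs is at most $C\e^{1-k}\mathcal{H}^{k-1}(S_u\cap A^\e)$; multiplying by $\e^{k-1}$ gives $E_\e(u_\e,A)\leq C\,\mathcal{H}^{k-1}(S_u\cap A^\e)$, and passing to the $\limsup$ along $A^\e\searrow A$ yields the claim. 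The main obstacle is the management of the long-range interactions, but this is entirely absorbed by Lemma~\ref{longtoshort} at the cost of the weight $J_{lr}(|\hat\xi|)|\xi|$, whose summability is precisely the content of Hypothesis~1; the residual technical point of controlling the discrepancy between $A^\e$ and $A$ is handled by approximating $A$ from inside by open sets whose boundaries are $\mathcal{H}^{k-1}$-negligible for $S_u$, which can be arranged by a standard Fubini-type selection in the family $\{A+B_t\}_{t>0}$.
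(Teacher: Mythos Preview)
Your overall strategy---define the recovery sequence $u_\e(\e x)=u(\e P_k(x))$ for polyhedral $u$, reduce to a nearest-neighbour estimate via Lemma~\ref{longtoshort}, count contributing pairs by the Minkowski content of $S_u$, then pass to general $u$ by density---matches the paper. But two steps do not go through as written.

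\emph{First, the convex covering.} Writing $A\subset\bigcup_i B_i$ with $B_i$ convex and applying Lemma~\ref{longtoshort} on each $B_i$ controls only those $\alpha\in R_\e^\xi(A)$ for which $\e x_\alpha$ and $\e x_{\alpha+\xi}$ lie in a \emph{common} $B_i$. For $|\xi|$ large this may fail (the endpoints may fall in different pieces of the cover), so $\sum_i\sum_{\alpha\in R_\e^\xi(B_i)}$ need not dominate $\sum_{\alpha\in R_\e^\xi(A)}$. The paper avoids this by not covering at all: it fixes $\delta>0$, chooses $L_\delta$ via (\ref{lrdecay}), and for $|\xi|\le L_\delta$ runs the path argument of Lemma~\ref{longtoshort} directly on $A$---the path stays in $A^\delta=A+B_\delta(0)$ because $\e|\xi|\le\e L_\delta<\delta$ for small $\e$, and no convexity is needed. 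For $|\xi|>L_\delta$ it applies Lemma~\ref{longtoshort} on all of $\R^k$ (which is convex), yielding the global bound $CJ_{lr}(|\hat\xi|)|\xi|\,\mathcal{H}^{k-1}(S_u)$, whose sum over these $\xi$ is at most $C\delta\,\mathcal{H}^{k-1}(S_u)$. Letting $\delta\to 0$ gives (\ref{withclosure}), i.e.\ $E''(u,A)\le C\,\mathcal{H}^{k-1}(S_u\cap\overline{A})$.

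\emph{Second, the passage from $\overline{A}$ to $A$ and the density step.} Your claim that one can take polyhedral $u_n\to u$ with $\mathcal{H}^{k-1}(S_{u_n}\cap A)\to\mathcal{H}^{k-1}(S_u\cap A)$ is not what the standard density result gives: \cite{BrCoGa} provides strict convergence on $D$, hence $\mathcal{H}^{k-1}(S_{u_n}\cap D)\to\mathcal{H}^{k-1}(S_u\cap D)$, which does \emph{not} localise to $A$. Your suggested fix (``approximate $A$ from inside \dots\ in the family $\{A+B_t\}_{t>0}$'') is self-contradictory, since $A+B_t\supset A$. The paper's device is different: first replace $u$ by an extension $\tilde u\in BV_{\mathrm{loc}}(\R^k,\S)$ with $\tilde u=u$ on $A$ and $\mathcal{H}^{k-1}(S_{\tilde u}\cap\partial A)=0$ (Lemma~2.7 in \cite{BrCoGa}); by locality, $E''(u,A)=E''(\tilde u,A)$. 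Then approximate $\tilde u$ by polyhedral $u_n$ strictly on $D$, use (\ref{withclosure}) for each $u_n$, and conclude via
\[
\limsup_n\mathcal{H}^{k-1}(S_{u_n}\cap\overline{A})\le\mathcal{H}^{k-1}(S_{\tilde u}\cap\overline{A})=\mathcal{H}^{k-1}(S_u\cap A),
\]
the inequality following from the $L^1$-lower semicontinuity of $v\mapsto\mathcal{H}^{k-1}(S_v\cap(D\setminus\overline{A}))$ combined with strict convergence on $D$.
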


\begin{proof}
First, assume that $u$ is a polyhedral function on $\R^k$, which means that all level sets have boundaries that coincide (up to $\mathcal{H}^{k-1}$-null sets) with a finite union of $k-1$-dimensional simplexes. We define a sequence $u_{\e}:\e\L\to \S$ by setting 
\begin{equation*}
u_{\e}(\e x):=u(\e P_k(x))
\end{equation*}
Note that $u_{\e}\to u$ in the sense of Definition \ref{defconv}. Given $\delta>0$, we choose $L_{\delta}>0$ such that (\ref{lrdecay}) holds. We further set $A^{\delta}=A+B_{\delta}(0)$. For $|\xi|\leq L_{\delta}$, we can argue as in the proof of Lemma \ref{longtoshort} to show that, for $\e$ small enough, it holds that
\begin{align}\label{polyhedralshort}
\sum_{\a\in R_{\e}^{\xi}(A)}\e^{k-1}f_{\e}(x_{\a},x_{\a+\xi},u_{\e}(\e x_{\a}),u_{\e}(\e x_{\alpha+\xi}))&\leq CJ_{lr}(|\hat{\xi}|)|\xi|\sum_{\substack{(x,y)\in\mathcal{N\!N}(\L)\\ \e x,\e y\in P_k^{-1}A^{\delta}}}\e^{k-1}|u_{\e}(\e x)-u_{\e}(\e y)|\nonumber\\
&\leq CJ_{lr}(|\hat{\xi}|)|\xi| \mathcal{H}^{k-1}(S_u\cap A^{\delta}),
\end{align}
where the last estimate follows from the regularity of $S_u$. Next we consider the interactions where $|\xi|>L_{\delta}$. Let $u$ be a polyhedral function; applying Lemma \ref{longtoshort} we deduce for any $\e>0$ the weaker bound
\begin{align}\label{polyhedrallong}
\sum_{\a\in R_{\e}^{\xi}(A)}\e^{k-1}f_{\e}(x_{\a},x_{\a+\xi},u_{\e}(\e x_{\a}),u_{\e}(\e x_{\alpha+\xi}))&\leq \sum_{\a\in R_{\e}^{\xi}(\R^k)}\e^{k-1}f_{\e}(x_{\a},x_{\a+\xi},u_{\e}(\e x_{\a}),u_{\e}(\e x_{\alpha+\xi}))\nonumber
\\
&\leq CJ_{lr}(|\hat{\xi}|)|\xi|\mathcal{H}^{k-1}(S_u).
\end{align}
Combining (\ref{polyhedralshort}),(\ref{polyhedrallong}) and (\ref{lrdecay}) and the integrability assumption from Hypothesis 1, we deduce that
\begin{equation*}
E^{\prime\prime}(u,A)\leq \limsup_{\e}E_{\e}(u_{\e},A)\leq C\Hk(S_u\cap A^{\delta})+C\delta\Hk(S_u).
\end{equation*}
As $\delta>0$ was arbitrary we obtain that
\begin{equation}\label{withclosure}
E^{\prime\prime}(u,A)\leq C\mathcal{H}^{k-1}(S_u\cap\overline{A}).
\end{equation}
\hspace*{0,5cm}
Now we use locality and a density argument. Indeed, for every $u\in BV(D,\S)$ we can find a function $\tilde{u}\in BV_{\rm loc}(\R^k,\S)$ such that $u=\tilde{u}$ on $A$ and $\mathcal{H}^{k-1}(S_{\tilde{u}}\cap\partial A)=0$ (see Lemma 2.7 in \cite{BrCoGa}). From Remark \ref{almostgamma} it follows that $E^{\prime\prime}(u,A)=E^{\prime\prime}(\tilde{u},A)$. Then, by \cite[Corollary 2.4]{BrCoGa} there exists a sequence $u_n\in BV_{\rm loc}(\R^k,\S)$ of polyhedral functions such that $u_n\to \tilde{u}$ in $L^1(D)$ and $\Hk(S_{u_n}\cap D)\to\Hk(S_{\tilde{u}}\cap D)$. By the $L^1(D)$-lower semicontinuity of $E^{\prime\prime}(\cdot,A)$ stated in Remark \ref{almostgamma} and (\ref{withclosure}) we obtain that
\begin{equation*}
E^{\prime\prime}(u,A)\leq \liminf_nE^{\prime\prime}(u_n,A) \leq C\limsup_n\mathcal{H}^{k-1}(S_{u_n}\cap \overline{A})\leq C\mathcal{H}^{k-1}(S_{\tilde{u}}\cap \overline{A})=C\mathcal{H}^{k-1}(S_u\cap A),
\end{equation*}
where the last inequality is a consequence of the $L^1(D)$-lower semicontinuity of $u\mapsto \mathcal{H}^{k-1}(S_u\cap D\backslash\overline{A})$ for $u\in BV(D,\S)$.
\end{proof}

\hspace*{0,5cm}
As usual for applying integral-representation theorems we next establish a weak subadditivity property of $A\mapsto E^{\prime\prime}(u,A)$.

\begin{proposition}\label{subadditive}
Let $f_{nn}^{\e}$ and $f_{lr}^{\e}$ satisfy Hypothesis 1. Then, for every $A,B\in\mathcal{A}^R(D)$, every $A^{\prime}\subset\Ard$ such that $A^{\prime}\subset\subset A$ and every $u\in BV(D,\S)$,	
\begin{equation*}
E^{\prime\prime}(u,A^{\prime}\cup B)\leq E^{\prime\prime}(u,A)+E^{\prime\prime}(u,B).
\end{equation*} 
\end{proposition}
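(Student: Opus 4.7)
The plan is to prove weak subadditivity via the classical De Giorgi–style ``fundamental estimate'' of $\Gamma$-convergence, adapted to this discrete thin-film setting: glue almost-optimal sequences for $E''(u,A)$ and $E''(u,B)$ across a transition region inside $A\setminus\overline{A'}$, and average over many such regions to kill the crossing error. The main inputs are Hypothesis~1 together with the discrete long-range decay \eqref{lrdecay}, Lemma \ref{longtoshort}, and Lemma \ref{sumconv}.

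First I would invoke Remark \ref{almostgamma} to pick recovery sequences $u_\e^A\to u$ in $A$ and $u_\e^B\to u$ in $B$ (in the sense of Definition \ref{defconv}) such that $\limsup_\e E_\e(u_\e^A,A)=E''(u,A)$ and $\limsup_\e E_\e(u_\e^B,B)=E''(u,B)$; by Hypothesis 1 and the assumed finiteness, the energies are uniformly bounded. Fix $\delta>0$ and pick $L_\delta$ as in \eqref{lrdecay}. Partition $A\setminus \overline{A'}$ into $N$ sub-layers of equal thickness $h=\mathrm{dist}(A',\partial A)/N$ with $h\gg L_\delta\e$, giving nested Lipschitz sets $A'=A_0\subset\subset A_1\subset\subset\cdots\subset\subset A_N\subset\subset A$. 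For each $i\in\{1,\dots,N\}$ define the glued configuration
\begin{equation*}
v_\e^i(\e x):=\begin{cases} u_\e^A(\e x) & \text{if } \e P_k(x)\in A_i,\\ u_\e^B(\e x) & \text{otherwise,}\end{cases}
\end{equation*}
so that $Pv_\e^i\to u$ strongly in $L^1(A'\cup B)$, because $A'\subset A_i$ and $B\setminus\overline{A_N}\subset D\setminus\overline{A_i}$ for every $i$.

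Next I would split
\begin{equation*}
E_\e(v_\e^i,A'\cup B)\le E_\e(u_\e^A,A)+E_\e(u_\e^B,B)+C\delta+T_\e^i,
\end{equation*}
where the $C\delta$ term bounds the long-range crossing pairs with $|\xi|>L_\delta$ (via \eqref{lrdecay} and the uniform bound $|s_i-s_j|\le\mathrm{diam}(\S)$), and $T_\e^i$ collects the ``crossing'' contributions with $|\xi|\le L_\delta$, i.e.\ pairs $(\a,\a+\xi)$ whose projected endpoints lie on opposite sides of $\partial A_i$. Because $h>L_\delta\e$, any such pair can be a crossing for at most two consecutive indices $i$. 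Applying the triangle inequality
\begin{equation*}
|u_\e^A(\e x_\a)-u_\e^B(\e x_{\a+\xi})|\le |u_\e^A(\e x_\a)-u_\e^A(\e x_{\a+\xi})|+|u_\e^A(\e x_{\a+\xi})-u_\e^B(\e x_{\a+\xi})|
\end{equation*}
to the summand of $T_\e^i$, summing over $i$ and using Lemma \ref{longtoshort} on the first piece (bounded by a constant times $E_\e(u_\e^A,A)$) and reindexing $\beta=\a+\xi$ in the second piece (producing, after absorbing the $\e^{k-1}$ prefactor, an expression of the form $C\e^{-1}\sum_\beta \e^k|u_\e^A(\e x_\beta)-u_\e^B(\e x_\beta)|$ localized to a neighborhood of $\partial A$), I obtain $\sum_{i=1}^N T_\e^i\le C+o_\e(\e^{-1})$, where the $o_\e(\e^{-1})$ comes from Lemma \ref{sumconv}. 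Choosing $N=N_\e\to\infty$ with $N_\e\e\to 0$ slowly enough (e.g.\ $N_\e\sim\e^{-1/2}$) guarantees both $h=\mathrm{dist}(A',\partial A)/N_\e\gg L_\delta\e$ and $(C+o_\e(\e^{-1}))/N_\e\to 0$. By a pigeonhole argument there exists $i^*=i^*(\e)\in\{1,\dots,N_\e\}$ with $T_\e^{i^*}\to 0$. Taking the limsup along $v_\e^{i^*}$ and then letting $\delta\to 0$ gives $E''(u,A'\cup B)\le E''(u,A)+E''(u,B)$.

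The main obstacle is the delicate balancing in the averaging step: the layer thickness $h$ must exceed $L_\delta\e$ so that each pair crosses only a bounded number of interfaces, yet $N_\e\to\infty$ must be large enough to dominate the $L^1$-rate produced by Lemma \ref{sumconv}. A secondary subtlety is that the slicing is performed in the $k$-dimensional projection, while interactions live on the full $d$-dimensional lattice $\L$; the counting of crossings therefore uses $|P_k(\xi)|$ rather than $|\xi|$, which interacts nontrivially with the decay estimate \eqref{lrdecay} and is handled by the truncation at $L_\delta$ together with Hypothesis 1.
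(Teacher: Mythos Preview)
Your overall strategy---De Giorgi layering and averaging---matches the paper's, but the scaling you impose on $N_\e$ is wrong and breaks the argument as written.

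You require $N_\e\e\to 0$ (e.g.\ $N_\e\sim\e^{-1/2}$) and claim $(C+o_\e(\e^{-1}))/N_\e\to 0$. Write the $o_\e(\e^{-1})$ term as $\e^{-1}b_\e$, where $b_\e:=\sum_\beta \e^k|u_\e^A-u_\e^B|\to 0$ is exactly the quantity controlled by Lemma~\ref{sumconv}. Then
\[
\frac{C+o_\e(\e^{-1})}{N_\e}=\frac{C}{N_\e}+\frac{b_\e}{N_\e\e},
\]
and if $N_\e\e\to 0$ the second summand is of indeterminate form: Lemma~\ref{sumconv} gives no rate on $b_\e$, so with $N_\e\sim\e^{-1/2}$ the term $b_\e\e^{-1/2}$ may well diverge. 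There is in fact no ``delicate balancing'' here: bounded crossings only need $h>(L_\delta+2r)\e$, not $h\gg L_\delta\e$. The paper takes $N_\e=\lfloor d'/(\e(L_\delta+2r))\rfloor$, so that $N_\e\e$ is a \emph{positive constant} depending only on $\delta$; then both $C/N_\e\to 0$ and $b_\e/(N_\e\e)\to 0$ follow directly.

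A secondary gap: you omit what the paper does first (its Step~1), namely extending both recovery sequences to a large cube $Q_D\supset\overline D$ with equibounded energy there. The paper uses this to control the long-range crossings ($|\xi|>L_\delta$) via Lemma~\ref{longtoshort} applied on the convex set $Q_D$ to the glued configuration $w_\e^j$ itself. Your alternative ``$\le C\delta$ via \eqref{lrdecay} and $|s_i-s_j|\le\mathrm{diam}(\S)$'' tacitly relies on a counting estimate of the form $\e^{k-1}\#\{\text{crossings for }\xi,i\}\le C|\xi|$ uniformly in $i$; this in turn requires a uniform bound on $\mathcal H^{k-1}(\partial A_i)$, which is not automatic for distance-level sets of a merely Lipschitz set and should be justified, or bypassed via the extension step.
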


\begin{proof}
We may assume that $E^{\prime\prime}(u,A)$ and $E^{\prime\prime}(u,B)$ are both finite. Let $u_{\e},v_{\e}:\e\L\to \S$ both converge to $u$ in the sense of Definition \ref{defconv} such that	
\begin{equation}\label{optimalsequences}
\limsup_{\e\to 0}E_{\e}(u_{\e},A)=E^{\prime\prime}(u,A),\quad\limsup_{\e\to 0}E_{\e}(v_{\e},B)=E^{\prime\prime}(u,B).
\end{equation}
\textbf{Step 1} Extensions to convex domains
\\
Let $Q_D$ be a cube containing $\overline{D}$. Since $D\in\Ard$, we can extend $u$ (without relabeling) to a function $u\in BV_{{\rm loc}}(\mathbb{R}^k,\S)$. We first show that we can modify $u_{\e}$ and $v_{\e}$ on $\e\L\backslash A$ and $\e\L\backslash B$ respectively, such that they converge to $u$ on $L^1(Q_D)$ and such that they have equibounded energy on the larger set $Q_D$. We will show the argument for $u_{\e}$. Take another cube $Q^{\prime}$ such that $Q_D\subset\subset Q^{\prime}$. Arguing as in the proof of Proposition \ref{ub} we find a sequence $\tilde{u}_{\e}:\e\L\to\S$ such that $\tilde{u}_{\e}\to u$ on $Q^{\prime}$ and $\limsup_{\e\to 0}E_{\e}(\tilde{u}_{\e},Q^{\prime})\leq C\mathcal{H}^{k-1}(S_u\cap Q^{\prime})$. We then set $\bar{u}\in\mathcal{PC}_{\e}(\L)$ as 
\begin{equation*}
\bar{u}(\e x)=\mathds{1}_A(P_k(\e x))u_{\e}(\e x)+(1-\mathds{1}_A(P_k(\e x)))\tilde{u}_{\e}(\e x).
\end{equation*}
Then $\bar{u}_{\e}\to u$ on $Q_D$ and applying Lemma \ref{longtoshort} combined with Hypothesis 1 and (\ref{thickness}) yields
\begin{align*}
E_{\e}(\bar{u}_{\e},Q_D)\leq& C\sum_{\xi\in\rzd_M}J_{lr}(|\hat{\xi}|)|\xi|\sum_{\substack{(x,y)\in\NNS\\ \e x,\e y\in Q^{\prime}}}\e^{k-1}f_{\e}(x,y,\bar{u}_{\e}(\e x),\bar{u}_{\e}(\e y))
\\
\leq &C\left(E_{\e}(u_{\e},A)+E_{\e}(\tilde{u}_{\e},Q^{\prime}\backslash A)+\frac{1}{\e}|\partial A+B_{4R\e}(0)|\right).
\end{align*}
The first and second term remain bounded by construction, while the third term converges to a multiple of the Minkowski content of $\partial A$ which agrees with $\mathcal{H}^{k-1}(\partial A)$ as $A\in\Ard$.
\\
\textbf{Step 2} Energy estimates
\\
Again, given $\delta>0$ we choose $L_{\delta}$ such that (\ref{lrdecay}) holds. Fix $d^{\prime}\leq\frac{1}{2}{\rm dist}(A^{\prime},\partial A)$ and let $N_{\e}:=\lfloor\frac{d^{\prime}}{\e (L_{\delta}+2r)}\rfloor$, where $\lfloor\cdot\rfloor$ denotes the integer part. For $j\in\mathbb{N}$ we define
\begin{equation*}
A_{\e,j}:=\{x\in A:\;{\rm dist}(x,A^{\prime})<j\e(L_{\delta}+2r)\}.
\end{equation*}
We let $w^j_{\e}\in \mathcal{PC}_{\e}(\L)$ be the interpolation defined by
\begin{equation*}
w^j_{\e}(\e x)=\mathds{1}_{A_{\e,j}}(P_k(\e x))u_{\e}(\e x)+(1-\mathds{1}_{A_{\e,j}}(P_k(\e x)))v_{\e}(\e x).
\end{equation*}
Note that for each fixed $j\in\mathbb{N}$, $w^j_{\e}\to u$ on $D$ in the sense of Definition \ref{defconv}. We set
\begin{equation*}
S_j^{\xi,\e}:=\{x=y+t\,P_k(\xi^{\prime}):\;y\in\partial A_{\e,j},\,|t|\leq\e,\xi^{\prime}\in\xi+[-r^{\prime},r^{\prime}]^d\}\cap (A\cup B).
\end{equation*}
For $j\leq N_{\e}$ we have 
\begin{align}\label{almostsubeq}
E_{\e}(w^j_{\e},A^{\prime}\cup B)\leq& E_{\e}(u_{\e},A_{\e,j})+E_{\e}(v_{\e},B\backslash A_{\e,j})\nonumber
\\
&+\sum_{\xi\in\rzd_M}\sum_{\a\in R^{\xi}_{\e}(S_j^{\xi,\e})}\underbrace{\e^{k-1}f_{\e}(x_{\alpha},x_{\a+\xi},w^j_{\e}(\e x_{\a}),w^j_{\e}(\e x_{\a+\xi}))}_{=:\rho_j^{\xi,\e}(\a)}\nonumber
\\
\leq &E_{\e}(u_{\e},A)+E_{\e}(v_{\e},B)
+\sum_{\xi\in\rzd_M}\sum_{\a\in R^{\xi}_{\e}(S_j^{\xi,\e})}\rho_j^{\xi,\e}(\a).
\end{align}
We now distinguish between two types of interactions depending on $L_{\delta}$. If $|\xi|>L_{\delta}$, we use Lemma \ref{longtoshort}. Since $A\cup B\subset\subset Q_D$, we deduce that 
\begin{equation*}
\sum_{|\xi|>L_{\delta}}\sum_{\a\in R^{\xi}_{\e}(S_j^{\xi,\e})}\rho_j^{\xi,\e}(\a)\leq C\sum_{|\xi|>L_{\delta}}J_{lr}(|\hat{\xi}|)|\xi|\sum_{\substack{(x,y)\in\NNS\\ \e x,\e y\in P_k^{-1}Q_D}}\e^{k-1}f_{\e}(x,y,w^j_{\e}(\e x),w^j_{\e}(\e y)).
\end{equation*}
We have $P_k^{-1}Q_D\subset P_k^{-1}A_{\e,j}\cup P_k^{-1}(Q_D\backslash A_{\e,j})$. Nearest-neighbour interactions between those two sets are contained in $P_k^{-1}S_{k}^{\xi,\e}$ for some $\xi\in\rzd_M$ with $|\xi|\leq 4R$. Therefore, we can further estimate the last inequality via
\begin{equation}\label{longrangeest}
\sum_{|\xi|>L_{\delta}}\sum_{\a\in R^{\xi}_{\e}(S_j^{\xi,\e})}\rho_j^{\xi,\e}(\a)\leq C\delta\Big(E_{\e}(u_{\e},A)+E_{\e}(v_{\e},Q_D)
+\sum_{|\xi|\leq L_{\delta}}\sum_{\a\in R^{\xi}_{\e}(S_j^{\xi,\e})}\rho_j^{\xi,\e}(\a)\Big).
\end{equation}
Now we treat the interactions when $|\xi|\leq L_{\delta}$. Consider any points $\e x,\e y\in\e\L$. If $w^j_{\e}(\e x)\neq w^j_{\e}(\e y)$ then either $\e x,\e y\in A_{\e,j}$, $\e x,\e y\notin A_{\e,j}$ or $\e x\in A_{\e,j}$ but $\e y\notin A_{\e,j}$ (the reverse case can be treated similar). In the last case we have a contribution only if $u_{\e}(\e x)\neq v_{\e}(\e y)$. Then either $u_{\e}(\e y)=v_{\e}(\e y)$ or $f_{\e}(x,y,u_{\e}(\e x),v_{\e}(\e y))\leq C|u_{\e}(\e y)-v_{\e}(\e y)|$. Summarizing all cases we obtain the inequality
\begin{align*}
\rho_j^{\xi,\e}(\a)\leq &\e^{k-1}f_{\e}(x,y,u_{\e}(\e x),u_{\e}(\e y))+\e^{k-1}f_{\e}(x,y,v_{\e}(\e x),v_{\e}(\e y))+C\e^{k-1}|u_{\e}(\e y)-v_{\e}(\e y)|.
\end{align*}
By our construction we have $S_j^{\e,\xi}\subset (A_{\e,j+1}\backslash A_{\e,j-1})=: S_j^{\e}$. We deduce that
\begin{equation*}
\sum_{|\xi|\leq L_{\delta}}\sum_{\a\in R^{\xi}_{\e}(S_j^{\xi,\e})}\rho_j^{\xi,\e}(\a)
\leq E_{\e}(u_{\e},S_j^{\e})+E_{\e}(v_{\e},S_j^{\e})
+C_{\delta}\sum_{\substack{y\in\L \\ \e P_k(y)\in S_j^{\e}}}\e^{k-1}|u_{\e}(\e y)-v_{\e}(\e y)|,
\end{equation*}
where $C_{\delta}$ depends only on $L_{\delta}$. Observe that by definition every point can only lie in at most two sets $S_{j_1}^{\e},S_{j_2}^{\e}$. Thus averaging combined with (\ref{longrangeest}), Step 1 and the last inequality yields
\begin{align*}
I_{\e}&:=\frac{1}{N_{\e}}\sum_{j=1}^{N_{\e}}\sum_{\xi\in\rzd_M}\sum_{\a\in R^{\xi}_{\e}(S_j^{\xi,\e})}\rho_j^{\xi,\e}(\a)\leq \frac{2}{N_{\e}}\sum_{j=1}^{N_{\e}}\sum_{|\xi|\leq L_{\delta}}\sum_{\a\in R^{\xi}_{\e}(S_j^{\xi,\e})}\rho_j^{\xi,\e}(\a)+C\delta
\\
&\leq \frac{4}{N_{\e}}\left(E_{\e}(u_{\e},Q_D)+E_{\e}(v_{\e},Q_D)\right)+C_{\delta}\sum_{\substack{y\in\L \\ \e P_k(y)\in D}}\e^{d}|u_{\e}(\e y)-v_{\e}(\e y)|+C\delta
\\
&\leq\frac{C}{N_{\e}}+C_{\delta}\sum_{\substack{y\in\L \\ \e P_k(y)\in D}}\e^{d}|u_{\e}(\e y)-v_{\e}(\e y)|+C\delta.
\end{align*}
Due to Step 1 we can apply Lemma \ref{sumconv} to deduce that $\limsup_{\e\to 0}I_{\e}\leq C\delta$. For every $\e>0$, let $j_{\e}\in\{1,\dots,N_{\e}\}$ be such that
\begin{equation}\label{boundrho}
\sum_{\xi\in\rzd_M}\sum_{\a\in R^{\xi}_{\e}(S_{j_{\e}}^{\xi,\e})}\rho_{j_{\e}}^{\xi,\e}(\a)\leq I_{\e}
\end{equation}
and set $w_{\e}:=w_{\e}^{j_{\e}}$. Note that, as a convex combination, $w_{\e}$ still converges to $u$ on $D$. Hence, using (\ref{almostsubeq}) and (\ref{boundrho}), we conclude that
\begin{equation*}
E^{\prime\prime}(u,A^{\prime}\cup B)\leq\limsup_{\e\to 0}E_{\e}(w_{\e},A^{\prime}\cup B)\leq E^{\prime\prime}(u,A)+E^{\prime\prime}(u,B)+C\,\delta.
\end{equation*}
The arbitrariness of $\delta$ proves the claim.
\end{proof}

\begin{proof}[Proof of Theorem \ref{mainthm1}] From Propositions \ref{ub} and \ref{subadditive} it follows by standard arguments that $E^{\prime\prime}(u,\cdot)$ is inner regular on $\Ard$ (see, for example, Proposition 11.6 in \cite{BrDe}). Therefore, given a sequence $\e_n\to 0^+$ we can use Remark \ref{almostgamma} and the compactness property of $\Gamma$-convergence (see \cite{GCB} Section 1.8.2) to construct a subsequence $\e_n$ (not relabeled) such that	
\begin{equation*}
\Gamma\hbox{-}\lim_n E_{\e_n}(u,A)=:\tilde{E}(u,A)
\end{equation*}	
exists for every $(u,A)\in\lud\times\Ard$. By Proposition \ref{lb} we know that $\tilde{E}(u,A)$ is finite only if $u\in BV(A,\S)$. We extend $\tilde{E}(u,\cdot)$ to $\mathcal{A}(D)$ setting
\begin{equation*}
E(u,A):=\sup\,\{\tilde{E}(u,A^{\prime}):\;A^{\prime}\subset\subset A,\, A^{\prime}\in\Ard\}.
\end{equation*}	
To complete the proof, it is enough to show that $E$ satisfies the assumptions of Theorem \ref{represent}. Again by standard arguments $E(u,\cdot)$ fulfills the assumptions of the De Giorgi-Letta criterion (\cite{GCB} Section 16) so that $E(u,\cdot)$ is the trace of a Borel measure. By Proposition \ref{ub}, it is indeed a Radon measure. The locality property follows from Remark \ref{almostgamma}. By the properties of $\Gamma$-limits and again Remark \ref{almostgamma} we know that $\tilde{E}(\cdot,A)$ is $\lud$-lower semicontinuous and so is $E(\cdot,A)$ as the supremum of lower semicontinuous functions. The growth conditions (iv) in Theorem \ref{represent} follow from Propositions \ref{lb} and \ref{ub} which still hold for $E$ in place of $\tilde{E}$. The local version of the theorem is a direct consequence of our construction.
\end{proof}

\section{Convergence of boundary value problems}\label{bvproblems}
In this section we consider the convergence of minimum problems with Dirichlet-type boundary data. In order to model boundary conditions in our discrete setting we need to introduce a suitable notion of trace taking into account possible long range interactions (see also \cite{ACR}). In what follows we will further assume a continuous spatial dependence of the integrand of the limit continuum energy. Without such a condition we can still obtain a weaker result stated in Lemma \ref{approxminprob}. On the other hand continuity assumptions are always fulfilled in the case of the homogenization problem that we are going to treat in Section \ref{homogenization}.
\\
\hspace*{0,5cm}
Consider $A\in\Ard$ and fix boundary data $u_0\in BV(\R^k_{{\rm loc}},\S)$. We assume that the boundary data are well-prepared in the sense that, setting $u_{\e,0}\in \mathcal{PC}_{\e}(\L)$ as $u_{\e,0}(\e x)=u_0(P_k(\e x))$, we have $u_{\e,0}\to u_0$ on $D$ and
\begin{equation}\label{nojump}
\limsup_{\e\to 0}E_{\e}(u_{\e,0},B)\leq C\mathcal{H}^{k-1}(S_{u_0}\cap \overline{B}),\quad\quad\mathcal{H}^{k-1}(S_{u_0}\cap\partial A)=0.
\end{equation}
with $C$ independent of $B\in\mathcal{A}^R(\R^k)$. Observe that as in the proof of Proposition \ref{ub} we may allow for any polyhedral function such that $\mathcal{H}^{k-1}(S_{u_0}\cap\partial A)=0$, but more generally it suffices that all level sets are Lipschitz sets.
\\
\hspace*{0.5cm}
We define a {\em discrete trace constraint} as follows: Let $l_{\e}>0$ be such that
\begin{equation}\label{slowboundary}
\lim_{\e\to 0}l_{\e}=+\infty,\quad \lim_{\e\to 0}l_{\e}\e= 0.
\end{equation}
We set $\mathcal{PC}_{\e,u_0}^{l_{\e}\e}(\L,A)$ as the space of those $u$ that agree with $u_{0}$ at the discrete boundary of $A$, by setting
\begin{equation*}
\mathcal{PC}_{\e,u_0}^{l_{\e}\e}(\L,A):=\{u:\e\L\to\S:\;u(\e x)=u_{0}( P_k(\e x))\text{ if }\dist(P_k(\e x),\partial A)\leq l_{\e}\e\}.
\end{equation*} 
For $\e>0$ and $l_{\e}>0$ we consider the restricted functional $E^{l_{\e}}_{\e,u_0}(\cdot,A):\mathcal{PC}_{\e,u_0}^{l_{\e}\e}(\L,A)\to[0,+\infty]$ defined as
\begin{equation}\label{boundarytype}
E^{l_{\e}}_{\e,u_0}(u,A):=
{E}_{\e}(u,A).
\end{equation}
We need some further notation. Given $u\in BV(D,\S)$, we set $u_{A,0}:\mathbb{R}^k\to\S$ as
\begin{equation*}
u_{A,0}(x):=
\begin{cases}
u(x) &\mbox{if $x\in A$,}\\
u_{0}(x) &\mbox{otherwise.}
\end{cases}
\end{equation*}
Since $A$ is regular we have $u_{A,0}\in BV_{loc}(\mathbb{R}^k,\S)$. The following convergence result holds:

\begin{theorem}\label{constrainedproblem}
Let $\L$ be a thin admissible lattice and let $f_{nn}^{\e}$ and $f_{lr}^{\e}$ satisfy Hypothesis 1. For every sequence converging to $0$, let $\e_{n}$ and $\phi$ be as in Theorem \ref{mainthm1}. Assume that the limit integrand $\phi$ is continuous on $D\times\S^2\times S^{k-1}$. Then, for every set $A\in\Ard$, $A\subset\subset D$, the functionals $E^{l_{\e_n}}_{\e_n,u_0}(\cdot,A)$ defined in (\ref{boundarytype}) $\Gamma$-converge with respect to the convergence on $A$ in Definition \ref{defconv} to the functional $E_{u_0}(\cdot,A):L^1(D,\R^q)\to [0,+\infty]$ that is finite only for $u\in BV(A,\S)$, where it takes the form
\begin{equation*}
E_{u_0}(u,A)=
\int_{S_{u_{A,0}}\cap \overline{A}}\phi(x,u_{A,0}^+,u_{A,0}^-,\nu_{u_{A,0}})\,\mathrm{d}\Hk.
\end{equation*}
\end{theorem}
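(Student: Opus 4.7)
The strategy is to reduce the constrained $\Gamma$-convergence to the unconstrained result of Theorem \ref{mainthm1} by extending every competitor outside $A$ using the discretized boundary datum $u_{\e,0}$. The trace condition $u_\e\equiv u_{0}\circ P_k$ on the inner layer of width $l_\e\e\to 0$ guarantees that such extensions converge, in $L^1$ of any neighbourhood of $\overline A$, to $u_{A,0}$ without producing any spurious discrete interface along $\partial A$. The continuity of $\phi$ will be used only at the last step of the upper bound, to control the cost of a thin transition shell inside $A$.

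For the liminf inequality, take $u_{\e_n}\in\mathcal{PC}^{l_{\e_n}\e_n}_{\e_n,u_0}(\L,A)$ with $u_{\e_n}\to u$ on $A$, and extend it by $u_{\e_n,0}$ outside $P_k^{-1}A$ to a sequence $\tilde u_{\e_n}$. For every $A''\in\mathcal{A}^R(\R^k)$ with $A\subset\subset A''$, one has $\tilde u_{\e_n}\to u_{A,0}$ on $A''$, and splitting $E_{\e_n}(\tilde u_{\e_n},A'')$ into pairs with both endpoints in $P_k^{-1}A$, both in $P_k^{-1}(A''\setminus A)$, or mixed, the first block equals $E_{\e_n}(u_{\e_n},A)$, the second is bounded by $E_{\e_n}(u_{\e_n,0},A''\setminus \overline A)$, and the mixed one is dominated, via Lemma \ref{longtoshort} and Remark \ref{flatdeday}, by a constant times the energy of $u_{\e_n,0}$ on an $O(\e_n)$-thickening of $\partial A$. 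Together with (\ref{nojump}) and the localized liminf inequality of Theorem \ref{mainthm1} on $A''$ this yields
\[
\int_{S_{u_{A,0}}\cap A''}\phi(x,u_{A,0}^+,u_{A,0}^-,\nu_{u_{A,0}})\,d\Hk\leq \liminf_nE_{\e_n}(u_{\e_n},A)+C\,\Hk\bigl(S_{u_0}\cap(\overline{A''}\setminus A)\bigr).
\]
Letting $A''\searrow\overline A$ along sets with $\Hk(S_{u_{A,0}}\cap\partial A'')=0$, and using $\Hk(S_{u_0}\cap\partial A)=0$, completes the lower bound.

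For the limsup, fix $\delta>0$, set $A_\delta:=A+B_\delta(0)\in\Ard$, and let $v_{\e_n}\to u_{A,0}$ on $A_\delta$ be a recovery sequence produced by Theorem \ref{mainthm1}, so that $\limsup_nE_{\e_n}(v_{\e_n},A_\delta)\leq\int_{S_{u_{A,0}}\cap\overline{A_\delta}}\phi\,d\Hk$. To enforce the discrete trace constraint, mimic the shell averaging of Proposition \ref{subadditive}: with $A_{\e_n,j}:=\{x\in A:\dist(x,\partial A)>j(L_\delta+2r)\e_n\}$ set
\[
w_{\e_n}^j(\e x):=\mathds{1}_{A_{\e_n,j}}(P_k(\e x))\,v_{\e_n}(\e x)+\bigl(1-\mathds{1}_{A_{\e_n,j}}(P_k(\e x))\bigr)\,u_{\e_n,0}(\e x).
\]
For $j\geq l_{\e_n}/(L_\delta+2r)+C$ the function $w_{\e_n}^j$ fulfills the trace condition. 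Averaging over $j\in\{j_0,\dots,N_{\e_n}\}$ with $N_{\e_n}\sim\delta/\e_n$ and picking $j_{\e_n}$ of minimal transition cost yields $w_{\e_n}:=w_{\e_n}^{j_{\e_n}}\to u$ on $A$ with
\[
\limsup_n E^{l_{\e_n}}_{\e_n,u_0}(w_{\e_n},A)\leq\int_{S_{u_{A,0}}\cap\overline{A_\delta}}\phi\,d\Hk+C\delta,
\]
and sending $\delta\to 0$ closes the argument.

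The main obstacle lies in this last shell-averaging: in Proposition \ref{subadditive} both interpolated sequences converged to the same $L^1$-limit, which, via Lemma \ref{sumconv}, killed the discrete $L^1$-contribution across the shell. Here $v_{\e_n}$ and $u_{\e_n,0}$ converge to different limits inside $A$, and this is exactly where the continuity of $\phi$ becomes indispensable: the $\Gamma$-limit measures associated with $E_{\e_n}(v_{\e_n},\cdot)$ and $E_{\e_n}(u_{\e_n,0},\cdot)$ have continuous densities with respect to the rectifiable measures carried by $S_{u_{A,0}}$ and $S_{u_0}$ respectively, so by a Fubini-type selection one can find a shell on which both limits are small, making the replacement cost $O(\delta)$ and, together with $\Hk(S_{u_0}\cap\partial A)=0$, absorbing all boundary contributions.
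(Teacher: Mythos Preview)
Your lower bound follows the paper's argument closely and is essentially correct, modulo a small imprecision: the mixed interactions with $|\xi|>L_\delta$ are not concentrated on an $O(\e_n)$-thickening of $\partial A$; rather, Lemma~\ref{longtoshort} bounds them by $C\delta$ times the nearest-neighbour energy of $\tilde u_{\e_n}$ on a large cube, which is uniformly bounded. The paper handles this by introducing an inner set $A_1\subset\subset A$ and bounding the short-range mixed terms by $E_{\e}(u_{\e,0},A_2\setminus\overline{A_1})$, then letting $A_1\uparrow A$.

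The upper bound, however, has a genuine gap. In the shell-averaging of Proposition~\ref{subadditive} the crucial estimate is
\[
\rho^{\xi,\e}_j(\alpha)\le \e^{k-1}f_\e(\cdot,\cdot,v_\e,v_\e)+\e^{k-1}f_\e(\cdot,\cdot,u_{\e,0},u_{\e,0})+C\e^{k-1}\,|v_\e(\e y)-u_{\e,0}(\e y)|,
\]
and after averaging over $j$ the last term becomes $\dfrac{C_\delta}{d'}\sum_{\e P_k(y)\in\text{annulus}}\e^{k}|v_\e-u_{\e,0}|$, where $d'$ is the total width of the annulus. Since your shells lie \emph{inside} $A$, where $v_\e\to u$ and $u_{\e,0}\to u_0$, this sum converges to $\dfrac{C_\delta}{d'}\|u-u_0\|_{L^1(\text{annulus})}$, which in general is of order~$1$, not $o(1)$: the $1/d'$ exactly cancels the width of the annulus. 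Your proposed ``Fubini-type selection'' targets the wrong term --- it would control the two energy contributions $E_\e(v_\e,S_j^\e)$ and $E_\e(u_{\e,0},S_j^\e)$, but the obstruction is the discrete $L^1$-difference, which is not governed by $\phi$ and cannot be made small by choosing a shell away from $S_{u_{A,0}}$ or $S_{u_0}$.

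The paper circumvents this by a two-step argument. First, one proves the upper bound only for $u$ with $u=u_0$ in a neighbourhood of $\partial A$; on the annulus the recovery sequence for $u$ and $u_{\e,0}$ then converge to the \emph{same} limit, so Lemma~\ref{sumconv} applies and your gluing works verbatim. Second, for general $u\in BV(A,\S)$ one invokes Lemma~\ref{strictinterior} to produce approximants $u_n$ equal to $u_0$ near $\partial A$ with $u_n\to u_{A,0}$ in $L^1(B)$ and $\mathcal{H}^{k-1}(S_{u_n}\cap B)\to\mathcal{H}^{k-1}(S_{u_{A,0}}\cap B)$ on a slightly larger set $B\supset\supset A$. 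This is where the continuity of $\phi$ enters: it guarantees a Reshetnyak-type continuity of $v\mapsto\int_{S_v\cap B}\phi\,d\mathcal{H}^{k-1}$ along such strictly converging sequences, yielding $E(u_n,B)\to E(u_{A,0},B)$ and hence the upper bound after letting $B\downarrow\overline A$. So continuity of $\phi$ is used for the density step, not for the shell selection.
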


\begin{proof}
By Proposition \ref{lb} we know that the limit energy is finite only for $u\in BV(A,\S)$. To save notation, we replace the subsequence $\e_n$ again by $\e$. \smallskip\\		
\noindent {\bf Lower bound:}	
Without loss of generality let $u_{\e}\to u$ on $A$ in the sense of Definition \ref{defconv} be such that 
\begin{equation}\label{finiteenergy}
\liminf_{\e} E^{l_{\e}}_{\e,u_0}(u_{\e},A)\leq C.
\end{equation}	
Passing to a subsequence, we may assume that $u_{\e}\in \mathcal{PC}_{\e,u_0}^{l_{\e}\e}(\L,A)$. We define a new sequence $v_{\e}:\e\L\to\S$ by	
\begin{equation*}
v_{\e}(\e x)=\mathds{1}_{A}( P_k(\e x))u_{\e}(\e x)+(1-\mathds{1}_{A}( P_k(\e x)))u_{0}(\e P_k(x)).
\end{equation*}	
Note that  by our assumptions on $u_0$ we have $v_{\e} \to u_{A,0}$ on $D$ in the sense of Definition \ref{defconv}. Now fix $A_1\subset\subset A\subset\subset A_2$ such that $A_1,A_2\in\Ard$. Setting	
\begin{equation*}
S^{\xi,\e}:=\{\alpha\in R^{\xi}_{\e}(A_2):\;\e x_{\a}\in P_k^{-1}A,\,\e x_{\a+\xi}\notin P_k^{-1}A\text{ or vice versa}\},
\end{equation*}
it holds that 	
\begin{align}
E_{\e}(v_{\e},A_2)\leq& E_{\e,u_0}^{l_{\e}}(u_{\e},A)+E_{\e}(u_{\e,0},A_2\backslash \overline{A_1})\nonumber
\\
&+\sum_{\xi\in\rzd_M}\sum_{\a\in S^{\xi,\e}}\e^{k-1}f_{\e}(x_{\alpha},x_{\a+\xi},v_{\e}(\e x_{\alpha}),v_{\e}(\e x_{\a+\xi})),\label{fundestimate}
\end{align}
Given $\delta>0$, let $L_{\delta}>0$ be such that (\ref{lrdecay}) holds. To bound the long-range interactions, we fix again a large cube $Q_D$ containing $\overline{D}$. Then Lemma \ref{longtoshort} and the coercivity assumption in Hypothesis 1 yield
\begin{align}\label{neglectlr}
\sum_{|\xi|>L_{\delta}}\sum_{\a\in S^{\xi,\e}}&\e^{k-1}f_{\e}(x_{\a},x_{\a+\xi},v_{\e}(\e x_{\a}),v_{\e}(\e x_{\a+\xi}))\leq C\sum_{|\xi|>L_{\delta}}J_{lr}(|\hat{\xi}|)|\xi|\sum_{\substack{(x,y)\in\mathcal{N\!N}(\L)\\ \e x,\e y\in P_k^{-1}Q_D}}\e^{k-1}f_{\e}(x,y,v_{\e}(\e x),v_{\e}(\e y))\nonumber
\\
&\leq C\delta\bigg(E_{\e}(u_{\e},A)+E_{\e}(u_{\e,0},Q_D)+\sum_{|\xi|\leq L_{\delta}}\sum_{\a\in S^{\xi,\e}}\e^{k-1}f_{\e}(x_{\a},x_{\a+\xi},v_{\e}(\e x_{\a}),v_{\e}(\e x_{\a+\xi}))\bigg) .
\end{align}	
For interactions with $|\xi|\leq L_{\delta}$ and $\e$ small enough, we have that $S^{\xi,\e}\subset A_2\backslash\overline{A_1}$. Moreover, if $l_{\e}>L_{\delta}+2r$, then by the boundary conditions on $u_{\e}$ we get
\begin{equation*}
\sum_{|\xi|\leq L_{\delta}}\sum_{\a\in S^{\xi,\e}}\e^{k-1}f_{\e}(x_{\alpha},x_{\a+\xi},v_{\e}(\e x_{\alpha}),v_{\e}(\e x_{\a+\xi}))
\leq E_{\e}(u_{\e,0},A_2\backslash\overline{A_1}).
\end{equation*}	
From the local version of Theorem \ref{mainthm1}, (\ref{nojump}), (\ref{finiteenergy}), (\ref{fundestimate}) and (\ref{neglectlr}) we infer	
\begin{equation*}
E(u_{A,0},A_2)\leq\liminf_{\e}E^{l_{\e}}_{\e,u_0}(u_{\e},A)+C\,\delta(1+\mathcal{H}^{d-1}(S_{u_0}\cap \overline{Q_D}))+C\mathcal{H}^{d-1}(S_{u_0}\cap \overline{A_2}\backslash A_1).
\end{equation*}	
The lower bound follows by letting $A_2\downarrow\overline{A}$ and $A_1\uparrow A$ combined with (\ref{nojump}) and the arbitrariness of $\delta$.\smallskip
\\
\noindent {\bf Upper bound:}
We first provide a recovery sequence in the case when $u=u_{0}$ in a neighbourhood of $\partial A$. Let $u_{\e}:\e\L\to\S$ converge to $u$ on $D$ in the sense of Definition \ref{defconv} and be such that	
\begin{equation}\label{rec}
\lim_{\e\to 0}E_{\e}(u_{\e},A)=E(u,A).
\end{equation}
Again, given $\delta>0$ we let $L_{\delta}>0$ be such that (\ref{lrdecay}) holds. Now choose regular sets $A_1\subset\subset A_2\subset\subset A$ such that	
\begin{equation}	
u=u_{0}\quad\text{on }A\backslash \overline{A_1}.\label{bordo}
\end{equation}	
The remaining argument is similar to the proof of Proposition \ref{subadditive} and therefore we only sketch it. Fix $d^{\prime}\leq\frac{1}{2}\dist(A_1,\partial A_2)$ and set $N_{\e}=\lfloor\frac{d^{\prime}}{\e( L_{\delta}+2r)}\rfloor$. For $j\in\mathbb{N}$ we define the sets	
\begin{equation*}
A_{\e,j}:=\{x\in A:\;\dist(x,A_1)<j\e (L_{\delta}+2r)\}.
\end{equation*}	
We further define $u^j_{\e}:\e\L\to\S$ setting	
\begin{equation*}
u^j_{\e}(\e x)=
\begin{cases}
u_{0}(\e x) &\mbox{if $P_k(\e x)\notin A_{\e,j}$,}\\
u_{\e}(\e x) &\mbox{otherwise.}
\end{cases}
\end{equation*}	
It holds that 
\begin{align*}
E_{\e}(u^j_{\e},A)\leq &E_{\e}(u_{\e},A)+E_{\e}(u_{\e,0},A\backslash \overline{A_1})
+\sum_{\xi\in\rzd_M}\e^{k-1}\sum_{\a\in R^{\xi}_{\e}(S_j^{\xi,\e})}f_{\e}(x_{\a},x_{\a+\xi},u^j_{\e}(\e x_{\a}),u^j_{\e}(\e x_{\a+\xi})),
\end{align*}	
where the set $S_j^{\xi,\e}$ is defined as 
\begin{equation*}
S_j^{\xi,\e}:=\{x=y+t\,P_k(\xi^{\prime}):\;y\in\partial A_{\e,j},\,|t|\leq\e,\,\xi^{\prime}\in\xi+[-r^{\prime},r^{\prime}]^d\}\cap A.
\end{equation*}	
As for (\ref{neglectlr}), using (\ref{nojump}) and (\ref{rec}) we can show that	
\begin{align*}
\sum_{\xi\in\rzd_M}&\e^{k-1}\sum_{\a\in R^{\xi}_{\e}(S_j^{\xi,\e})}f_{\e}(x_{\a},x_{\a+\xi},u^j_{\e}(\e x_{\a}),u^j_{\e}(\e x_{\a+\xi}))
\\
&\leq C\delta+C\sum_{|\xi|\leq L_{\delta}}\sum_{\a\in R^{\xi}_{\e}(S_j^{\xi,\e})}\e^{k-1}f_{\e}(x_{\a},x_{\a+\xi},u^j_{\e}(\e x_{\a}),u^j_{\e}(\e x_{\a+\xi})).
\end{align*}
To estimate the interactions where $|\xi|\leq L_{\delta}$, note that due to (\ref{bordo}) we can use the averaging technique like in Step $2$ of Proposition \ref{subadditive} to obtain $j_{\e}\in\{1,\dots,N_{\e}\}$ and the corresponding sequence $u^{j_{\e}}_{\e}$ satisfying the boundary conditions (at least for small $\e$ because of (\ref{slowboundary})) such that	
\begin{equation*}
\limsup_n E^{l_{\e}}_{\e,u_0}(u^{j_{\e}}_{\e},A)\leq E(u,A)+C\Hk(S_{u_0}\cap (\overline{A}\backslash A_1))+C\delta,
\end{equation*}
where we used (\ref{nojump}). Moreover, due to the assumptions on $u_0$ and (\ref{bordo}) we know that $u_{\e}^{j_{\e}}\to u$ on $A$. Letting first $\delta\to 0$ and then $A_1\uparrow A$ we finally get	
\begin{equation*}
\Gamma\hbox{-}\limsup_{\e} E^{l_{\e}}_{\e,u_0}(u,A)\leq E(u,A)=E_{u_0}(u,A).
\end{equation*}	
\hspace*{0,5cm}
For a general function $u\in BV(A,\S)$ we argue by approximation. To this end we take any $B\in\Ard$ such that $A\subset\subset B$. By Lemma \ref{strictinterior} we obtain a sequence $u_n\in BV(D,\S)$ such that $u_n=u_0$ in a neighbourhood of $\partial A$ and moreover $u_n\to u_{A,0}$ in $L^1(B)$ and $\mathcal{H}^{k-1}(S_{u_n}\cap B)\to\mathcal{H}^{k-1}(S_u\cap B)$. By $L^1(A)$-lower semicontinuity and the previous argument we obtain
\begin{align*}
\Gamma\hbox{-}\limsup_{\e} E^{l_{\e}}_{\e,u_0}(u,A)\leq \liminf_n E(u_n,A)\leq\liminf_n E(u_n,B)=E(u_{A,0},B).
\end{align*}
In the last step we used the continuity assumption on the integrand and a Reshetnyak-type continuity result for functionals defined on partitions that is proven in \cite{R16}. Letting $B\downarrow\overline{A}$ we obtain the claim.
\end{proof}



\begin{remark}\label{remarkfiniterange}

\begin{itemize}
\item[(i)] It is a direct consequence of our proof, that if we have only finite range of interactions, that is $f^{\e}_{lr}(x,y)=0$ for $|x-y|\geq L$, then it is enough to take $l_{\e}\geq L$.
\item[(ii)] By Remark \ref{almostgamma} the above Theorem \ref{constrainedproblem} implies the usual convergence of minimizers in the spirit of $\Gamma$-convergence. 
\end{itemize}
\end{remark}

Finally we prove an auxiliary result about convergence of boundary value problems that holds without any continuity assumptions. This result will be useful to treat homogenization problems as in Section \ref{homogenization}. To this end we replace the discrete width $l_{\e}$ by a macroscopic value $\eta$ and then take first the limit when $\e\to 0$ and let $\eta\to 0$ in a second step. Given $\eta>0$ and $A\in\Ard$, we set
\begin{equation*}
\partial A_{\eta}=\{x\in A:\;{\rm dist}(x,\partial A)\leq\eta\}.
\end{equation*}
We let $u_0$ be as before. Using a similar notation to that in Theorem \ref{represent} we define the quantities
\begin{equation*}
\begin{split}
&m^{\eta}_{\e}(u_0,A)=\inf\{E_{\e}(v,A):\;v\in\mathcal{PC}_{\e,u_0}^{\eta}(\L,A)\},
\\
&m(u_0,A)=\inf\{E(v,A):\;v=u_0\text{ in a neighbourhood of }\partial A\},
\end{split}
\end{equation*}
where the limit functional $E$ is given (up to subsequences) by Theorem \ref{mainthm1}. Note that the mapping $\eta\mapsto m^{\eta}_{\e}(u_0,A)$ is non-decreasing. Then we have the following weak version of Theorem \ref{constrainedproblem}.
\begin{lemma}\label{approxminprob}
Let $\e_n$ and $E$ be as in Theorem \ref{mainthm1}.	Then it holds that
\begin{equation*}
\lim_{\eta\to 0}\liminf_nm^{\eta}_{\e_n}(u_0,A)=\lim_{\eta\to 0}\limsup_{n}m^{\eta}_{\e_n}(u_0,A)=m(u_0,A).
\end{equation*}
\end{lemma}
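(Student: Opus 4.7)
The monotonicity of $\eta\mapsto m^\eta_\e(u_0,A)$ noted just before the statement guarantees that the two limits in $\eta$ exist, and they coincide with $\inf_{\eta>0}\liminf_n m^\eta_{\e_n}(u_0,A)$ and $\inf_{\eta>0}\limsup_n m^\eta_{\e_n}(u_0,A)$ respectively. The plan is therefore to sandwich $m(u_0,A)$ between these two quantities.

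\textbf{Lower bound.} Fix $\eta>0$. For each $n$ pick $v_{\e_n}^\eta\in\mathcal{PC}_{\e_n,u_0}^{\eta}(\L,A)$ that is almost optimal, i.e.\ $E_{\e_n}(v_{\e_n}^\eta,A)\leq m^\eta_{\e_n}(u_0,A)+1/n$; we may assume $\liminf_n E_{\e_n}(v_{\e_n}^\eta,A)<\infty$ since otherwise there is nothing to prove. Lemma \ref{compactness} yields a not relabelled subsequence with $Pv_{\e_n}^\eta\to v^\eta$ in $L^1(A)$ for some $v^\eta\in BV(A,\S)$. The discrete trace constraint forces $Pv_{\e_n}^\eta=Pu_{\e_n,0}$ on every Voronoi cell of $\e_n P_k\L$ whose centre lies in $\{x\in A:\dist(x,\partial A)\leq\eta\}$, and since $Pu_{\e_n,0}\to u_0$ in $L^1$ this passes to the limit and gives $v^\eta=u_0$ almost everywhere on the inner neighbourhood $\{x\in A:\dist(x,\partial A)<\eta\}$. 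Thus $v^\eta$ is admissible for $m(u_0,A)$, and the $\Gamma$-liminf inequality of Theorem \ref{mainthm1} yields
\begin{equation*}
\liminf_n m^\eta_{\e_n}(u_0,A)\geq E(v^\eta,A)\geq m(u_0,A),
\end{equation*}
after which $\eta\to 0$ concludes the step.

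\textbf{Upper bound.} Fix any $v\in BV(A,\S)$ admissible for $m(u_0,A)$, so that $v=u_0$ on $\{x\in A:\dist(x,\partial A)<\delta_0\}$ for some $\delta_0>0$. The local version of Theorem \ref{mainthm1} provides a recovery sequence $v_\e\to v$ on $A$ with $\lim_\e E_\e(v_\e,A)=E(v,A)$. For any $\eta<\delta_0/2$ and any $\delta>0$ (with associated long-range cut-off $L_\delta$ from (\ref{lrdecay})), I would glue $v_\e$ to $u_{\e,0}$ across a transition annulus contained in $\{x\in A:\eta/2<\dist(x,\partial A)<\eta\}$, running the averaging-over-$N_\e$-layers argument from Step 2 of Proposition \ref{subadditive} with $u_\e=v_\e$ and $v_\e=u_{\e,0}$. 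This produces an index $j_\e$ and a corresponding $\tilde v_\e\in\mathcal{PC}^\eta_{\e,u_0}(\L,A)$ for which the short-range crossing cost is $O(1/N_\e)$, the long-range cost is $O(\delta)$ by (\ref{lrdecay}), and the residual $L^1$-discrepancy is infinitesimal by Lemma \ref{sumconv} (both $v_\e$ and $u_{\e,0}$ converge to $v=u_0$ on the annulus). Combined with the boundary-layer control (\ref{nojump}), this yields
\begin{equation*}
\limsup_n m^\eta_{\e_n}(u_0,A)\leq E(v,A)+C\,\mathcal{H}^{k-1}\!\big(S_{u_0}\cap\{\dist(\cdot,\partial A)\leq\eta\}\big)+C\delta.
\end{equation*}
Letting successively $\delta\to 0$, then $\eta\to 0$ (using $\mathcal{H}^{k-1}(S_{u_0}\cap\partial A)=0$ from (\ref{nojump})), and finally taking the infimum over admissible $v$ concludes.

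\textbf{Main obstacle.} The delicate step is the gluing in the upper bound: one must ensure that replacing $v_\e$ by $u_{\e,0}$ near $\partial A$ does not leak extra interface energy in the limit, and in particular does not fabricate a spurious $(k-1)$-dimensional jump along $\partial A$. The averaging trick is essential for this, but fortunately the macroscopic width $\eta$ of the transition annulus (in contrast with the microscopic width $l_\e\e$ of Theorem \ref{constrainedproblem}), together with the fact that $v$ and $u_0$ already coincide on the whole annulus, makes the construction robust; no continuity of $\phi$ in $x$ and no Reshetnyak-type density argument are required, which is precisely what makes the conclusion available under the bare assumptions of Theorem \ref{mainthm1}.
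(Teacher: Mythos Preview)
Your proof is correct and follows essentially the same route as the paper: compactness plus the $\Gamma$-liminf inequality for the lower bound, and a recovery sequence glued to $u_{\e,0}$ via the averaging-over-layers trick (as in Proposition \ref{subadditive} and the upper bound of Theorem \ref{constrainedproblem}) for the upper bound. The only cosmetic slip is the placement of the transition annulus---with $\{\eta/2<\dist<\eta\}$ the modified sequence lands in $\mathcal{PC}^{\eta/2}_{\e,u_0}$ rather than $\mathcal{PC}^{\eta}_{\e,u_0}$---but since you let $\eta\to 0$ this is immaterial.
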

\begin{proof}
First note that by monotonicity the limits for $\eta\to 0$ are well-defined. Moreover, by the first assumption in (\ref{nojump}) we have that $m^{\eta}_{\e}(u_0,A)$ is equibounded. Now for any $n\in\mathbb{N}$ let $u_n\in\mathcal{PC}^{\eta}_{\e_n,u_0}(\L,A)$ be such that $m^{\eta}_{\e_n}(u_0,A)=E_{\e_n}(u_n,A)$. By Proposition \ref{compactness} we know that, up to a subsequence (not relabeled), $u_n\to u$ on $A$ and by the assumptions on $u_0$ it follows that $u=u_0$ on $\partial A_{\eta}$. Extending $u$ we can assume that $u$ is admissible in the infimum problem defining $m(u_0,A)$ and using Theorem \ref{mainthm1} we obtain
\begin{equation*}
m(u_0,A)\leq E(u,A)\leq\liminf_nE_{\e_n}(u_n,A)\leq\liminf_n m^{\eta}_{\e_n}(u_0,A).
\end{equation*}
Since $\eta$ is arbitrary, we conclude that $m(u_0,A)\leq\lim_{\eta\to 0}\liminf_nm^{\eta}_{\e_n}(u_0,A)$.
\\
\hspace*{0,5cm}
In order to prove the remaining inequality, given $\gamma>0$ we let $u\in BV(A,\S)$ be such that $u=u_0$ in a neighbourhood of $\partial A$ and $E(u,A)\leq m(u_0,A)+\gamma$. Now let $u_{n}:\e\L\to \S$ be a recovery sequence for $u$. Repeating the argument for the upper bound in Theorem \ref{constrainedproblem}, given $\delta>0$ we can modify $u_n$ to a function $\bar{u}_n\in\mathcal{PC}^{\eta}_{\e_n,u_0}(\L,A)$ for some $\eta=\eta(\delta)>0$ such that
\begin{equation*}
\limsup_n E_{\e_n}(\bar{u}_n,A)\leq E(u,A)+\delta,
\end{equation*}
By the choice of $u$ we obtain
\begin{equation*}
\lim_{\eta\to 0}\limsup_nm^{\eta}_{\e_n}(u_0,A)\leq\limsup_n E_{\e_n}(\bar{u}_n,A)+\delta
\leq m(u_0,A)+\gamma+\delta.
\end{equation*}
The claim now follows letting first $\delta\to 0$ and then $\gamma\to 0$.
\end{proof}

\section{Homogenization results for stationary lattices}\label{homogenization}
We now replace the deterministic lattice $\L$ by a random point set. In what follows we introduce the probabilistic framework. To this end let $(\Omega,\mathcal{F},\mathbb{P})$ be a probability space with a complete $\sigma$-algebra $\mathcal{F}$.

\begin{definition}
We say that a family $(\tau_z)_{z\in \mathbb{Z}^k},\tau_z:\Omega\rightarrow\Omega$, is an {\em additive group action} on $\Omega$ if
\begin{equation*}
\tau_{z_1+z_2}=\tau_{z_2}\circ\tau_{z_1}\text{ for all } z_1,z_2\in\mathbb{Z}^k.
\end{equation*} Such an additive group action is called {\em measure preserving} if
\begin{equation*}
\mathbb{P}(\tau_z B)=\mathbb{P}(B)\text{ for all }  B\in\mathcal{F},\,z\in\mathbb{Z}^k.
\end{equation*}
Moreover $(\tau_z)_{z\in\mathbb{Z}^k}$ is called {\em ergodic} if, in addition, for all $B\in\mathcal{F}$ we have the implication
\begin{equation*}
(\tau_z(B)=B\text{ for all }  z\in \mathbb{Z}^k)\quad\Rightarrow\quad\mathbb{P}(B)\in\{0,1\}.
\end{equation*}
\end{definition}
\hspace*{0.5cm}
For general $m\in\mathbb{N}$ we denote by $[a,b):=\{x\in\R^{m}:\;a_i\leq x_i<b_i\;\text{ for all } i\}$ the $m$-dimensional coordinate parallelepiped with opposite vertices $a$ and $b$, and we set $\mathcal{I}_m=\{[a,b):a,b\in \mathbb{Z}^m, a\neq b\}$. Next, we introduce the notion of regular families and discrete subadditive stochastic processes:

\begin{definition}\label{regular}
Let $\{I_n\}\subset\mathcal{I}_m$ be a family of sets. Then $\{I_n\}$ is called {\em regular} if there exists another family $\{I^{'}_n\}\subset\mathcal{I}_m$ and a constant $C>0$ such that	
\begin{itemize}
	\item[(i)] $I_n\subset I^{'}_n\text{ for all }  n$,
	\item[(ii)] $I^{'}_{n_1}\subset I^{'}_{n_2}$ whenever $n_1<n_2$,
	\item[(iii)] $0<\mathcal{H}^m(I^{'}_n)\leq C\,\mathcal{H}^m(I_n)\text{ for all } n$.
\end{itemize}
Moreover, if $\{I^{'}_n\}$ can be chosen such that $\mathbb{R}^m=\bigcup_n I^{'}_n$, then we write $\lim_{n\to\infty}I_n=\mathbb{R}^m$.
\end{definition}

\begin{definition}{\label{discreteprocess}}
A function $\mu:\mathcal{I}_m\to L^1(\Omega)$ is said to be a {\em discrete subadditive stochastic process} if the following properties hold $\mathbb{P}$-almost surely:	
\begin{itemize}
\item [(i)] for every $I\in\mathcal{I}_m$ and for every finite partition $(I_j)_{j\in J}\subset\mathcal{I}_m$ of $I$ we have
\begin{equation*}
\mu(I,\omega)\leq \sum_{j\in J}\mu(I_j,\omega).
\end{equation*}
\item [(ii)] $
\inf\left\{\frac{1}{\mathcal{H}^m(I)}\int_{\Omega}\mu(I,w)\;\mathrm{d}\mathbb{P}(\omega):\;I\in\mathcal{I}_m\right\} >-\infty.
$
\end{itemize}
\end{definition}

One of the key ingredients for our stochastic homogenization result will be the following pointwise ergodic theorem (see Theorem 2.7 in \cite{AkKr}).

\begin{theorem}{\label{ergodicthm}}
Let $\mu:\mathcal{I}_m\to L^1(\Omega)$ be a discrete subadditive stochastic process and let $I_n$ be a regular family in $\mathcal{I}_m$. If $\mu$ is stationary with respect to a measure-preserving group action $(\tau_z)_{z\in\mathbb{Z}^{m}}$, that means
\begin{equation*}
\text{for all } I\in\mathcal{I}_m,\; z\in\mathbb{Z}^{m}\ \ \mu(I+z,\omega)=\mu(I,\tau_z\omega)\ \text{almost surely},
\end{equation*}
then there exists $\mu^{\infty}:\Omega\to\mathbb{R}$ such that, for $\mathbb{P}$-almost every $\omega$,
\begin{equation*}
\lim_{n\to +\infty}\frac{\mu(I_n,\omega)}{\mathcal{H}^m(I_n)}=\mu^{\infty}(\omega).
\end{equation*}
\end{theorem}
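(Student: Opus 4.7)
The plan is to establish Theorem \ref{ergodicthm} by reducing the multi-dimensional subadditive ergodic problem to Birkhoff's pointwise ergodic theorem for the $\mathbb{Z}^m$-action, via a covering scheme that approximates the sets $I_n$ by disjoint translates of a single small cube. First, I would set
\begin{equation*}
\gamma := \inf\Bigl\{\frac{1}{\mathcal{H}^m(I)}\int_{\Omega}\mu(I,\omega)\,\mathrm{d}\mathbb{P}(\omega) : I \in \mathcal{I}_m\Bigr\},
\end{equation*}
which is finite by Definition \ref{discreteprocess}(ii) and is the natural candidate for the limit (to be replaced in the non-ergodic case by a $(\tau_z)$-invariant conditional expectation). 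A standard subadditivity-plus-stationarity computation applied to refining dyadic partitions of $I_n$ shows that $\mathcal{H}^m(I_n)^{-1}\int_\Omega\mu(I_n,\omega)\,\mathrm{d}\mathbb{P}(\omega) \to \gamma$ for any regular family $\{I_n\}$ with $\lim_n I_n = \mathbb{R}^m$. The bulk of the work is then to upgrade this convergence in mean to almost-sure convergence of the ratios themselves.

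For the almost-sure upper bound, fix $\delta > 0$ and choose a cube $J \in \mathcal{I}_m$ with $\int_\Omega \mu(J,\omega)\,\mathrm{d}\mathbb{P}(\omega) \leq (\gamma+\delta)\mathcal{H}^m(J)$. The geometric heart of the argument is a quasi-tiling lemma: for all large $n$ one selects disjoint translates $\{J + z_i^{(n)}\}_{i}\subset I_n$ covering $I_n$ up to a residual $R_n$ whose relative measure is below $\delta$. Subadditivity gives
\begin{equation*}
\frac{\mu(I_n,\omega)}{\mathcal{H}^m(I_n)} \leq \frac{1}{\mathcal{H}^m(I_n)}\sum_{i}\mu(J+z_i^{(n)},\omega) + \frac{\mu(R_n,\omega)}{\mathcal{H}^m(I_n)},
\end{equation*}
and stationarity rewrites each term as $\mu(J,\tau_{z_i^{(n)}}\omega)$, so the first sum is a Birkhoff ergodic average of the stationary integrable random variable $\omega \mapsto \mu(J,\omega)/\mathcal{H}^m(J)$. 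The classical pointwise theorem for measure-preserving $\mathbb{Z}^m$-actions makes this term converge almost surely to the conditional expectation of this variable with respect to the $\sigma$-algebra of invariant sets, bounded by $\gamma + \delta$ in mean. The residual piece is controlled by covering $R_n$ with bounded translates of unit cubes and invoking stationarity and integrability, bounding it almost surely by $C\delta$ along a suitable subsequence. Letting $\delta \to 0$ and diagonalizing yields $\limsup_n \mu(I_n,\omega)/\mathcal{H}^m(I_n) \leq \gamma$ (or the corresponding invariant random variable) almost surely.

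For the matching lower bound, one truncates $\mu$ from below using that $\mu(I,\omega) - \gamma\mathcal{H}^m(I)$ has non-negative mean for large cubes $I$, then applies Fatou's lemma together with the convergence of means just established, forcing $\liminf_n \mu(I_n,\omega)/\mathcal{H}^m(I_n) \geq \gamma$ almost surely; combining the two bounds produces the pointwise limit $\mu^\infty(\omega)$. The principal obstacle I anticipate is the quasi-tiling step: unlike in the one-dimensional Birkhoff setting, there is no telescoping shortcut available in $\mathbb{Z}^m$ for $m\geq 2$, and the construction of tilings of $I_n$ by translates of a fixed cube with uniformly small boundary correction is the genuinely combinatorial ingredient of the proof, supplied originally by Akcoglu and Krengel and later recast by Ornstein--Weiss for general amenable groups.
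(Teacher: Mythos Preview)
The paper does not prove Theorem \ref{ergodicthm} at all; it is quoted as a black box from Akcoglu and Krengel \cite{AkKr} (their Theorem 2.7) and used as a tool in the proof of Theorem \ref{mainthm2}. So there is no proof in the paper to compare your proposal against.

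That said, since you have sketched an argument, two comments on its soundness. Your upper-bound step (quasi-tiling by translates of a fixed cube, then Birkhoff for the $\mathbb{Z}^m$-action) is indeed the standard route and is essentially what Akcoglu--Krengel do. Your lower-bound step, however, is not right as written. Fatou applied to $\mu(I_n,\omega)/\mathcal{H}^m(I_n)$ yields $\mathbb{E}[\liminf_n]\leq \liminf_n \mathbb{E}=\gamma$, which points the wrong way; combining it with the a.s.\ upper bound $\limsup_n\leq \mu^\infty$ and $\mathbb{E}[\mu^\infty]=\gamma$ only forces $\limsup_n=\mu^\infty$ a.s., not $\liminf_n=\mu^\infty$. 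In the actual Akcoglu--Krengel proof the lower bound is obtained by a separate and non-trivial argument (a careful decomposition exploiting subadditivity, not a one-line Fatou step), and this is precisely where the multi-parameter case is substantially harder than Kingman's one-dimensional theorem. Also, the theorem as stated in the paper requires only that $\{I_n\}$ be regular in the sense of Definition \ref{regular}, whereas your convergence-of-means step tacitly uses $\lim_n I_n=\mathbb{R}^m$; the general regular case needs an additional reduction.
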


The statement is written for a generic $m$ since in this section we will use Theorem \ref{ergodicthm} for $m=k-1$, while in the next one also for $m=k$. We require some geometric and probabilistic properties of the random point set. 

\begin{definition}\label{defstatiolattice}
A random variable $\mathcal{L}:\Omega\rightarrow (\R^d)^{\mathbb{N}}$, $\w\mapsto \Lw=\{\Lw_i\}_{i\in\mathbb{N}}$ is called a {\em stochastic lattice}. We say that $\mathcal{L}$ is a {\em thin admissible lattice} if $\Lw$ is a thin admissible lattice in the sense of Definition \ref{flatadmissible} and the constants $M,r,R$ can be chosen independent of $\w$ $\mathbb{P}$-almost surely. The stochastic lattice $\mathcal{L}$ is said to be {\em stationary} if there exists a measure-preserving group action $(\tau_z)_{z\in\mathbb{Z}^k}$ on $\Omega$ such that, for $\mathbb{P}$-almost every $\w\in\Omega$,
\begin{equation*}
\mathcal{L}(\tau_z\omega)=\Lw+z.
\end{equation*}
If in addition $(\tau_z)_{z\in\mathbb{Z}^k}$ is ergodic, then $\mathcal{L}$ is called {\em ergodic}, too.
\end{definition}

In order to prove a homogenization result we make the following {\em structural assumption}\/: \\

{\bf Hypothesis 2} There exist functions $f_{nn},f_{lr}:\R^k\times\R^{2(d-k)}\rightarrow [0,+\infty)$ such that, setting $\Delta_k(x,y)=(y_1-x_1,\dots,y_k-x_k,x_{k+1},y_{k+1},\dots,x_d,y_d)$, it holds
\begin{equation*}
f_{nn}^{\e}(x,y)=f_{nn}(\Delta_k(x,y)),\quad
f_{lr}^{\e}(x,y)=f_{lr}(\Delta_k(x,y)).
\end{equation*}
Note that nearest-neighbour and long-range interaction coefficients are deterministic, but the set of nearest neighbours becomes now random. In the following we let $E_{\e}(\w)$ be the discrete energy defined in the previous section, with the stochastic lattice $\Lw$ in place of $\L$. As a general rule we will replace $\L$ by $\omega$ to indicate the dependence on the stochastic lattice $\Lw$. \\
\hspace*{0,5cm}
In view of Theorem \ref{represent} and Lemma \ref{approxminprob} we can further characterize the $\Gamma$-limits of the family $E_{\e}(\w)$ by investigating the quantities $m^{\eta}_{\e}(u_0,Q)$ for suitable oriented cubes and $u_0=u^{ij}_{x,\nu}$. Due to the decay assumptions of Hypothesis 1 it will be enough to consider truncated interactions. To this end, for fixed $L\in\mathbb{N}$ we will replace the long-range coefficients by
\begin{equation*}
f^L_{lr}(x,y):=f_{lr}(\Delta_k(x,y))\mathds{1}_{|x-y|\leq L}
\end{equation*}
and denote the corresponding energy by $E^L_{\e}(\w)(u,A)$. By Remark \ref{remarkfiniterange} the $\Gamma$-limit of the truncated energies is characterized by the minimum problem defined below: For $s_i,s_j\in\S$, $\nu\in S^{k-1}$ and a cube $Q_{\nu}(x,\rho)$ we set
\begin{equation}\label{randommin}
m_1^{\eta,L}(\w)(u^{ij}_{x,\nu},Q_{\nu}(x,\rho)):=\inf\left\{E^L_1(\w)(u,Q_{\nu}(x,\rho)):\; u\in \mathcal{PC}_{1,u^{ij}_{x,\nu}}^{\eta}(\w,Q_{\nu}(x,\rho))\right\}.
\end{equation}
The following technical auxiliary result will be used several times.
\begin{lemma}\label{stable}
Let $Q=Q_{\nu}(z,\rho)\subset\R^k$ be a cube and let $\{Q_n=Q_{\nu}(z_n,\rho_n)\}_n$ be a finite family of disjoint cubes with the following properties:

\begin{itemize}
	\item[(i)] $\min_n\rho_n\geq 4L$,  
	\item[(ii)] $z_n-z_1\in \{\nu\}^\perp$,
	\item[(iii)] $\dist(z_1,\{\nu\}^\perp+z)\leq \frac{1}{4}\min_n\rho_n$,
	\item[(iv)] $\bigcup_{n}Q_n\subset Q$,
	\item[(v)] {\rm either} $\dist(\partial \bigcup_nQ_n,\partial Q)> \eta$ {\rm or} $z_1-z\in\{\nu\}^\perp$.
\end{itemize}
\smallskip
Then there exists $C=C_L>0$ such that for all $\eta\geq L$
\begin{align*}
m_1^{\eta,L}(\w)(u^{ij}_{z,\nu},Q)\leq& \sum_{n}m_1^{\eta,L}(\w)(u^{ij}_{z_n,\nu},Q_n)+C\mathcal{H}^{k-1}\Big(\Big(Q\backslash\bigcup_n \overline{Q_n}\Big)\cap(\{\nu\}^\perp+z)\Big)
\\
&+C\sum_{n}\Big(\mathcal{H}^{k-2}\left((\partial Q_n\backslash\partial Q)\cap(\{\nu\}^\perp+z_1)\right)+\mathcal{H}^{k-1}(\partial Q_n\cap S_{\nu}(z,z_1))\Big),
\end{align*}
where $S_{\nu}(z,z_1)$ is the infinite (possibly, flat) stripe enclosed by the two hyperplanes $\{\nu\}^\perp+z$ and $\{\nu\}^\perp+z_1$.
\end{lemma}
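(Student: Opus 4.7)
The plan is to build an admissible competitor $v$ for $m_1^{\eta,L}(\w)(u^{ij}_{z,\nu},Q)$ by gluing near-minimizers of the small problems inside each $Q_n$ to the polyhedral datum $u^{ij}_{z,\nu}$ in the rest of $Q$, and then to estimate its energy through a careful split of interacting pairs. Fix $\gamma>0$ and, for each $n$, choose $u_n\in\mathcal{PC}_{1,u^{ij}_{z_n,\nu}}^{\eta}(\w,Q_n)$ with $E_1^L(\w)(u_n,Q_n)\le m_1^{\eta,L}(\w)(u^{ij}_{z_n,\nu},Q_n)+\gamma$. Define
\begin{equation*}
v(x):=\begin{cases} u_n(x) & \text{if } P_k(x)\in Q_n,\\ u^{ij}_{z,\nu}(P_k(x)) & \text{if } P_k(x)\in Q\setminus\bigcup_n Q_n.\end{cases}
\end{equation*}
Hypothesis (ii) implies $\{\nu\}^\perp+z_n=\{\nu\}^\perp+z_1$ for every $n$, so all the functions $u^{ij}_{z_n,\nu}$ collapse to a single $u^{ij}_{z_1,\nu}$. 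Using alternative (v) I verify that $v=u^{ij}_{z,\nu}$ in the $\eta$-collar of $\partial Q$: in the first alternative this collar is disjoint from $\bigcup_n Q_n$, while in the second $u^{ij}_{z_1,\nu}\equiv u^{ij}_{z,\nu}$ and the $\eta$-buffer built into each $u_n$ propagates the datum to $\partial Q$ wherever $\partial Q_n$ touches it. Hence $v\in\mathcal{PC}_{1,u^{ij}_{z,\nu}}^{\eta}(\w,Q)$ is admissible.

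I then split the pairs contributing to $E_1^L(\w)(v,Q)$ into three groups. \textbf{Interior pairs}, with both endpoints projecting into the same $Q_n$, sum to at most $\sum_n m_1^{\eta,L}(\w)(u^{ij}_{z_n,\nu},Q_n)+N\gamma$, where $N$ is the (finite) number of small cubes. \textbf{Exterior pairs}, with both endpoints projecting into $Q\setminus\bigcup_n Q_n$, see only the polyhedral datum $u^{ij}_{z,\nu}$ whose jump set in this region is $(Q\setminus\bigcup_n\overline{Q_n})\cap(\{\nu\}^\perp+z)$; the argument of Proposition \ref{ub}, with constants now depending on the truncation scale $L$ rather than on the tail of $J_{lr}$, bounds their total contribution by $C\,\mathcal{H}^{k-1}\bigl((Q\setminus\bigcup_n\overline{Q_n})\cap(\{\nu\}^\perp+z)\bigr)$. \textbf{Mixed pairs}, with endpoints straddling some $\partial Q_n$, satisfy $|x-y|\le L\le\eta$, so on the $Q_n$-side $u_n$ coincides with $u^{ij}_{z_n,\nu}=u^{ij}_{z_1,\nu}$; since the two piecewise-constant functions $u^{ij}_{z_1,\nu}$ and $u^{ij}_{z,\nu}$ agree off the stripe $S_\nu(z,z_1)$, counting crossings as in Lemma \ref{longtoshort} gives the codim-one contribution $C\,\mathcal{H}^{k-1}(\partial Q_n\cap S_\nu(z,z_1))$. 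The residual pairs, whose range-$L$ interaction straddles the edge where the jump hyperplane $\{\nu\}^\perp+z_1$ exits through $\partial Q_n$, contribute $O(L^{k-1})$ per unit $(k-2)$-area, producing the codim-two term $C\,\mathcal{H}^{k-2}\bigl((\partial Q_n\setminus\partial Q)\cap(\{\nu\}^\perp+z_1)\bigr)$.

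Summing the three groups and sending $\gamma\to 0$ delivers the stated inequality. The main technical obstacle is the bookkeeping for the mixed pairs: one must check that no pair is double-counted when two cubes share a face, that pairs reaching into the buffer of some $Q_n$ from outside are correctly assigned, and that the codim-two edge term truly absorbs the truncation error left over once the stripe bound has been used. Apart from this geometric accounting, the remaining steps reduce to direct applications of Proposition \ref{ub} and Lemma \ref{longtoshort} on the appropriate localised regions.
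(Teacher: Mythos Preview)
Your proposal is correct and follows essentially the same route as the paper: construct the same competitor $v$ by gluing the (near-)minimizers $u_n$ to the flat datum $u^{ij}_{z,\nu}$, check admissibility via (v), and split contributing pairs into interior/exterior/mixed. The only organizational difference is that the paper separates mixed pairs into Case~(I) (both projections in $\overline{Q_n}\cup\overline{Q_m}$ for $n\neq m$, yielding the $\mathcal{H}^{k-2}$ term) and Case~(II) (one projection in $Q\setminus\bigcup_n\overline{Q_n}$, yielding the stripe term), and it handles the exterior pairs by a short self-contained geometric argument using assumptions (i) and (iii) directly---showing that the orthogonal projection of $P_k(x)$ onto $\{\nu\}^\perp+z$ remains in the complement of $\bigcup_n\overline{Q_n}$---rather than by invoking Proposition~\ref{ub}.
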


\begin{proof}
During this proof, given $y\in\R^k$, we denote by $P_{\nu,y}$ the projection onto the affine space $\{\nu\}^\perp+y$. For each $n$ let $u_n$ be a minimizer for the problem in (\ref{randommin}) with $Q_{\nu}(x,\rho)=Q_n$. By assumptions (ii) and (v), the function $v:\Lw\to\S$ defined as
\begin{equation*}
v(x)=\begin{cases}
u_n(x) &\mbox{if $P_k(x)\in \overline{Q_n}$ for some $n$,}\\
u^{ij}_{z,\nu}(P_k(x)) &\mbox{otherwise}
\end{cases}
\end{equation*}
is well-defined and belongs to $\mathcal{PC}_{1,u^{ij}_{z,\nu}}^{\eta}(\w,Q)$. For $x,y\in\Lw\cap Q$ with $|x-y|\leq L$, we say that
\begin{itemize}
	\item[(I)] {\rm  holds if }$P_k(x)\in \overline{Q_n}$ and $P_k(y)\in \overline{Q_m}$ for $n\neq m$ or $P_k(x),P_k(y)\in\partial Q_n$,
	\item[(II)] {\rm  holds if }$P_k(x)\in Q\backslash\bigcup_n\overline{Q_n}$ and $P_k(y)\in \overline{Q_n}$ for some $n$.
\end{itemize}
By (iv) and Hypothesis 1 we can estimate
\begin{align}\label{splittingestimate}
m_1^{\eta,L}(\w)(u^{ij}_{z,\nu},Q)\leq E_1^L(\w)(v,Q)\leq& \sum_nm_1^{\eta,L}(\w)(u^{ij}_{z_n,\nu},Q_n)+E_1^L(\w)\Big(v,Q\backslash\bigcup_n\overline{Q_n}\Big)\nonumber\\
&+C\sum_{\substack{|x-y|\leq L\\\text{(I) or (II) hold}}}|v(x)-v(y)|.
\end{align}
We start with estimating the contribution of $x,y\in Q\backslash\bigcup_n\overline{Q_n}$. Suppose that $v(x)\neq v(y)$. Then $P_k(x)$ and $P_k(y)$ lie on different sides of the hyperplane $\{\nu\}^\perp+z$. Then it holds true that $P_{\nu,z}(P_k(x))\in Q\backslash\bigcup_n\overline{Q_n}$, otherwise assumptions (i) and (iii) would imply
\begin{align*}
L\geq |P_k(x)-P_k(y)|\geq |P_k(x)-P_{\nu,z}(P_k(x))|\geq \frac{\rho_n}{2}-\frac{\rho_n}{4}\geq 2L.
\end{align*} 
Thus $\dist(P_k(x),(Q\backslash\bigcup_n\overline{Q_n})\cap(\{\nu\}^\perp+z))\leq L$ and, using the properties of Definition \ref{flatadmissible}, it follows that
\begin{equation}\label{outsideunion}
E_1^L(\w)\Big(v,Q\backslash\bigcup_n\overline{Q_n}\Big)\leq C\mathcal{H}^{k-1}\Big(\Big(Q\backslash\bigcup_n \overline{Q_n}\Big)\cap(\{\nu\}^\perp+z)\Big).
\end{equation}
Next we have to control the interactions in Case (I). Given such $x,y$ with $|x-y|\leq L$, we know that by the definition of $v$, the boundary conditions on the smaller cubes and (ii) that  $v(x)=u^{ij}_{z_1,\nu}(P_k(x))$ and $v(y)=u^{ij}_{z_1,\nu}(P_k(y))$, so that if they contribute to the energy we conclude from assumption (ii) that $P_k(x)$ and $P_k(y)$ must lie on different sides of the hyperplane $\{\nu\}^\perp+z_1$. We deduce that $|P_{\nu,z_1}(P_k(x))-P_k(x)|\leq L$. Since by (iv) the segment $[P_{\nu,z_1}(P_k(x)),P_{\nu,z_1}(P_k(y))]$ intersects the $(k-2)$-dimensional set $(\partial Q_n\backslash\partial Q)\cap (\{\nu\}^\perp+z_1)$, it follows that 
\begin{equation*}
\dist\left(P_k(x),(\partial Q_n\backslash\partial Q)\cap (\{\nu\}^\perp+z_1)\right)\leq 2L.
\end{equation*}
Again, by Definition \ref{flatadmissible} and the above inequality we derive the estimate
\begin{equation}\label{betweencubes}
\sum_{\substack{|x-y|\leq L\\ \text{(I) holds}}}|v(x)-v(y)|\leq C\sum_{n}\mathcal{H}^{k-2}\left((\partial Q_n\backslash\partial Q)\cap(\{\nu\}^\perp+z_1)\right).
\end{equation}
It remains to estimate the contributions coming from Case (II). For such $x,y$ with $|x-y|\leq L$, due to the boundary conditions on the smaller cubes, a positive energy contribution implies $u^{ij}_{z,\nu}(P_k(x))\neq u^{ij}_{z_1,\nu}(P_k(y))$. Thus the segment $[P_k(x),P_k(y)]$ intersects $\partial Q_n$ in (at least) one point $x_n$ and also $S_{\nu}(z,z_1)$ in (at least) one point $x_S$. Denote by $x_{n,S}$ the projection of $x_S$ onto the facet of the cube $Q_n$ containing $x_n$. Since this facet cannot be parallel to $\{\nu\}^\perp$ by (i) and (iii), it holds $x_{n,S}\in \partial Q_n\cap S_{\nu}(z,z_1)$ and
\begin{equation*}
|P_k(x)-x_{n,S}|\leq |P_k(x)-x_S|+|x_S-x_{n,S}|\leq L+|x_S-x_n|\leq 2L,
\end{equation*}
which yields the estimate 
\begin{equation}\label{stripeestimate}
\dist(P_k(x),\partial Q_n\cap S_{\nu}(z,z_1))\leq 2L.
\end{equation}
This set may be not $(k-1)$-dimensional in the second possibility of (v). In this case one can bound the interactions by the right hand side of (\ref{outsideunion}). Otherwise, using (\ref{stripeestimate}) we obtain the estimate
\begin{equation}\label{cubeandnot}
\sum_{\substack{|x-y|\leq L\\ \text{(II) holds}}}|v(x)-v(y)|\leq C\sum_n\mathcal{H}^{k-1}(\partial Q_n\cap S_{\nu}(z,z_1)).
\end{equation}
In any case the claim now follows from (\ref{splittingestimate}), (\ref{outsideunion}), (\ref{betweencubes}) and (\ref{cubeandnot}).
\end{proof}

\begin{remark}\label{alsorectangles}
Lemma \ref{stable} still holds if we replace cubes by $k$-parallelepipeds of the type $I_{\nu}(z,\{\rho_m\}_m)=z+\{x\in\R^k:\;|\langle x,\nu_m\rangle |<\frac{\rho_m}{2}\}$. Then the cubes $Q_n$ are replaced by the collection $I_n=I_{\nu}(z_n,\{\rho_m^n\}_m)$ and in the assumptions (i) and (iii) we have to replace $\rho_n$ by $\min_m \rho_{m}^n$.  
\end{remark}

The next theorem is the main result of this section.

\begin{theorem}\label{mainthm2}
Let $\mathcal{L}$ be a stationary, thin admissible stochastic lattice and let $f_{nn}$ and $f_{lr}$ satisfy Hypotheses 1 and 2. 
For $\mathbb{P}$-almost every $\w$ and for all $s_i,s_j\in\S$ and $\nu\in S^{k-1}$ there exists
\begin{equation*}
\phi_{{\rm hom}}(\w;s_i,s_j,\nu):=\lim_{\eta\to 0}\limsup_{t\to +\infty}\frac{1}{t^{k-1}}\inf\left\{E_1(\w)(u,Q_{\nu}(0,t)):\; u\in \mathcal{PC}_{1,u^{ij}_{0,\nu}}^{\eta t}(\w,Q_{\nu}(0,t))\right\}.
\end{equation*}
The functionals $E_{\e}(\w)$ $\Gamma$-converge with respect to the convergence of Definition \ref{defconv} to the functional $E_{{\rm hom}}(\w):L^1(D,\R^q)\to [0,+\infty]$ defined by
\begin{equation*}
E_{{\rm hom}}(\w)(u)=
\begin{cases}\displaystyle
\int_{S_u} \phi_{{\rm hom}}(\w;u^+,u^-,\nu_u)\,\mathrm{d}\Hk &\mbox{if $u\in BV(D,\S)$,}\\
+\infty &\mbox{otherwise.}
\end{cases}
\end{equation*}
If $\mathcal{L}$ is ergodic, then $\w\mapsto \phi_{{\rm hom}}(\w,s_i,s_j,\nu)$ is almost-surely constant.
\end{theorem}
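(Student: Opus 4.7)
The plan is to combine the compactness and integral representation of Theorem \ref{mainthm1} with the Akçoğlu--Krengel subadditive ergodic theorem (Theorem \ref{ergodicthm}) applied to suitable cell problems. Fixing an arbitrary sequence $\e_n\to 0$, Theorem \ref{mainthm1} applied $\omega$-by-$\omega$ produces a further subsequence along which $E_{\e_n}(\omega)$ $\Gamma$-converges to a surface functional with a density $\phi(\omega;x,s_i,s_j,\nu)$. By Lemma \ref{approxminprob}, this density is identified by the asymptotic behaviour of the rescaled minimum problems $m^{\eta}_{\e_n}(u^{ij}_{x,\nu},Q_{\nu}(x,\rho))$. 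Thus the proof reduces to showing that, $\mathbb{P}$-almost surely, $\rho^{-(k-1)}m^{\eta}(u^{ij}_{x,\nu},Q_{\nu}(x,\rho))$ admits, after letting the cube size tend to infinity and then $\eta\to 0$, a limit which is independent of $x$ and (in the ergodic case) of $\omega$. Since this limit will not depend on the extracted subsequence, the Urysohn property of $\Gamma$-convergence will upgrade the subsequential convergence to the full one.

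\textbf{Subadditive process on $(k-1)$-cells.}
Fix a rational $\nu\in S^{k-1}\cap\mathbb{Q}^k$ and a pair $(s_i,s_j)\in\S^2$. Complete $\nu$ to an orthonormal basis $\{\nu,\nu_2,\dots,\nu_k\}$ and choose $\lambda>0$ with $\lambda\nu_j\in\mathbb{Z}^k$ for every $j\geq 2$ (possible by rationality). For a truncation length $L$ and a coordinate parallelepiped $I=[a,b)\in\mathcal{I}_{k-1}$, denote by $R^{\nu}_I\subset\R^k$ the $k$-parallelepiped obtained as the image of $I\times(-T,T)$ under the linear map sending the canonical basis to $(\lambda\nu_2,\dots,\lambda\nu_k,\nu)$, with a fixed $T>L$. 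Define
\begin{equation*}
\mu^{\nu}_{ij}(I,\omega):=m_1^{L,L}(\omega)\bigl(u^{ij}_{c_I,\nu},R^{\nu}_I\bigr),
\end{equation*}
where $c_I$ is the centre of $R^{\nu}_I$ and $m_1^{\eta,L}$ is the truncated minimum problem (\ref{randommin}). Hypothesis 2 together with the stationarity of $\mathcal{L}$ and the rationality of $\nu$ yield the covariance $\mu^{\nu}_{ij}(I+e_j,\omega)=\mu^{\nu}_{ij}(I,\tau_{\lambda\nu_{j+1}}\omega)$ required by Theorem \ref{ergodicthm}. Subadditivity follows from the rectangular version of Lemma \ref{stable} in Remark \ref{alsorectangles}: gluing minimizers across a partition $I=\bigsqcup_n I_n$ produces a competitor whose excess energy is of order $\sum_n\mathcal{H}^{k-2}(\partial I_n\setminus\partial I)$, hence negligible after division by $\mathcal{H}^{k-1}(I)$.

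\textbf{Passing to the limit and removing the truncations.}
Theorem \ref{ergodicthm} then yields, for the regular family $I_t=(-t/2,t/2)^{k-1}$, the almost-sure existence of
\begin{equation*}
\phi^{L}_{{\rm hom}}(\omega;s_i,s_j,\nu):=\lim_{t\to\infty}\frac{\mu^{\nu}_{ij}(I_t,\omega)}{t^{k-1}},
\end{equation*}
which is furthermore $\omega$-constant when $\mathcal{L}$ is ergodic. The analogous limit computed on the cubes $Q_{\nu}(x,t)$ in place of the slabs $R^{\nu}_{I_t}$ agrees with $\phi^L_{{\rm hom}}$: stationarity reduces the problem to $x$ in a single unit cell of $\lambda\mathbb{Z}^k$, and Lemma \ref{stable} interpolates between the remaining configurations with an error $o(t^{k-1})$. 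Sending $L\to\infty$ is controlled by Lemma \ref{longtoshort} and the decay (\ref{lrdecay}); sending $\eta\to 0$ is covered by Lemma \ref{approxminprob}. This produces the homogenized density $\phi_{{\rm hom}}(\omega;s_i,s_j,\nu)$ claimed in the statement for every rational $\nu$.

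\textbf{Extension to all directions and conclusion.}
To extend to arbitrary $\nu\in S^{k-1}$, I would prove a modulus of continuity for $\nu\mapsto\phi_{{\rm hom}}(\omega;s_i,s_j,\nu)$ uniform in $\omega$ and in the cube size: given $\nu'$ close to $\nu$, a configuration optimal for $Q_{\nu'}(0,t)$ can be modified in a thin layer between the two hyperplanes $\nu^{\perp}$ and $(\nu')^{\perp}$ so as to become admissible for the problem on $Q_{\nu}(0,t)$, with modification cost estimated by Lemma \ref{stable} as $C\,|\nu-\nu'|\,t^{k-1}$. Since rational directions are countable, the exceptional $\mathbb{P}$-null set can be chosen uniformly in $\nu$, and continuity then defines $\phi_{{\rm hom}}$ on all of $S^{k-1}$. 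Because the resulting density does not depend on the chosen subsequence $\e_n$, the subsequential $\Gamma$-convergence of Theorem \ref{mainthm1} upgrades to the full $\Gamma$-convergence to $E_{{\rm hom}}(\omega)$. The main technical obstacle I anticipate is the careful management of the lower-order error terms in the subadditivity estimate of Lemma \ref{stable}: the interplay between the trace width $\eta$, the truncation $L$, and the $(k-1)$-dimensional scaling has to be tuned so that every boundary correction is genuinely $o(\mathcal{H}^{k-1}(I_t))$, and this is precisely where the structural Hypothesis 2 (which decouples the interaction coefficients from the random position of the points) is indispensable.
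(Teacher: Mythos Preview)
Your overall architecture is right and matches the paper: reduce to the cell formula via Theorem~\ref{mainthm1} and Lemma~\ref{approxminprob}, truncate the range, run Akcoglu--Krengel on a $(k-1)$-dimensional process for rational $\nu$, then extend. Two points, however, need work.

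\textbf{The $x$-independence argument is a genuine gap.} You write that ``stationarity reduces the problem to $x$ in a single unit cell of $\lambda\mathbb{Z}^k$''. This is where the proposal fails. When you blow up $Q_{\nu}(x,\rho)$ to $Q_{\nu}(t_n x,t_n\rho)$, stationarity gives
\[
m_1^{L,L}(\omega)\bigl(u^{ij}_{t_n x,\nu},Q_{\nu}(t_n x,t_n\rho)\bigr)
= m_1^{L,L}\bigl(\tau_{\lfloor t_n x\rfloor}\omega\bigr)\bigl(u^{ij}_{\{t_n x\},\nu},Q_{\nu}(\{t_n x\},t_n\rho)\bigr),
\]
so the centre does land in a unit cell, but the realisation $\omega$ has been replaced by $\tau_{\lfloor t_n x\rfloor}\omega$, which \emph{varies with $n$}. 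The subadditive ergodic theorem only guarantees the limit for the \emph{fixed} $\omega$, and only along the sublattice $\{\nu\}^{\perp}\cap\mathbb{Z}^k$; there is no a priori reason the same limit is attained along the moving sequence $\tau_{\lfloor t_n x\rfloor}\omega$. The paper resolves this with a genuinely additional ergodic argument (its Step~3): first it shows, via Lemma~\ref{stable}, that $\omega\mapsto\phi^L_{ij}(\omega;\nu)$ is invariant under the \emph{full} $\mathbb{Z}^k$-action (not only the $(k-1)$-dimensional one used in the process), and then it applies Birkhoff's pointwise theorem to the indicator of the event $\mathcal{Q}_N=\{\omega:\text{the cell limit is $\e$-close for all }t\geq N/2\}$, conditioned on the $\sigma$-algebra of $\tau_x$-invariant sets, to locate, for each large $n$, a nearby integer $l_n$ with $\tau_{l_n x}\omega\in\mathcal{Q}_{N_0}$. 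Lemma~\ref{stable} then compares $Q_{\nu}(-nx,n\rho)$ with $Q_{\nu}(-l_n x,\beta_n\rho)$ at a controlled cost. This Birkhoff--conditional-expectation step is the heart of the shift-invariance in physical space and is absent from your sketch.

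\textbf{Subadditivity is only approximate.} Your process $\mu^{\nu}_{ij}(I,\omega)=m_1^{L,L}(\omega)(u^{ij}_{c_I,\nu},R^{\nu}_I)$ is \emph{not} subadditive in the sense required by Theorem~\ref{ergodicthm}: Lemma~\ref{stable} leaves a residual term $C\sum_n\mathcal{H}^{k-2}(\partial I_n\setminus\partial I)$ which does not vanish for a fixed partition. Saying it is ``negligible after division by $\mathcal{H}^{k-1}(I)$'' is not enough; Akcoglu--Krengel needs $\mu(I)\leq\sum_n\mu(I_n)$ exactly. The paper's fix is to absorb this error into the process by setting $\mu(I,\omega)=m_1^{L,L}(\omega)(u^{ij}_{0,\nu},Q_I)+C_{\mu}\mathcal{H}^{k-2}(\partial I)$ with $C_{\mu}$ large enough; the added perimeter term is itself subadditive with the correct sign and does not affect the limit.
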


\begin{proof}
Fix any sequence $\e\to 0$.  According to Theorem \ref{mainthm1}, for all $\w\in\Omega$ such that $\Lw$ is admissible, there exists a ($\w$-dependent) subsequence $\e_{n}$ such that
\begin{equation*}
\Gamma\hbox{-}\lim_nE_{\e_{n}}(\w)(u,A)=\int_{S_u\cap A}\phi(\w;x,u^+,u^-,\nu)\,\mathrm{d}\mathcal{H}^{k-1}
\end{equation*}
for all $u\in BV(D,\S)$ and every $A\in\Ard$. According to Theorem \ref{represent} and Lemma \ref{approxminprob}, for any $x\in D$, $s_i,s_j\in\S$ and $\nu\in S^{k-1}$ it holds that
\begin{equation*}
\phi(\w;x,s_i,s_j,\nu)=\limsup_{\rho\to 0}\frac{1}{\rho^{k-1}}m(\w)(u^{ij}_{x,\nu},Q_{\nu}(x,\rho))
=\limsup_{\rho\to 0}\frac{1}{\rho^{k-1}}\lim_{\eta\to 0}\limsup_{n}m_{\e_{n}}^{\eta}(\w)(u^{ij}_{x,\nu},Q_{\nu}(x,\rho)).
\end{equation*}
If we change the variables via $t_n=\e_{n}^{-1}$ and $v(x)=u(t_n^{-1}x)$, the above characterization reads
\begin{equation*}
\phi(\w;x,s_i,s_j,\nu)=\limsup_{\rho\to 0}\lim_{\eta\to 0}\limsup_n\frac{1}{(\rho t_n)^{k-1}}m_1^{\eta t_n}(\w)(u^{ij}_{t_nx,\nu},t_nQ_{\nu}(x,\rho)).
\end{equation*}
Except for the claim on ergodicity, due to the Urysohn property of $\Gamma$-convergence (recall Remark \ref{almostgamma}) it is enough to show that for a set of full probability the limit in $\rho$ can be neglected and the remaining limits do not depend on $x$ or the subsequence $t_n$. We divide the proof into several steps.\\
\textbf{Step 1} Truncating the range of interactions\\
First we show that it is enough to consider the case of finite range interactions. We argue that it is enough to prove that there exists $\phi^L_{{\rm hom}}(\w;\nu)$ and a set $\Omega_L$ of full probability such that for all $\w\in\Omega_L$, $x\in D$, every cube $Q_{\nu}(x,\rho)$ and every sequence $t_n\to +\infty$ it holds
\begin{equation}\label{claimL}
\phi^L_{{\rm hom}}(\w;s_i,s_j,\nu)=\lim_{\eta\to 0}\limsup_n\frac{1}{(\rho t_n)^{k-1}}m_1^{\eta t_n,L}(\w)(u^{ij}_{t_nx,\nu},t_nQ_{\nu}(x,\rho)),
\end{equation}
where $m^{\eta t_n,L}_1(\w)$ is defined in (\ref{randommin}). Indeed, if (\ref{claimL}) is proven, then for all $\w\in\bigcap_L \Omega_L$ we find a configuration $v^L_n:\Lw\to\S$ with the correct boundary conditions (extended to the whole space) that minimizes $E^L_1(\w)(\cdot,t_nQ_{\nu}(x,\rho))$ in (\ref{randommin}). Using Lemma \ref{longtoshort} we obtain the estimate
\begin{align*}
0&\leq\frac{m_1^{\eta t_n}(\w)(u^{ij}_{t_nx,\nu},t_nQ_{\nu}(x,\rho))-m_1^{\eta t_n,L}(\w)(u^{ij}_{t_nx,\nu},t_nQ_{\nu}(x,\rho))}{(\rho t_n)^{k-1}}
\\
&\leq\frac{E_1(\w)(v^L_n,t_nQ_{\nu}(x,\rho))-E_1^L(\w)(v^L_n,t_nQ_{\nu}(x,\rho))}{(\rho t_n)^{k-1}}
\\
&\leq \frac{C}{(\rho t_n)^{k-1}}\sum_{2|\xi|>L}J_{lr}(|\hat{\xi}|)|\xi|\sum_{\substack{(x,y)\in\NNw \\ x,y\in (t_nQ_{\nu}(x,\rho))^{3(R+M)}}}f_{nn}(x,y,v^L_n(x),v^L_n(y)).
\end{align*}
The inner sum can be bounded by the energy plus interactions close to $\partial t_nQ_{\nu}(x,\rho)$. Due to the boundary conditions these are of order $(\rho t_n)^{k-2}$. Using the trivial a priori bound $m^{\eta}_1(\w)(u^{ij}_{t_nx,\nu},t_nQ_{\nu}(x,\rho))\leq C(\rho t_n)^{k-1}$ we deduce that
\begin{equation*}
0\leq\frac{m_1^{\eta t_n}(\w)(u^{ij}_{t_nx,\nu},t_nQ_{\nu}(x,\rho))-m_1^{\eta t_n,L}(\w)(u^{ij}_{t_nx,\nu},t_nQ_{\nu}(x,\rho))}{(\rho t_n)^{k-1}}\leq C\sum_{2|\xi|>L}J_{lr}(|\hat{\xi}|)|\xi|. 
\end{equation*}
Due to the integrability assumption of Hypothesis 1, we infer that $\phi^L_{{\rm hom}}(\w;s_i,s_j,\nu)$ is a Cauchy sequence with respect to $L$ and moreover, in combination with (\ref{claimL}), we deduce that
\begin{align*}
\lim_{L}\phi^L_{{\rm hom}}(\w;s_i,s_j,\nu)=\lim_{\eta\to 0}\limsup_n\frac{1}{(\rho t_n)^{k-1}}m_1^{\eta t_n}(\w)(u^{ij}_{t_nx,\nu},t_nQ_{\nu}(x,\rho))
\end{align*}
exists, is independent of $x,\rho$ and the sequence $t_n$. Therefore it remains to show (\ref{claimL}). For clarity of the argument we first consider an auxiliary problem where we replace the varying boundary width $\eta t_n$ by $L$. As an intermediate result we show that there exists
\begin{equation}\label{auxprocess}
\phi^L_{ij}(\w;\nu)=\lim_n\frac{1}{(\rho t_n)^{k-1}}m_1^{L,L}(\w)(u^{ij}_{t_nx,\nu},t_nQ_{\nu}(x,\rho))
\end{equation}
and this limit does not depend on $x,\rho$ and the sequence $t_n$.
\\
\textbf{Step 2} Existence of $\phi_{ij}^L$ for $x=0$ and rational directions\\
Fix $L\in\mathbb{N}$. We have to show that, for $\mathbb{P}$-almost every $\w\in \Omega$ and every $s_i,s_j\in\S$ and $\nu\in S^{k-1}$, there exists the limit in (\ref{auxprocess}). We start with the case $x=0$ and $\nu\in S^{k-1}\cap\mathbb{Q}^k$. For this choice we can use the subadditive ergodic theorem in $(k-1)$-dimensions.
\\
\textbf{Substep 2.1} Defining a stochastic process\\
We need a few preliminaries: Given $\nu\in S^{k-1}$ there exists an orthogonal matrix $A_{\nu}\in \mathbb{R}^{k\times k}$ such that $A_{\nu}e_k=\nu$, the mapping $\nu\mapsto A_{\nu}e_i$ is continuous on $S^{k-1}\backslash\{-e_k\}$ and if $\nu\in \mathbb{Q}^k$ then $A_{\nu}\in\mathbb{Q}^{k\times k}$ (it suffices to consider the orthogonal transformation that keeps the vector $\nu+e_k$ fix and reverses the orthogonal complement). We now fix a rational direction $\nu\in S^{k-1}\cap\mathbb{Q}^{k}$. Then there exists an integer $N=N(\nu)>4L$ such that $NA_{\nu}(z,0)\in\mathbb{Z}^k$ for all $z\in \mathbb{Z}^{k-1}$. We now define a discrete stochastic process (see Definition \ref{discreteprocess}). To $I=[a_1,b_1)\times \dots\times[a_{k-1},b_{k-1})\in\mathcal{I}_{k-1}$ we associate the set $Q_I\subset\mathbb{R}^k$ defined by 
\begin{equation*}
Q_I:=NA_{\nu}\Big(\text{int}\,I\times (-\frac{s_{\max}}{2},\frac{s_{\max}}{2})\Big),
\end{equation*}
where $s_{\max}=\max_{i}|b_i-a_i|$ is the maximal side length. Then we define the process $\mu:\mathcal{I}_{k-1}\to L^1(\Omega)$ as
\begin{equation}\label{process}
\mu(I,\w):=\inf\left\{E^L_1(\w)(v,Q_I):\,v\in \mathcal{PC}_{1,u^{ij}_{0,\nu}}^{L}(\w,Q_I) \right\}+C_{\mu}\mathcal{H}^{k-2}(\partial I),
\end{equation}
where $C_{\mu}$ is a constant to be chosen later. We first have to show that $\mu(I,\cdot)$ is a $L^1(\Omega)$-function. Testing the $\Lw$-interpolation of $u_{0,\nu}$ as candidate in the infimum problem, one can use the growth assumptions from Hypothesis 1 and Definition \ref{flatadmissible} to show that there exists a constant $C>0$ such that
\begin{equation}\label{minestimate}
\mu(I,\w)\leq CN^{k-1}\mathcal{H}^{k-1}(I)
\end{equation}
for all $I\in\mathcal{I}_{k-1}$ and almost every $\w\in\Omega$ so that $\mu(I,\cdot)$ is essentially bounded. $\mathcal{F}$-measurability can be proven similar to \cite[Lemma A.2]{ACR}.
\\
\hspace*{0,5cm}
We continue with proving lower-dimensional stationarity of the process. Let $z\in\mathbb{Z}^{d-1}$. Note that $Q_{I-z}=Q_I-z^N_{\nu}$, where $z^N_{\nu}:=NA_{\nu}(z,0)\in \{\nu\}^\perp\cap\mathbb{Z}^k$. By the stationarity of $\mathcal{L}$ it holds that $v\in \mathcal{PC}_{1,u^{ij}_{0,\nu}}^{L}(\w,Q_{I-z})$ if and only if $u(\cdot)=v(\cdot-z^N_{\nu})\in \mathcal{PC}_{1,u^{ij}_{0,\nu}}^{L}(\tau_{z^N_{\nu}}\w,Q_I)$. Moreover, by definition of the nearest neighbours, Hypothesis 2 and again stationarity of $\L$ we obtain that $E_1^L(\w)(v,Q_{I-z})=E_1^L(\tau_{z^N_{\nu}}\w)(u,Q_I)$. By the shift invariance of the Hausdorff measure we conclude that $\mu(I-z,\w)=\mu(I,\tau_{z^N_{\nu}}\w)$. Setting $\tilde{\tau}_z=\tau_{-z^N_{\nu}}$ we obtain a measure-preserving group action on $\mathbb{Z}^{k-1}$ such that $\mu(I,\tilde{\tau}_z\w)=\mu(I+z)(\w)$, which yields stationarity.
\\
\hspace*{0.5cm}
To show subadditivity, let $I\in\mathcal{I}_{k-1}$ and let $\{I_n\}_{n}\subset\mathcal{I}_{k-1}$ be a finite disjoint family such that $I=\bigcup_{n}I_n$. Note that $Q_I$ and the family $\{Q_{I_n}\}_{n}$ fulfill the assumptions of Lemma \ref{stable} (in the sense of Remark \ref{alsorectangles}). We conclude
\begin{equation*}
m_1^{L,L}(\w)(u^{ij}_{0,\nu},Q_I)\leq \sum_{n}m_1^{L,L}(\w)(u^{ij}_{0,\nu},Q_{I_n})+C\sum_{n}\mathcal{H}^{k-2}((\partial Q_{I_n}\backslash\partial Q_I)\cap\{\nu\}^\perp).
\end{equation*}
Applying the definition of $\mu(I,\w)$ yields
\begin{align*}
\mu(I,\w)&=m_1^{L,L}(\w)(u^{ij}_{0,\nu},Q_I)+C_{\mu}\mathcal{H}^{k-2}(\partial Q_I\cap\{\nu\}^\perp)
\\
&\leq \sum_{n}\mu(I_n,\w)+(C-C_{\mu})\sum_{n}\mathcal{H}^{k-2}((\partial Q_{I_n}\backslash Q_I)\cap\{\nu\}^\perp),
\end{align*}
which yields subadditivity if we choose $C_{\mu}>C$. Property (ii) in Definition \ref{discreteprocess} is trivial since $\mu(I,\w)$ is always nonnegative. By Theorem \ref{ergodicthm} there exists $\phi_{ij}^L(\w;\nu)$ such that almost surely, for rational directions $\nu\in S^{k-1}$, it holds
\begin{equation*}
\phi_{ij}^L(\w;\nu)=\lim_{n\to +\infty}\frac{1}{(2Nn)^{k-1}}m_1^{L,L}(\w)(u^{ij}_{0,\nu},Q_{\nu}(0,2Nn)),
\end{equation*}
where we used that the term $C_{\mu}\mathcal{H}^{k-2}(\partial I)$ is negligible for the limit.
\\
\textbf{Substep 2.2} From integer sequences to all sequences\\
Next we consider an arbitrary sequence $t_n\to +\infty$. From the previous step we know that
\begin{equation*}
\phi_{ij}^L(\w;\nu)=\lim_{n\to +\infty}\frac{1}{(2N\lfloor t_n\rfloor)^{k-1}}m_1^{L,L}(\w)(u^{ij}_{0,\nu},Q_{\nu}(0,2N\lfloor t_n\rfloor))
\end{equation*}
exists almost surely. To shorten notation we set $\Lambda_n=2Nt_n$ and $\lambda_n=2N\lfloor t_n\rfloor$. For $n$ large enough, we can apply Lemma \ref{stable} to the cube $Q_{\nu}(0,\Lambda_n)$ and singleton family $\{Q_{\nu}(0,\lambda_n)\}$ and obtain
\begin{align*}
m_1^{L,L}(\w)(u^{ij}_{0,\nu},Q_{\nu}(0,\Lambda_n))\leq &m_1^{L,L}(\w)(u^{ij}_{0,\nu},Q_{\nu}(0,\lambda_n))+\mathcal{H}^{k-2}(\partial (Q_{\nu}(0,\lambda_n))\cap\{\nu\}^\perp)
\\
&+C\mathcal{H}^{k-1}((Q_{\nu}(0,\Lambda_n)\backslash \overline{Q_{\nu}(0,\lambda_n)})\cap\{\nu\}^\perp)
\\
\leq &m_1^{L,L}(\w)(u_{0,\nu},Q_{\nu}(0,\lambda_n))+C\Lambda_n^{k-2},
\end{align*}
which yields
\begin{equation}\label{upperbound}
\limsup_{j\to +\infty}\frac{1}{\Lambda_n^{k-1}}m_1^{L,L}(\w)(u^{ij}_{0,\nu},Q_{\nu}(0,\Lambda_n))\leq\phi_{ij}^L(\w;\nu).
\end{equation}
Similarly, one can prove that
\begin{equation}\label{lowerbound}
\phi_{ij}^L(\w;\nu)\leq\liminf_{n\to +\infty}\frac{1}{\Lambda_n^{k-1}}m_1^{L,L}(\w)(u^{ij}_{0,\nu},Q_{\nu}(0,\Lambda_n)).
\end{equation}
Combining (\ref{upperbound}) and (\ref{lowerbound}) yields almost surely the existence of the limit for arbitrary sequences.
\\
\textbf{Substep 2.3} Shift invariance in the probability space\\
Up to neglecting a countable union of null sets we may assume that the limit defining $\phi_{ij}^L(\w;\nu)$ exists for all rational directions $\nu$. We next prove that the function $\w\mapsto\phi_{ij}^L(\w;\nu)$ is invariant under the entire group action $\{\tau_z\}_{z\in\mathbb{Z}^k}$. This will be important to treat the ergodic case but also for the shift invariance in the physical space. Given $z\in\mathbb{Z}^k$ there exists $R=R(L,z)>0$ such that for all $t>0$
\begin{equation}\label{movecube}
Q_{\nu}(0,t)\subset Q_{\nu}(-z,R+t),\quad 2L\leq{\rm dist}(\partial Q_{\nu}(0,t),\partial Q_{\nu}(-z,R+t)).
\end{equation}
Similar to the stationarity of the stochastic process we have
\begin{align*}
\phi_{ij}^L(\tau_z\w;\nu)&\leq\limsup_{t\to +\infty}\frac{1}{(R+t)^{k-1}}m_1^{L,L}(\w)(u^{ij}_{-z,\nu},Q_{\nu}(-z,R+t))
\\
&=\limsup_{t\to +\infty}\frac{1}{t^{k-1}}m_1^{L,L}(\w)(u^{ij}_{-z,\nu},Q_{\nu}(-z,R+t)).
\end{align*}
Due to (\ref{movecube}) we can apply Lemma \ref{stable} to the cube $Q_{\nu}(-z,R+t)$ and the singleton family $\{Q_{\nu}(0,t)\}$ and deduce that there exists a constant $C=C(R,z)$ such that
\begin{equation*}
m_1^{L,L}(\w)(u^{ij}_{-z,\nu},Q_{\nu}(-z,R+t))\leq m_1^{L,L}(\w)(u^{ij}_{0,\nu},Q_{\nu}(0,t))+Ct^{k-2}.
\end{equation*}
Hence we get $\phi_{ij}^L(\tau_z\w;\nu)\leq \phi_{ij}^L(\w;\nu)$. The other inequality can be proven similar so that the limit indeed exists (which we implicitly assumed with our notation) and, for $\mathbb{P}$-almost every $\w\in\Omega$,
\begin{equation}\label{groupinvariant}
\phi_{ij}^L(\tau_z\w;\nu)=\phi_{ij}^L(\w;\nu).
\end{equation}
\textbf{Step 3} Shift invariance in the physical space\\
In this step we prove the existence of the limit defining ${\phi}_{ij}^L(\w;\nu)$ when we blow up a cube not centered in the origin. We further show that it agrees with the one already considered. We start with considering a cube $Q_{\nu}(x,\rho)$ with rational direction $\nu$, $x\in\mathbb{Z}^k\backslash\{0\}$ and $\rho\in\mathbb{Q}$. Given $\e>0$ and $N\in\mathbb{N}$ (not the same one of Step 2.1) we define the events
\begin{equation*}
\mathcal{Q}_N:=\left\{\w\in\Omega:\;\sup_{t\geq\frac{N}{2}}\Bigl|(t\rho)^{1-k}m_1^{L,L}(\w)(u^{ij}_{0,\nu},Q_{\nu}(0,t\rho))-\phi_{ij}^L(\w;\nu)\Bigr|\leq\e\right\}.
\end{equation*}
By Step 2 we know that the function $\mathds{1}_{\mathcal{Q}_N}$ converges almost surely to $\mathds{1}_{\Omega}$ when $N\to +\infty$. Denote by $\mathcal{J}_x$ the $\sigma$-algebra of invariant sets for the measure-preserving map $\tau_x$. Fatou's lemma for the conditional expectation yields
\begin{equation}\label{condexp}
\mathds{1}_{\Omega}=\mathbb{E}[\mathds{1}_{\Omega}|\mathcal{J}_x]\leq\liminf_{N\to +\infty}\mathbb{E}[\mathds{1}_{\mathcal{Q}_N}|\mathcal{J}_x].
\end{equation} 
By (\ref{condexp}), given $\delta>0$, almost surely we find $N_0=N_0(\w,\delta)$ such that
\begin{equation*}
1\geq\mathbb{E}[\mathds{1}_{\mathcal{Q}_{N_0}}|\mathcal{J}_x](\w)\geq 1-\delta.
\end{equation*} 
Now due to Birkhoff's ergodic theorem, almost surely, there exists $n_0=n_0(\w,\delta)$ such that, for any $n\geq \frac{n_0}{2}$,
\begin{equation*}
\bigg|\frac{1}{n}\sum_{l=1}^n\mathds{1}_{\mathcal{Q}_{N_0}}(\tau_{l x}\w)-\mathbb{E}[\mathds{1}_{\mathcal{Q}_{N_0}}|\mathcal{J}_x](\w)\bigg|\leq\delta.
\end{equation*}
Note that the set we exclude will be a countable union of null sets provided $\e\in\mathbb{Q}$.
\\
\hspace*{0,5cm}
For fixed $n\geq\max\{n_0,N_0\}$ we denote by $R$ the maximal integer such that for all $l=n+1,\dots,n+R$ we have $\tau_{lx}(\w)\notin \mathcal{Q}_{N_0}$. In order to bound $R$ let $\tilde{n}$ be the number of ones in the sequence $\{\mathds{1}_{\mathcal{Q}_{N_0}}(\tau_{lx}(\w))\}_{l=1}^{n}$. By definition of $R$ we have
\begin{equation*}
\delta\geq\left|\frac{\tilde{n}}{n+R}-\mathbb{E}[1_{\mathcal{Q}_{N_0}}|\mathcal{J}_x](\w)\right|=\left|1-\mathbb{E}[1_{\mathcal{Q}_{N_0}}|\mathcal{J}_x](\w)+\frac{\tilde{n}-n-R}{n+R}\right|\geq \frac{R+n-\tilde{n}}{n+R}-\delta.
\end{equation*}
Since $n-\tilde{n}\geq 0$ and without loss of generality $\delta\leq\frac{1}{4}$, this provides an upper bound by $R\leq 4n\delta$. 
\\
\hspace*{0,5cm}
So for any $n\geq\max\{n_0,N_0\}$ and $\tilde{R}=6n\delta$ we find $l_n\in [n+1,n+\tilde{R}]$ such that $\tau_{l_nx}(\w)\in \mathcal{Q}_{N_0}$. Then by (\ref{groupinvariant}) and stationarity we have for all $t\geq \frac{N_0}{2}$ that
\begin{equation}\label{epsestimate}
\left|(t\rho)^{1-k}m_1^{L,L}(\w)(u^{ij}_{-l_nx,\nu},Q_{\nu}(-l_nx,t\rho))-\phi_{ij}^L(\w;\nu)\right|\leq\e.
\end{equation}
Define $\beta_n=n+c_L\rho^{-1}|x|(l_n-n)$, where $c_L\in\mathbb{N}$ is chosen such that $Q_{\nu}(-nx,n\rho)\subset Q_{\nu}(-l_nx,\beta_n\rho)$ and ${\rm dist}(\partial Q_{\nu}(-nx,n\rho),\partial Q_{\nu}(-l_nx,\beta_n\rho))>L$. Observe that such $c_L$ exists as $l_n-n\geq 1$. Then each face of the cube $Q_{\nu}(-nx,n\rho)$ has at most distance $(\beta_n-n)\rho=c_L|x|(l_n-n)$ to the corresponding face in $Q_{\nu}(-l_nx,\beta_n\rho)$. Then, for $n$ large enough, we can apply Lemma \ref{stable} to the cube $Q(-l_nx,\beta_n\rho)$ and the singleton family $\{Q_{\nu}(-nx,n\rho)\}$ to obtain
\begin{align}\label{firstergodicestimate}
\frac{m_1^{L,L}(\w)(u^{ij}_{-l_nx,\nu},Q_{\nu}(-l_nx,\beta_n\rho))}{(\beta_n\rho)^{k-1}}&\leq \frac{m_1^{L,L}(\w)(u^{ij}_{-nx,\nu},Q_{\nu}(-nx,n\rho))}{(\beta_n\rho)^{k-1}}+C\tilde{R}(\beta_n\rho)^{-1}\nonumber
\\
&\leq \frac{m_1^{L,L}(\w)(u^{ij}_{-nx,\nu},Q_{\nu}(-nx,n\rho))}{(n\rho)^{k-1}}+6C\delta.
\end{align}
On the other hand we can define $\theta_n=n-c^{\prime}_L\rho^{-1}|x|(l_n-n)$ for a suitable $c^{\prime}_L\in\mathbb{N}$ and deduce from a similar reasoning that
\begin{equation}\label{secondergodicestimate}
\frac{m_1^{L,L}(\w)(u^{ij}_{-nx,\nu},Q_{\nu}(-nx,n\rho))}{(n\rho)^{k-1}}\leq\frac{m_1^{L,L}(\w)(u^{ij}_{-l_nx,\nu},Q_{\nu}(-l_nx,\theta_n\rho))}{(\theta_n\rho)^{k-1}}+6C\delta. 
\end{equation}
Now if $\delta$ is small enough (depending only on $x,L$ and $\rho$) we have $\beta_n\geq \theta_n\geq\frac{n}{2}\geq\frac{N_0}{2}$. Combining (\ref{firstergodicestimate}),(\ref{secondergodicestimate}) and (\ref{epsestimate}) we infer
\begin{equation*}
\limsup_{n\to +\infty}\left|\frac{m_1^{L,L}(\w)(u^{ij}_{-nx,\nu},Q_{\nu}(-nx,n))}{n^{k-1}}-\phi_{ij}^L(\w;\nu)\right|\leq 6C\delta+\e,
\end{equation*}
which yields the claim in (\ref{auxprocess}) for $Q_{\nu}(x,\rho)$ with $x\in\mathbb{Z}^k$ and rational $\nu$ and $\rho$. The extension to arbitrary sequences $t_n\to +\infty$ (and thus to rational centers $x$) can be achieved again by Lemma \ref{stable} comparing first the minimal energy on the two cubes $Q_{\nu}(\lfloor t_n\rfloor x,\lfloor t_n\rfloor\rho)$ and $Q_{\nu}(\lfloor t_n\rfloor x,t_n\rho)$ similar to Substep 2.2 and then the energy on the latter cube with the one on $Q_{\nu}(t_nx,t_n\rho)$ as in Substep 2.3. Eventually the convergence of irrational $\rho$ follows from the estimate
\begin{equation*}
m_1^{L,L}(\w)(u^{ij}_{t_nx,\nu},Q_{\nu}(t_nx,t_n\rho))\leq m_1^{L,L}(\w)(u^{ij}_{t_nx,\nu},Q_{\nu}(t_nx,t_n(\rho-\delta))+Ct_n\delta (t_n\rho)^{k-2},
\end{equation*}
which is a consequence of Lemma \ref{stable} applied to the cube $Q_{\nu}(t_nx,t_n\rho)$ and $\{Q_{\nu}(t_nx,t_n(\rho-\delta))\}$, when one neglects lower-order terms. Choosing $0<\delta_l\to 0$ such that $\rho-\delta_l\in\mathbb{Q}$ then yields
\begin{equation*}
\limsup_n\frac{m_1^{L,L}(\w)(u^{ij}_{t_nx,\nu},Q_{\nu}(t_nx,t_n\rho))}{(t_n\rho)^{k-1}}\leq\phi_{ij}^L(\w;\nu).
\end{equation*}
Using the same argument for the cube $Q_{\nu}(t_nx,t_n(\rho+\delta))$ and the family $\{Q_{\nu}(t_nx,t_n\rho)\}$ we find that the limit exists and agrees with $\phi_{ij}^L(\w;\nu)$. Finally, for irrational centers we can again use a perturbation argument based on Lemma \ref{stable} as we did for proving (\ref{firstergodicestimate}) and (\ref{secondergodicestimate}). We omit the details.
\\ 
\textbf{Step 4} From rational to irrational directions
\\
Now we extend the convergence from rational direction to all $\nu\in S^{k-1}$. As the argument is purely geometric similar to Lemma \ref{stable}, we assume without loss of generality that $x=0$. First note that the set of rational directions is dense in $S^{k-1}$ (as the inverse of the stereographic projection maps rational points to rational directions). Given $\nu\in S^{k-1}$ and a sequence $t_n\to +\infty$ we define
\begin{equation*}
\begin{split}
&\overline{\phi}_{ij}^{L}(\w;\nu)=\limsup_{n\to +\infty}\frac{1}{t_n^{k-1}}m_1^{L,L}(\w)(u^{ij}_{0,\nu},Q_{\nu}(0,t_n)),
\\
&\underline{\phi}_{ij}^{L}(\w;\nu)=\liminf_{n\to +\infty}\frac{1}{t_n^{k-1}}m_1^{L,L}(\w)(u^{ij}_{0,\nu},Q_{\nu}(0,t_n)).
\end{split}
\end{equation*}
Let $\nu\in S^{k-1}\backslash \mathbb{Q}^k$. By the construction of the matrix $A_{\nu}$ in Substep 2.1 we can assume that there exists a sequence of rational directions $\nu_l$ such that $A_{\nu_l}\to A_{\nu}$. Therefore, given $\delta>0$ we find $l_0\in\mathbb{N}$ such that for all $l\geq l_0$ the following properties hold:
\begin{itemize}
	\item[(i)] $Q_{\nu}(0,(1-2\delta))\subset\subset Q_{\nu_l}(0,1-\delta)\subset\subset Q_{\nu}(0,1)$,
	\item[(ii)] $0<\text{d}_{\mathcal{H}}(\{\nu\}^\perp\cap B_2(0),\{\nu_l\}^\perp\cap B_2(0))\leq\delta.$
\end{itemize} 
For fixed $l\geq l_0$ and $n\in\mathbb{N}$ let $u_{n,l}:\Lw\to\S$ be an admissible minimizer for $m_1^{L,L}(\w)(u^{ij}_{0,\nu_l},Q_{\nu_l}(0,(1-\delta)t_n))$. We define a test function $v_n:\Lw\to\S$ setting
\begin{equation*}
v_n(x):=
\begin{cases}
u_{n,l}(x) &\mbox{if $x\in Q_{\nu_l}(0,(1-\delta)t_n)$,}
\\
u_{0,\nu}(x) &\mbox{otherwise.}
\end{cases}
\end{equation*}
Note that if $P_k(x),P_k(y)\in Q_{\nu}(0,t_n)\backslash Q_{\nu_l}(0,(1-\delta)t_l)$ are such that $|x-y|\leq L$ and $v_n(x)\neq v_n(y)$, then by the choice of $l_0$ and (i), for $l$ large enough we have
\begin{equation}\label{cubeboundary}
{\rm dist}\left(P_k(x),(Q_{\nu}(0,t_n)\backslash Q_{\nu}(0,(1-2\delta)t_n))\cap\{\nu\}^\perp\right)\leq L.
\end{equation} 
If $P_k(x)\in Q_{\nu}(0,t_n)\backslash Q_{\nu_l}(0,(1-\delta)t_n)$ and $P_k(y)\in Q_{\nu_l}(0,(1-\delta)t_n)$ with $|x-y|\leq L$ and $v_n(x)\neq v_n(y)$, then, for $l$ large enough one can show that by (ii) either $P_k(x)$ or $P_k(y)$ must lie in the cone
\begin{equation*}
\mathcal{K}(\nu,\nu_l)=\{x\in \mathbb{R}^k:\;\langle x,\nu\rangle\cdot\langle x,\nu_l\rangle\leq 0\}.
\end{equation*}
As the segment $[P_k(x),P_k(y)]$ intersects $\partial Q_{\nu_l}(0,(1-\delta)t_n)$, we conclude that
\begin{equation}\label{diffplanes}
{\rm dist}(P_k(x),\left(\mathcal{K}(\nu,\nu_l)+B_L(0)\right)\cap\partial Q_{\nu_l}(0,(1-\delta)t_n))\leq L.
\end{equation}
By (i) it holds that $v_n\in\mathcal{PC}_{1,u^{ij}_{0,\nu}}^{L}(\w,Q_{\nu}(0,t_n))$ for $n$ large enough. From (\ref{cubeboundary}), (\ref{diffplanes}) and the choice of $l_0$ we deduce that for $l$ large enough
\begin{equation*}
m_1^{L,L}(\w)(u^{ij}_{0,\nu},Q_{\nu}(0,t_n)\leq m_1^{L,L}(\w)(u^{ij}_{0,\nu_l},Q_{\nu_n}(0,(1-\delta)t_n))+C\delta t_n^{k-1}.
\end{equation*}
Dividing the last inequality by $t_n^{k-1}$ and taking the $\limsup$ as $n\to+\infty$ we deduce
\begin{equation*}
\overline{\phi}_{ij}^L(\w;\nu)\leq \phi_{ij}^L(\w;\nu_l)+C\delta.
\end{equation*}
Letting first $l\to +\infty$ and then $\delta\to 0$ yields $\overline{\phi}_{ij}^L(\w;\nu)\leq \liminf_{l}{\phi}_{ij}^L(\w;\nu_l)$.
By a similar argument we can also prove that $\limsup_l{\phi}_{ij}^L(\w;\nu_l)\leq\underline{\phi}_{ij}^L(\w;\nu)$. Hence, we get almost surely the existence of the limit in (\ref{auxprocess}) for all directions $\nu$ and the limit does not depend on $x,\rho$ and the sequence $t_n$.
\\
\textbf{Step 5} Proof of (\ref{claimL})
\\
We claim that $\phi_{ij}^L(\w;\nu)=\phi^L_{{\rm hom}}(\w;s_i,s_j,\nu)$. By the preceding steps this would conclude the proof. First observe that by monotonicity it is enough to show that $\phi^L_{{\rm hom}}(\w;s_i,s_j,\nu)\leq \phi_{ij}^L(\w;\nu)$. Let $t_n\to +\infty$ and fix a cube $Q_{\nu}(x,\rho)$. By a trivial extension argument, for $\eta$ small enough (depending on $\rho$) it holds that
\begin{equation*}
m_1^{\eta t_n,L}(\w)(u^{ij}_{t_nx,\nu},Q(t_nx,t_n\rho))\leq m_1^{L,L}(\w)(u^{ij}_{t_nx,\nu},Q(t_nx,t_n\rho-\eta t_n))+C\eta t_n^{k-1}.
\end{equation*} 
Dividing by $(t_n\rho)^{k-1}$ and letting first $n\to +\infty$ and then $\eta\to 0$ we obtain the claim.
\\
\hspace*{0.5cm}
When the group action is ergodic, the additional statement in Theorem \ref{mainthm2} follows from (\ref{groupinvariant}) since in this case all the functions $\w\mapsto\phi_{ij}^L(\w;\nu)$ are constant and so is the pointwise limit when $L\to +\infty$.
\end{proof}	

\begin{remark}\label{simplerformula}
One can show that the surface tension can be obtained by one single limit procedure. Indeed, referring to (\ref{slowboundary}) and repeating Step 1 and 5 of the proof of Theorem \ref{mainthm2} it follows that
\begin{equation*}
\phi_{{\rm hom}}(\w;s_i,s_j,\nu)=\lim_{t\to +\infty}\frac{1}{t^{k-1}}\inf\Bigl\{E_1(\w)(u,Q_{\nu}(0,t)):\,u\in\mathcal{PC}_{1,u^{ij}_{0,\nu}}^{l_{1/t}}(\w,Q_{\nu}(0,t))\Bigr\}.
\end{equation*}
\end{remark}

\section{Volume constraints in the stationary case}\label{sect-vol}
In this section we will discuss the variational limit of the energies $E_{\e}(\w)$ when, for all $i=1,\dots,q$, we fix the number of lattice points where the configuration takes the value $s_i$. For general thin admissible lattices this might not converge without passing to a further subsequence, so we treat only the case of stationary lattices in the sense of Definition \ref{defstatiolattice}. In order to formulate the result, given $A\in\mathcal{A}^R(D)$ and a family $V_{\e}=\{V_{i,\e}\}_{i=1}^q\in\mathbb{N}^q$, we introduce the class
\begin{equation*}
\mathcal{PC}_{\e}^{V_{\e}}(\w):=\{u:\e\mathcal{L}(\w)\to\S:\;\#\{\e x\in\e\mathcal{L}(\w)\cap P_k^{-1}D:\;u(\e x)=s_i\}=V_{i,\e}\}.
\end{equation*}
Beside the natural compatibility condition $\sum_{i}V_{i,\e}=\#(\e\Lw\cap P_k^{-1}D)$, we assume that for all $i=1,\dots,q$ there exists $V_i>0$ such that
\begin{equation*}
\lim_{\e\to 0}\frac{V_{i,\e}}{\#(\e\mathcal{L}\cap P_k^{-1}D)}= V_i.
\end{equation*}
Note that we exclude the case $V_i=0$ for some $i$. This case contains some non-trivial aspects which are related to the concept of $(B)$-convexity studied in \cite{AmBrII}. Such conditions are not necessarily satisfied by our discrete energies. Of course the extreme case $V_{i,\e}=0$ for all $\e>0$ can be treated by changing the set $\S$ and thus the whole model. 
\\
\hspace*{0.5cm}
The following lemma describes how the volume constraint behaves for sequences with finite energy.

\begin{lemma}\label{convvol}
For $\mathbb{P}$-almost all $\w\in\Omega$ the following statement holds true: For all $u\in BV(D,\S)$ such that there exists a sequence $u_{\e}:\e\mathcal{L}(\w)\to\S$ with $u_{\e}\to u$ in the sense of Definition \ref{defconv} and 
\begin{equation*}
\sup_{\e>0}E_{\e}(\w)(u_{\e})\leq C,\quad\quad\lim_{\e\to 0}\frac{\#\{\e x\in\e\mathcal{L}(\w)\cap P_k^{-1}D:\;u_{\e}(\e x)=s_i\}}{\#\{\e x\in\e\mathcal{L}(\w)\cap P_k^{-1}D\}}=V_i^{\prime}
\end{equation*}
we have 
\begin{equation*}
|\{u=s_i\}|=V_i^{\prime}|D|.
\end{equation*}
\end{lemma}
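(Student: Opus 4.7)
\smallskip

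The plan is to extract from the stationary lattice a positive density $\bar\rho(\w)$ via the ergodic theorem, and then to relate the count $\#\{u_\e=s_i\}$ to the Lebesgue measure $|\{u=s_i\}|$ by combining the $L^1(D)$-convergence $Pu_\e\to u$ with a deterministic, $\e$-uniform density bound on the lattice. By Theorem \ref{ergodicthm} applied to the additive process $\mathcal{I}_k\ni I\mapsto\#(\L(\w)\cap(I\times\R^{d-k}))$, which is integrable and stationary by (\ref{thickness}) and Definition \ref{defstatiolattice}, there exists outside a $\mathbb P$-null set $\bar\rho(\w)\in(0,\infty)$ (positivity comes from property (ii) of Definition \ref{flatadmissible}) such that $\e^k\#(\e\L(\w)\cap P_k^{-1}A)\to\bar\rho(\w)|A|$ for every bounded Borel $A\subset\R^k$ with $|\partial A|=0$. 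Taking $A=D$ and using the hypothesis gives $\e^k\#\{u_\e=s_i\}\to V_i'\bar\rho(\w)|D|$, so the lemma reduces to
\begin{equation}\label{reductionplan}
\e^k\,\#\{\e x\in\e\L\cap P_k^{-1}D\,:\,u_\e(\e x)=s_i\}\ \longrightarrow\ \bar\rho(\w)\,|\{u=s_i\}|,
\end{equation}
after which we divide through by $\bar\rho(\w)>0$.

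\smallskip

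Three ingredients drive the proof. \emph{(a)} A uniform deterministic density bound $\e^k\#(\e\L(\w)\cap P_k^{-1}B)\leq C|B|$ for every Borel $B\subset\R^k$, immediate from Definition \ref{flatadmissible}(i) together with (\ref{thickness}). \emph{(b)} By the path argument in the proof of Lemma \ref{compactness} applied to the bounded energies, the number of ``mixed'' fibers (those $z\in P_k\L$ on which $u_\e$ is not constant) intersecting $P_k^{-1}D$ is at most $C\e^{1-k}$, so their total contribution to $\e^k\#\{u_\e=s_i\}$ is $O(\e)$. \emph{(c)} From the $L^1(D)$-convergence $Pu_\e\to u$ together with $|\{Pu_\e\notin\S\}|\leq C\e$, one gets $\mathds{1}_{\{Pu_\e=s_l\}}\to\mathds{1}_{\{u=s_l\}}$ in $L^1(D)$ for every $l\in\{1,\dots,q\}$.

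\smallskip

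To prove \eqref{reductionplan}, I fix $\gamma,\delta>0$ and partition $D$ into cubes $\{Q_j\}$ of side $\delta$, calling $Q_j$ \emph{type-$i$} if $|\{u=s_i\}\cap Q_j|\geq(1-\gamma)|Q_j|$, \emph{type-$\neg i$} if $|\{u\neq s_i\}\cap Q_j|\geq(1-\gamma)|Q_j|$, and \emph{interface} otherwise. Since $u\in BV(D,\S)$ by Lemma \ref{compactness}, the relative isoperimetric inequality forces each interface cube to carry a $\mathcal{H}^{k-1}$-mass of $\partial^*\{u=s_i\}$ of order at least $c_\gamma\delta^{k-1}$, whence the union of interface cubes has volume $O_\gamma(\delta\,\Hk(S_u))$, vanishing as $\delta\to 0$. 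On a type-$i$ cube I write
\begin{equation*}
\e^k\#\{u_\e=s_i,\,\e P_k(x)\in Q_j\}=\e^k\#(\e\L\cap P_k^{-1}Q_j)-\sum_{l\neq i}\e^k\#\{u_\e=s_l,\,\e P_k(x)\in Q_j\}.
\end{equation*}
The first term tends to $\bar\rho(\w)|Q_j|$; in each summand of the second, contributing points lie either in a pure fiber of value $s_l$ (whose projection is in $\{Pu_\e=s_l\}\cap Q_j$) or in a mixed one, so by \emph{(b)}, \emph{(a)} and then \emph{(c)},
\begin{equation*}
\e^k\#\{u_\e=s_l,\,\e P_k(x)\in Q_j\}\leq C\,|\{Pu_\e=s_l\}\cap Q_j|+O(\e)\ \longrightarrow\ C\,|\{u=s_l\}\cap Q_j|\leq C\gamma|Q_j|.
\end{equation*}
The same chain applied with $l=i$ on a type-$\neg i$ cube bounds $\e^k\#\{u_\e=s_i,\,\e P_k(x)\in Q_j\}$ by $C\gamma|Q_j|+o(1)$; interface cubes are handled trivially by $C|Q_j|$. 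Summing over $j$ and letting, successively, $\e\to 0$, $\delta\to 0$ and $\gamma\to 0$ yields \eqref{reductionplan}, and hence the lemma.

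\smallskip

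The main technical obstacle is that the level sets $\{Pu_\e=s_l\}$ depend on $\e$, so Birkhoff's theorem cannot be invoked on them directly; the purely geometric, $\e$-uniform density bound \emph{(a)} is precisely what converts the strong $L^1$-convergence of the level indicators into the desired convergence of the corresponding lattice-point counts.
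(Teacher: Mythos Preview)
Your ingredient \emph{(a)} is false as stated, and the argument hinges on it. The $r$-separation in Definition \ref{flatadmissible}(i) is a bound in $\R^d$, not in $\R^k$; after projection, up to $C(M/r)^{d-k}$ points of $\L$ can sit over the same $k$-ball of radius $r/2$. What the packing argument actually gives is
\[
\e^k\,\#\bigl(\e\L\cap P_k^{-1}B\bigr)\ \le\ C\,\bigl|B+B^k_{\e r/2}(0)\bigr|,
\]
not $C|B|$. For a fixed set with $|\partial B|=0$ this fattening is harmless, but you apply \emph{(a)} to the $\e$-dependent set $B=\{Pu_\e=s_l\}\cap Q_j$, which is a union of Voronoi cells of the \emph{projected} lattice $\e P_k\L$. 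As the paper itself warns (see the proof of Lemma \ref{sumconv}: ``the Voronoi cells in the projected lattice may become degenerate''), these cells can be arbitrarily thin, so $|B+B^k_{\e r/2}(0)|$ need not be comparable to $|B|$. Concretely, a single projected point already gives $\#(\e\L\cap P_k^{-1}\{\e z\})\ge 1$ with $|\{\e z\}|=0$.

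The repair is exactly the mechanism of Lemma \ref{sumconv}: introduce an auxiliary coarse grid $L\e\,\mathbb{Z}^k$ with $L$ large enough that each coarse cell contains $\Theta(1)$ projected points, split the coarse cells into those where $Pu_\e$ is constant and those where it is not, and use the energy bound to show the latter are $O(\e^{1-k})$ in number (as in (\ref{rarecubes})). On the ``constant'' cells you then legitimately compare point counts with Lebesgue measure. This is precisely what the paper does, packaged via the measures $\gamma_\e(\w)=\sum_z\e^k\#(P_k^{-1}(z)\cap\L)\delta_{\e z}$: it shows $\gamma_\e\overset{*}{\rightharpoonup}\gamma_0(\w)\,\mathrm{Leb}$ and then computes $\int_D Pu_\e\,\mathrm{d}\gamma_\e=\sum_i s_i\,\e^k\#\{u_\e=s_i\}$ directly, controlling $\int_D(Pu_\e-u_\delta)\,\mathrm{d}\gamma_\e$ for a polyhedral approximation $u_\delta$ of $u$ by the Lemma \ref{sumconv} argument. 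Your cube-classification scheme is a reasonable alternative wrapper, but it cannot bypass this coarse-grid step; once you insert it, your proof becomes essentially equivalent to the paper's.
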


\begin{proof}
Up to the transformation $T(s_i)=e_i$ we may assume that the vectors $s_i$ form a basis. For $\w\in\Omega$ we consider the sequence of nonnegative Borel measures $\gamma_{\e}(\w)$ on $D$ defined as
\begin{equation*}
\gamma_{\e}(\w)=\sum_{z\in P_k(\mathcal{L}(\w))\cap\frac{D}{\e}}\e^k\#\left(P_k^{-1}(z)\cap\mathcal{L}(\w)\right)\delta_{\e z}.
\end{equation*}
As $\gamma_{\e}(\w)(D)\leq C|D|$, up to subsequences we know that $\gamma_{\e}(\w)\overset{*}{\rightharpoonup}\gamma(\w)$ in the sense of measures. We now identify the limit measure. To this end we define a discrete stochastic process $\gamma:\mathcal{I}_k\to L^1(\Omega)$ as
\begin{equation}\label{randomdensity}
\gamma(I)(\w):=\sum_{y\in P_k(\mathcal{L}(\w))\cap I}\#\left(P_k^{-1}(y)\cap\mathcal{L}(\w)\right)=\#\left(x\in\mathcal{L}(\w):\;P_k(x)\in I\right).
\end{equation}
It follows from (\ref{localratio}) that $\gamma(I)$ is essentially bounded for every $I\in\mathcal{I}_k$. In addition it can be checked that $\gamma(I)$ is $\mathcal{F}$-measurable, thus we infer that $\gamma(I)\in L^{\infty}(\Omega)$. Upon redefining the group action as $\tilde{\tau}_z=\tau_{-z}$, the process $\gamma$ is stationary and (sub)additive. By Theorem \ref{ergodicthm} there exists $\gamma_0(\w)$ such that for almost every $\w\in\Omega$ and all $I\in\mathcal{I}_k$ we have
\begin{equation*}
\lim_{n\to +\infty}\frac{\gamma(nI)(\w)}{n^k|I|}=\gamma_0(\w).
\end{equation*}
It is straightforward to extend this result to all sequences $t_n\to +\infty$ and then to all cubes in $\R^k$ by a continuity argument. Now let $a,b\in \R^{k}$ and let $Q=[a,b)$. Then by definition
\begin{equation}\label{cubeconvergence}
\lim_{\e\to 0}\gamma_{\e}(\w)(Q)=\lim_{\e\to 0}\sum_{z\in P_k(\mathcal{L}(\w))\cap \frac{1}{\e}Q}\e^k\#\left(P_k^{-1}(z)\cap\mathcal{L}(\w)\right)=\gamma_0(\w)|Q|.
\end{equation}
Given any open set $A\in\mathcal{A}(D)$, for $\delta>0$ we consider the following interior approximation:
\begin{equation*}
A_{\text{int}}(\delta)=\bigcup_{z\in\delta\mathbb{Z}^k:\;z+[0,\delta)^k\subset A}z+[0,\delta)^k.
\end{equation*}
It can be checked by monotone convergence that $\lim_{\delta\to 0}|A(\delta)|=|A|$. By (\ref{cubeconvergence}) and additivity we obtain
\begin{equation*}
\liminf_{\e\to 0}\gamma_{\e}(\w)(A)\geq \liminf_{\e\to 0}\gamma_{\e}(\w)(A(\delta))=\gamma_0(\w)|A(\delta)|.
\end{equation*}
Letting $\delta\to 0$ we obtain $\liminf_{\e}\gamma_{\e}(\w)(A)\geq\gamma_0(\w)|A|$. By the Portmanteau-Theorem we conclude that $\gamma(\w)(B)=\gamma_0(\w)|B|$ for all Borel sets $B\subset D$. In particular the whole sequence converges in the sense of measures. On the other hand, if $A\in\mathcal{A}(D)$ is such that $|\partial A|=0$, then the outer approximation
\begin{equation*}
A_{\text{out}}(\delta)=\bigcup_{z\in\delta\mathbb{Z}^k:\;z+[0,\delta)^k\cap A\neq\emptyset}z+[0,\delta)^k
\end{equation*}
also fulfills $\lim_{\delta\to 0}|A(\delta)|=|A|$, hence 
\begin{equation}\label{setconvergence}
\lim_{\e\to 0}\gamma_{\e}(\w)(A)=\gamma_0(\w)|A|
\end{equation}
for all $A\in\mathcal{A}(D)$ such that $|\partial A|=0$. Given now $\delta>0$, we take any polyhedral function $u_{\delta}\in BV_{loc}(\R^k,\S)$ such that $\|u-u_{\delta}\|_{L^1(D)}\leq\delta$. As $u_{\delta}$ is Borel-measurable, we have 
\begin{equation*}
\int_DP{u}_{\e}\,\mathrm{d}\gamma_{\e}(\w)=\int_D(P{u}_{\e}-u_{\delta})\,\mathrm{d}\gamma_{\e}(\w)+\int_D u_{\delta}\,\mathrm{d}\gamma_{\e}(\w).
\end{equation*}
Since $u_{\delta}$ is a polyhedral function, we can use (\ref{setconvergence}) to obtain \begin{equation}\label{testpolyhedral}
\lim_{\e\to 0}\int_Du_{\delta}\,\mathrm{d}\gamma_{\e}(\w)=\gamma_0(\w)\int_Du_{\delta}\,\mathrm{d}x.
\end{equation}
What concerns the first term, by (\ref{thickness}) and the regularity of $S_{u_{\delta}}$ and $\partial D$ we have
\begin{align}
\left|\int_D(P{u}_{\e}-u_{\delta})\,\mathrm{d}\gamma_{\e}(\w)\right|&\leq C\sum_{z\in P_k(\mathcal{L}(\w))\cap \frac{D}{\e}}\e^k|P{u}_{\e}(\e z)-u_{\delta}(\e z)|\label{inttosum}
\end{align}
Now using the fact that ${u}_{\e}$ has equibounded energy, one can reason as in the proof of Lemma \ref{sumconv} to show that
\begin{equation*}
\limsup_{\e\to 0}\sum_{z\in P_k(\mathcal{L}(\w))\cap \frac{D}{\e}}\e^k|P{u}_{\e}(\e z)-u_{\delta}(\e z)|\leq C\|u-u_{\delta}\|_{L^1(D)}\leq C\delta.
\end{equation*}
Combining the above inequality with (\ref{testpolyhedral}) and (\ref{inttosum}) we finally obtain by the arbitrariness of $\delta$ that
\begin{equation*}
\lim_{\e\to 0}\int_DP{u}_{\e}\,\mathrm{d}\gamma_{\e}(\w)=\gamma_0(\w)\int_Du\,\mathrm{d}x=\gamma_0(\w)\sum_{i=1}^qs_i|\{u=s_i\}|
\end{equation*}
On the other hand, plugging in the definition and using again (\ref{setconvergence}), it holds
\begin{align*}
\lim_{\e\to 0}\int_DP{u}_{\e}\,\mathrm{d}\gamma_{\e}(\w)&=\lim_{\e\to 0}\sum_{i=1}^qs_i\#\{\e x\in\e\mathcal{L}(\w)\cap D:\,u_{\e}(\e x)=s_i\}\e^k\\
&=\sum_{i=1}^qs_iV_i^{\prime}|D|\gamma_0(\w).
\end{align*}
Since we assume the $s_i$ to form a basis we conclude the proof.
\end{proof}

In order to include the volume constraint in the functional, for almost every $\w\in\Omega$ we introduce $E_{\e}^{V_{\e}}(\w):\mathcal{PC}_{\e}(\w)\to[0,+\infty]$ as
\begin{equation*}
E_{\e}^{V_{\e}}(\w)(u)=
\begin{cases}
E_{\e}(\w)(u) &\mbox{if $u\in \mathcal{PC}_{\e}^{V_{\e}}(\w)$,}
\\
+\infty &\mbox{otherwise.}
\end{cases}
\end{equation*}
With the help of Lemma \ref{convvol} we can now prove the following theorem.

\begin{theorem}
Let $\mathcal{L}$ be a stationary stochastic lattice and let $f_{nn}$ and $f_{lr}$ satisfy Hypotheses 1 and 2. For $\mathbb{P}$-almost every $\w$ the functionals $E_{\e}^{V_{\e}}(\w)$ $\Gamma$-converge with respect to the convergence of Definition \ref{defconv} to the functional $E_{{\rm hom}}^{V}(\w):L^1(D,\R^q)\to[0,+\infty]$ defined by
\begin{equation*}
E_{{\rm hom}}^V(\w)(u)=
\begin{cases}\displaystyle
\int_{S_u}\phi_{{\rm hom}}(\w;u^+,u^-,\nu_u)\,\mathrm{d}\mathcal{H}^{k-1} &\mbox{if $u\in BV(D,\S)$ and $|\{u=s_i\}|=V_i|D|$ for all $i$},\\
+\infty &\mbox{otherwise.}
\end{cases}
\end{equation*}
\end{theorem}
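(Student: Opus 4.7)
Both inequalities are deduced from Theorem \ref{mainthm2} with Lemma \ref{convvol} as the bridge; the only genuine work is a surgery on the recovery sequence that enforces the exact volume constraint at negligible energy cost. For the $\Gamma$-liminf, if $u_\e \to u$ in $D$ with $u_\e \in \mathcal{PC}_\e^{V_\e}(\w)$ and $\sup_\e E_\e(\w)(u_\e) < +\infty$, the volume constraint forces
\begin{equation*}
\frac{\#\{u_\e = s_i\}}{\#(\e\Lw \cap P_k^{-1}D)} = \frac{V_{i,\e}}{\#(\e\Lw \cap P_k^{-1}D)} \to V_i,
\end{equation*}
so Lemma \ref{convvol} yields $|\{u = s_i\}| = V_i|D|$ for each $i$. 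Since $E_\e^{V_\e}(\w) = E_\e(\w)$ on $\mathcal{PC}_\e^{V_\e}(\w)$, the desired lower bound is inherited from Theorem \ref{mainthm2}.

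\textbf{Upper bound, setup and cleaning.} We may assume that $u$ satisfies the volume constraint. Approximate $u$ by polyhedral functions $u_n \to u$ in $L^1(D)$ with $\Hk(S_{u_n}) \to \Hk(S_u)$ and $|\{u_n = s_i\}| \to V_i|D|$ via Corollary 2.4 of \cite{BrCoGa}. Fix $n$ and let $\hat v_{n,\e} \to u_n$ be a recovery sequence provided by Theorem \ref{mainthm2}, with $\lim_\e E_\e(\w)(\hat v_{n,\e}) = E_{\mathrm{hom}}(\w)(u_n)$. For each $i$ with $V_i > 0$ pick a compact set $K_{i,n} \subset \mathrm{int}(\{u_n = s_i\})$ of positive measure and clean the recovery sequence by setting $v_{n,\e}(\e x) := s_i$ whenever $\e P_k(x) \in K_{i,n}$, and $v_{n,\e} := \hat v_{n,\e}$ elsewhere. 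Since $K_{i,n}$ is in the interior of the level set, $\hat v_{n,\e} \to s_i$ in $L^1$ on an open neighbourhood of $K_{i,n}$, so the fraction $\eta_{n,\e}$ of lattice points in a discrete strip of width $O(\e)$ around $\partial K_{i,n}$ where $\hat v_{n,\e} \neq s_i$ tends to $0$. Combining Hypothesis 1 with Lemma \ref{longtoshort}, the additional interactions introduced by the cleaning are bounded by $C\sum_i \eta_{n,\e}\mathcal{H}^{k-1}(\partial K_{i,n}) = o(1)$, while the interior interactions inside $K_{i,n}$ can only decrease; hence $\limsup_\e E_\e(\w)(v_{n,\e}) \leq E_{\mathrm{hom}}(\w)(u_n)$ and $v_{n,\e} \to u_n$ in $L^1(D)$.

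\textbf{Volume surgery.} Set $\sigma_{i,n,\e} := V_{i,\e} - \#\{v_{n,\e} = s_i\}$; by Lemma \ref{convvol} and the choice of $u_n$ the ratio $\sigma_{i,n,\e}/\#(\e\Lw \cap P_k^{-1}D)$ tends to $V_i - |\{u_n = s_i\}|/|D|$, which is $o_n(1)$, so for any $n = n(\e) \to \infty$ chosen slow enough one has $|\sigma_{i,n,\e}| = o(\e^{-k})$, while $\sum_i \sigma_{i,n,\e} = 0$. For each $i$ with $\sigma_{i,n,\e} < 0$ pick a sub-ball $B_{i,n,\e} \subset K_{i,n}$ of radius $\rho_{i,n,\e} := C_0(|\sigma_{i,n,\e}|\e^k/\gamma_0(\w))^{1/k} \to 0$; for $\e$ small this sub-ball lies in $K_{i,n}$ and contains at least $|\sigma_{i,n,\e}|$ lattice projections, all bearing the value $s_i$ under $v_{n,\e}$ thanks to the cleaning step. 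Reassign exactly $|\sigma_{i,n,\e}|$ of them to values $s_j$ with $\sigma_{j,n,\e} > 0$, in proportions matching the deficits and in a geometrically clustered way (e.g.\ by splitting $B_{i,n,\e}$ into sub-cells, one per target value). The resulting $u_{n,\e} \in \mathcal{PC}_\e^{V_\e}(\w)$ converges to $u_n$ in $L^1(D)$ because the modified mass has measure $O(\max_i \rho_{i,n,\e}^k) \to 0$, and the additional energy is controlled by the lattice edges crossing $\partial B_{i,n,\e}$ (and the internal partition boundaries), of cardinality $O(\rho_{i,n,\e}^{k-1}\e^{1-k})$ by admissibility of $\Lw$; invoking Lemma \ref{longtoshort} to reduce long-range to nearest-neighbour interactions, the total extra energy is $O(\sum_i \rho_{i,n,\e}^{k-1}) = o(1)$. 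Therefore $\limsup_\e E_\e^{V_\e}(\w)(u_{n,\e}) \leq E_{\mathrm{hom}}(\w)(u_n)$.

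\textbf{Conclusion and main obstacle.} A Reshetnyak-type continuity for partition functionals (cf.\ the end of the proof of Theorem \ref{constrainedproblem} and \cite{R16}) combined with $u_n \to u$ in $L^1(D)$ and $\Hk(S_{u_n}) \to \Hk(S_u)$ yields $E_{\mathrm{hom}}(\w)(u_n) \to E_{\mathrm{hom}}(\w)(u)$, and a standard diagonal argument $u_\e := u_{n(\e),\e}$ produces the required recovery sequence with $\limsup_\e E_\e^{V_\e}(\w)(u_\e) \leq E_{\mathrm{hom}}(\w)(u)$. The principal technical obstacle is quantitative: a naive single-point flip of the recovery sequence to correct one unit of volume would cost $O(\e^{k-1})$ in energy, so the $o(\e^{-k})$ required flips would globally cost $O(\e^{-1})$ and blow up. This forces us both to cluster the flips into a single sub-ball of vanishing diameter, reducing the surface cost to $O(\rho_{i,n,\e}^{k-1}) = o(1)$, and to ensure that the cluster lies in a region where the configuration is pointwise $s_i$ so that no uncontrolled interior interactions appear -- which is exactly what the polyhedral approximation plus the cleaning step makes possible.
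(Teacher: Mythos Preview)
Your liminf argument matches the paper's. The upper bound, however, takes a different route and carries a technical gap.

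\textbf{Comparison with the paper's approach.} The paper does \emph{not} pass through a polyhedral approximation of $u$. Instead it first treats $u\in BV(D,\S)$ that already satisfies the constraint $|\{u=s_i\}|=V_i|D|$ and has an interior point in each level set. For such $u$ one fixes small disjoint balls $B_\eta(x_i^l)\subset\{u=s_i\}$, takes an unconstrained recovery sequence, forces it to equal $s_i$ on these balls via the averaging/gluing argument of Proposition~\ref{subadditive}, and then performs the surgery. The key simplification is that, because $u$ itself satisfies the constraint, Lemma~\ref{convvol} gives $\tilde V_{i,\e}-V_{i,\e}=o(\e^{-k})$ directly: the surgery region has side $\e h_\e$ with $h_\e\e\to 0$, hence energy cost $O((\e h_\e)^{k-1})=o(1)$. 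The general $u$ is then reached by the classical density of constrained functions with interior points (\cite{AmBrI}), which preserves the volume constraint and only needs lower semicontinuity of the $\Gamma$-limsup to close. Your scheme instead approximates by polyhedral $u_n$ that do \emph{not} satisfy the constraint, so the deficit $\sigma_{i,n,\e}$ is of order $\e^{-k}$ for fixed $n$; you then rely on a diagonal $n(\e)\to\infty$ and Reshetnyak continuity of $E_{\mathrm{hom}}$ to kill both the mismatch and the approximation error. This is legitimate (the homogenized integrand is indeed continuous, cf.\ Section~\ref{bvproblems}), but it is a strictly longer path: the paper avoids Reshetnyak entirely here.

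\textbf{A genuine gap in the cleaning step.} Your claim that ``the fraction $\eta_{n,\e}$ of lattice points in a discrete strip of width $O(\e)$ around $\partial K_{i,n}$ where $\hat v_{n,\e}\neq s_i$ tends to $0$'' does not follow from $L^1$-convergence plus equiboundedness of energy. A recovery sequence can concentrate its ``wrong'' points in an $\e$-neighbourhood of $\partial K_{i,n}$ (a set of measure $O(\e)$, compatible with $L^1$-convergence), while still having bounded energy, so $\eta_{n,\e}$ need not vanish. What does work is the averaging trick of Proposition~\ref{subadditive}: one considers $N_\e\sim\e^{-1}$ nested candidate boundaries for $K_{i,n}$ and picks the one with least crossing energy, which is then $O(N_\e^{-1})=O(\e)$ plus an $L^1$ remainder controlled by Lemma~\ref{sumconv}. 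This is exactly how the paper justifies that the recovery sequence may be assumed constant on the reference balls; you should invoke that argument rather than the pointwise claim on $\eta_{n,\e}$.
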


\begin{proof}
The lower bound follows from Theorem \ref{mainthm2} and Lemma \ref{convvol}. In order to prove the upper bound, for the moment assume that $u\in BV(D,\S)$ satisfies the volume constraint and that each level set $\{u=s_i\}$ contains an interior point. In particular, in each level set we find $q$ disjoint open balls $B_{\eta}(x_i^l)\subset\subset\{u=s_i\}$ with $\eta<<1$. By Theorem \ref{mainthm2} we can find a sequence  $u_{\e}:\e\mathcal{L}(\w)\to\S$ such that $u_{\e}$ converges to $u$ in the sense of Definition \ref{defconv} and
\begin{equation}\label{recoverysequence}
\lim_{\e\to 0}E_{\e}(\w)(u_{\e})=E_{{\rm hom}}(\w)(u).
\end{equation}
Repeating the argument used for proving Proposition \ref{subadditive} one can show that without loss of generality we may assume that $u_{\e}(\e x)=s_i$ for all $\e x\in\e\Lw\cap B_{\eta}(x_i^l)$ and that $u_{\e}$ has equibounded energy on a large cube $Q_D$ containing $\overline{D}$. For each $i$ set $\tilde{V}_{i,\e}=\#\{\e x\in\e\mathcal{L}(\w)\cap P_k^{-1}D:\,u_{\e}(\e x)=s_i\}$. Applying Lemma \ref{convvol} we deduce that
\begin{equation}\label{minrate}
\lim_{\e\to 0}\frac{\tilde{V}_{i,\e}-V_{i,\e}}{\#\{\e x\in\e\mathcal{L}(\w)\cap P_k^{-1}D\}}=0.
\end{equation}
We now adjust the sequence $u_{\e}$ so that it belongs to $\mathcal{PC}_{\e}^{V_{\e}}(\w)$. This will be done locally on the balls $B_{\eta}(x_i^l)$. First we change the values on $B_{\eta}(x_1^1)$ and $B_{\eta}(x_2^1)$ so that the sequence satisfies the constraint for $i=1$. In general, for $i<q$ we change the sequence on $B_{\eta}(x_i^i)$ and $B_{\eta}(x_{i+1}^i)$ so that it satisfies the constraints for all $j\leq i$. At the end the constraint for $i=q$ follows by the compatibility assumption. Each modification will be such that $L^1$-convergence and convergence of the energies is conserved. We will provide the construction only for the first step. In what follows we consider the case $\tilde{V}_{1,\e}>V_{1,\e}$. We set $h_{\e}=(\tilde{V}_{1,\e}-V_{1,\e})^{\frac{1}{k}}$. Up to modifying $u_{\e}$ on a set of lattice points with diverging cardinality much less than $\e^{1-k}$ and contained in the complement of the union of the balls $B_{\eta}(x_i^l)$ (which yields again a recovery sequence), we may assume that $h_{\e}\to +\infty$.
\\
\hspace*{0,5cm}
Observe that (\ref{minrate}) and the properties of a thin admissible lattice imply that 
\begin{equation}\label{limith}
\lim_{\e\to 0}h_{\e}\e=0.
\end{equation}
We already know from the proof of Lemma \ref{convvol} that, almost surely, we can write
\begin{equation*}
q^{\w}(x_1^1,h_{\e}):=\#\{x\in\mathcal{L}(\w):\;P_k(x)\in Q_{e_1}(x_1^1,\gamma_0(\w)^{-1}h_{\e})\}=h_{\e}^k+h_{\e}^{k-1}\gamma_{\e},
\end{equation*}
for some sequence $\gamma_{\e}=\gamma_{\e}(\w,x_1^1)$ such that $\lim_{\e\to 0}\frac{\gamma_{\e}}{h_{\e}}=0$. In the following we assume that $\gamma_{\e}\leq 0$, but with a similar argument we can also treat the case $\gamma_{\e}>0$. As $\mathcal{L}(\w)$ is thin admissible in the sense of Definition \ref{flatadmissible}, one can show that for some appropriate $c=c(R)>0$ it holds true that
\begin{equation*}
\frac{1}{C}h_{\e}^{k-1}\leq q^{\w}(x_0,h_{\e}+n+c)-q^{\w}(x_0,h_{\e}+n)\leq Ch_{\e}^{k-1}
\end{equation*}
for any $0\leq n\leq h_{\e}$. In particular, there exist $n_{\e}=\mathcal{O}(\gamma_{\e})$ and nonnegative equibounded $c_{\e}$ such that
\begin{equation}\label{adjustedcube}
q^{\w}(x_0,h_{\e}+n_{\e})=h_{\e}^k+ c_{\e}h_{\e}^{k-1}.
\end{equation}
Now choose any set $G_{\e}\subset\R^d$ such that $P_kG_{\e}\subset B_{\eta}(x_2^1)$ and $\#\left(G_{\e}\cap\mathcal{L}(\w)\right)=c_{\e}h_{\e}^{k-1}$. To reduce notation, set $Q_{\e}:=Q_{e_1}(x_1^1,\gamma_0(\w)^{-1}\e(h_{\e}+n_{\e}))$. We define 
\begin{equation*}
\bar{u}_{\e}(\e x)=
\begin{cases}
s_2 &\mbox{if $\e P_k(x)\in Q_{\e}$,}\\
s_1 &\mbox{if $\e x\in G_{\e}$,}\\
u_{\e}(\e x) &\mbox{otherwise.}
\end{cases}
\end{equation*}
Note that by (\ref{limith}) we have $Q_{\e}\subset\subset B_{\eta}(x_1^1)$ for $\e$ small enough and therefore $\#\{\e x\in\e\Lw\cap P_k^{-1}D:\;\bar{u}(\e x)=s_1\}=V_{1,\e}$. Again by (\ref{limith}) we still have that $\bar{u}_{\e}\to u$ in the sense of Definition \ref{defconv}.  From Hypothesis 1 we deduce
\begin{align}\label{recenergy}
E_{\e}(\w)(\bar{u}_{\e})\leq& E_{\e}(\w)(u_{\e})+C\sum_{\xi\in\rzd_M}J_{lr}(|\hat{\xi}|)\#(G_{\e}\cap\e\Lw)\e^{k-1}\nonumber
\\
&+\sum_{\xi\in\rzd_M}\sum_{\substack{\a\in R^{\xi}_{\e}(D)\\ \e P_k([x_{\a}, x_{\a+\xi}])\cap\partial Q_{\e}\neq\emptyset}}\e^{k-1}f_{\e}(x_{\a},x_{\a+\xi},\bar{u}_{\e}(\e x_{\a}),\bar{u}_{\e}(\e x_{\a+\xi})).
\end{align} 
It remains to bound the last term since the second one vanishes by (\ref{limith}) and integrability of $J_{lr}$. We split the interactions according to (\ref{lrdecay}). By Lemma \ref{longtoshort} and Hypothesis 1, for $\e$ small enough we have by construction
\begin{align}\label{volshortrange}
&\sum_{|\xi|\leq L_{\delta}}\sum_{\substack{\a\in R^{\xi}_{\e}(D)\\ \e P_k([x_{\a}, x_{\a+\xi}])\cap\partial Q_{\e}\neq\emptyset}}\e^{k-1}f_{\e}(x_{\a},x_{\a+\xi},\bar{u}_{\e}(\e x_{\a}),\bar{u}_{\e}(\e x_{\a+\xi}))\nonumber\\
&\leq C\sum_{|\xi|\leq L_{\delta}}J_{lr}(|\hat{\xi}|)|\xi|\sum_{\substack{(x,y)\in\mathcal{NN}(\w)\\ \e x,\e y\in B_{\eta}(x_1^1)}}\e^{k-1}f_{\e}(x,y,\bar{u}_{\e}(\e x),\bar{u}_{\e}(\e y))
\leq C\mathcal{H}^{k-1}(\partial Q_{\e}) \leq C (\e h_{\e})^{k-1},
\end{align}
so that the left hand side vanishes when $\e\to 0$. To control the remaining interactions, recall that $u_{\e}$ has finite energy on the larger cube $Q_D$. Hence Lemma \ref{longtoshort} and Hypothesis 1 yield
\begin{align*}
&\sum_{|\xi|>L_{\delta}}\sum_{\substack{\a\in R^{\xi}_{\e}(D)\\ \e P_k([x_{\a}, x_{\a+\xi}])\cap\partial Q_{\e}\neq\emptyset}}\e^{k-1}f_{\e}(x_{\a},x_{\a+\xi},\bar{u}_{\e}(\e x_{\a}),\bar{u}_{\e}(\e x_{\a+\xi}))
\\
&\leq C\delta\sum_{\substack{(x,y)\in\mathcal{NN}(\w)\\ \e x,\e y\in Q_D}}\e^{k-1}f_{\e}(x,y,\bar{u}_{\e}(\e x),\bar{u}_{\e}(\e y))
\\
&\leq C\delta\left(E_{\e}(\w)(u_{\e},Q_D)+\mathcal{H}^{k-1}(\partial Q_{\e})+\# (G_{\e}\cap\e\Lw)\e^{k-1}\right)\leq C\delta.
\end{align*}
As $\delta>0$ was arbitrary, we infer from (\ref{recoverysequence}), (\ref{recenergy}) and (\ref{volshortrange}) that
\begin{equation*}
\limsup_{\e\to 0}E_{\e}(\w)(\bar{u}_{\e})=\limsup_{\e\to 0}E_{\e}(\w)(u_{\e})=E_{\rm hom}(\w)(u).
\end{equation*}
The case when $V^{\prime}_{\e}\leq V_{\e}$ can be treated by an almost symmetric argument. Repeating this construction for the remaining phases as described at the beginning of this proof, we obtain 
\begin{equation*}
\Gamma\hbox{-}\limsup_{\e\to 0}E_{\e}^{V_{\e}}(\w)(u)=E_{{\rm hom}}(\w)(u).
\end{equation*}
Now for a general $u\in BV(D,\S)$ such that $|\{u=s_i\}|=V_i|D|$, the statement follows by density. This procedure is classical (see \cite{AmBrI}) and therefore we omit the details.
\end{proof}

\section{A model for random deposition}\label{sec-example}
The general homogenization result proved in Section \ref{homogenization} describes only the qualitative phenomenon that interfaces may form on the flat subspace. In this final section we investigate the asymptotic behavior of the limit energy as a function of the average thickness. To simplify matter, we consider a $3$d to $2$d dimension reduction problem in which magnetic particles are deposited with vertical order on a two-dimensional flat substrate and interact via finite-range ferromagnetic interactions of Ising-type, which means in particular that $\S=\{\pm 1\}$.
We obtain information on the dependence of the limit energy on the average thickness when the latter is very small or very large. \\

\hspace*{0,5cm}
In order to model the substrate where the particles are deposited, we take a two-dimensional deterministic lattice, which we choose for simplicity to be $\mathcal{L}^0=\mathbb{Z}^2\times\{0\}$. We then consider an independent random field $\{X_i^p\}_{i\in\mathbb{Z}^3}$, where the $X_i^p$ are Bernoulli random variables with $\mathbb{P}(X_i^p=1)=p\in(0,1)$ and, for fixed $M\in\mathbb{N}$, we define a random point set as follows:
\begin{equation}\label{randomdeposition}
\mathcal{L}_p^M(\w):=\left\{(i_1,i_2,i_3)\in\mathbb{Z}^3:\;0\leq i_3\leq\sum_{k=1}^MX_{(i_1,i_2,k)}^p(\w)\right\},
\end{equation}
\begin{figure}[h!]
\centerline{\includegraphics [width=3in]{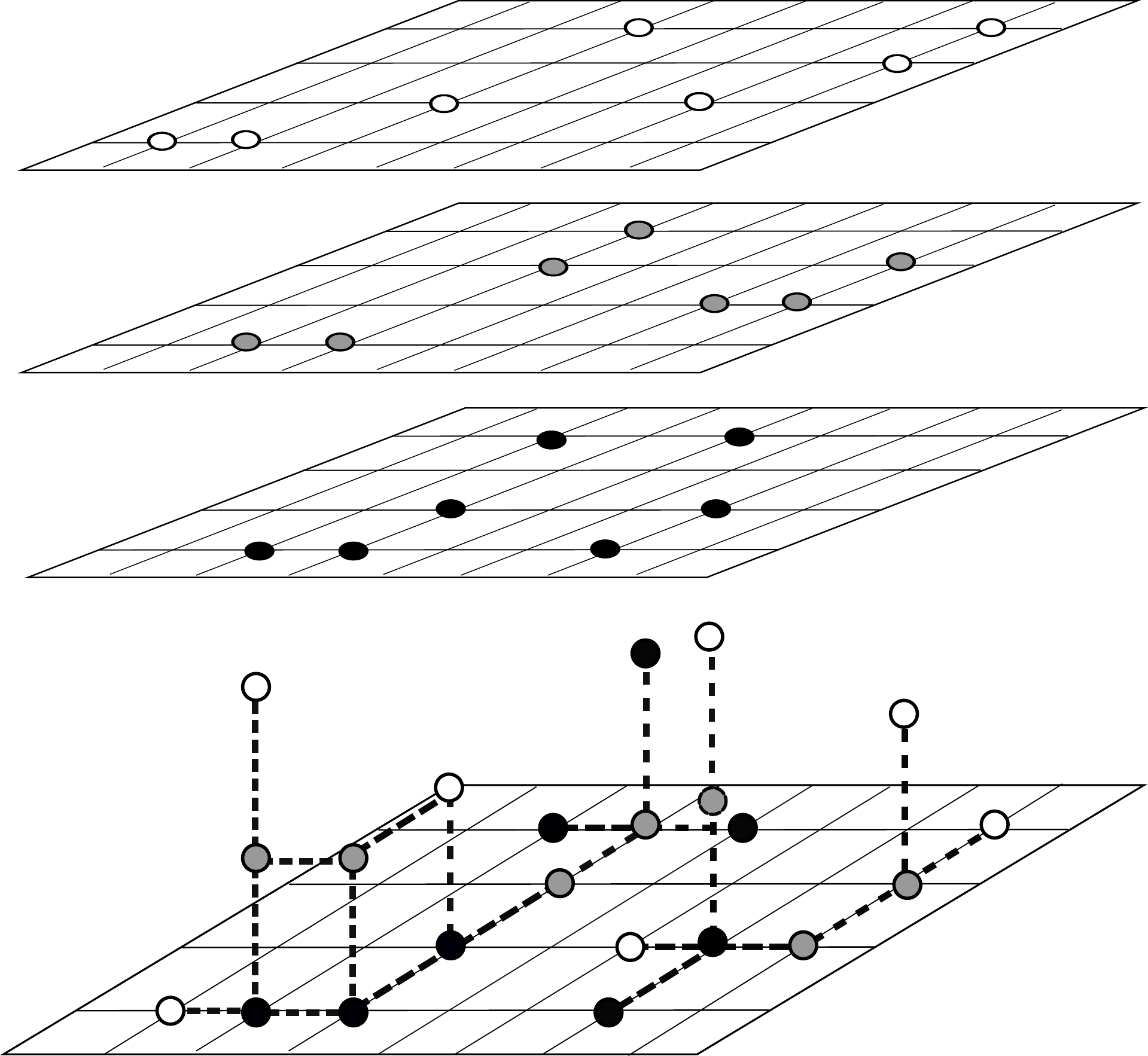}}
\caption{Three successive deposition steps (black, grey and white) in the construction of $\mathcal{L}_p^M(\w)$. The dashed bonds connect nearest neighbouring particles.} \label{deposition}
\end{figure}
which means that we successively deposit particles $M$ times independently on the flat lattice $\mathcal{L}^0$ and stack them over each other (see Figure \ref{deposition}). Note that the point set constructed in (\ref{randomdeposition}) is stationary with respect to integer translations in $\mathbb{Z}^2$ and ergodic by the independence assumption. Given $u:\e\mathcal{L}_p^M(\w)\to \{\pm 1\}$, we consider an energy of the form
\begin{equation}\label{depositionenergy}
E^p_{\e,M}(\w)(u,A)=\sum_{\substack{x,y\in\mathcal{L}_p^M(\w)\\ P_2(x),P_2(y)\in \frac{A}{\e}}}\e c(x-y)|u(\e x)-u(\e y)|,
\end{equation}
where the interaction $c:\R^3\to [0,+\infty)$ fulfills
\begin{itemize}
	\item[(i)] $c(z)\leq C$ for all $z\in\R^3$,
	\item[(ii)] $c(z)=0$ if $|z|\geq L$,
	\item[(iii)] $c(z)\geq c_0>0$ if $|z|=1$.
\end{itemize}
\begin{remark}\label{weakercoercivity}
Coefficients as above satisfy Hypothesis 2, but in general are not coercive as required in Hypothesis 1. However the results obtained in the first part of this paper still hold true. This is due to the vertical order of the deposition model which makes the proof of coercivity much simpler. However note that for instance the constant in Lemma \ref{longtoshort} now depends strongly on $M$. 	
\end{remark}
Due to Remark \ref{weakercoercivity} we can apply Theorem \ref{mainthm2} and thus we know that there exists the effective (deterministic) surface tension 
\begin{equation*}
\phi^p_{{\rm hom}}(M;\nu):=\lim_{t\to +\infty}\frac{1}{t}\inf\{E^p_{1,M}(\w)(v,Q_{\nu}(0,t)):\;v(x)=u_{0,\nu}(P_2(x))\text{ if }\dist(P_2(x),\partial Q_{\nu}(0,t))\leq 2L\},
\end{equation*}
where we used the alternative formula in Remark \ref{simplerformula} and Remark \ref{remarkfiniterange}. Note that due to symmetry reasons the surface tension does not depend on the traces (see also \cite{ACR}).
\\
\hspace*{0,5cm}
We are interested in the asymptotic behavior of $\phi^p_{\text{hom}}(M;\nu)$ when $M\to +\infty$. First, we define some auxiliary quantities. Given $p\in(0,1]$, $0\leq N<M$ and $u:\mathbb{Z}^3\to\{\pm 1\}$ we set
\begin{equation*}
E^{p}_{[N,M]}(\w)(u,O):=\sum_{\substack{x,y\in\mathcal{L}^M_p(\w) \\ x,y\in O\times [N,M]}}c(x-y)|u(x)-u(y)|
\end{equation*}
and omit the dependence on $\w$ of $E^{p}_{[N,M]}$ when $p=1$. In that case, given $\nu\in S^1$ we further introduce the corresponding surface tension
\begin{equation*}
\phi^{1,M}(\nu)=\lim_{t\to +\infty}\frac{1}{t}\inf\{E^{1}_{[0,M]}(u,Q_{\nu}(0,t)):\;v(x)=u_{0,\nu}(P_2(x)) \text{ if }\dist(P_2(x),\partial Q_{\nu}(0,t))\leq 2L\}.
\end{equation*}
Note that the existence of this limit follows by standard subadditivity arguments. The next lemma shows that the auxiliary surface tensions converge when $M\to +\infty$.
\begin{lemma}\label{auxlemma}
For any $\nu\in S^1$ there exists the limit
\begin{equation*}
\phi^1(\nu):=\lim_{M\to +\infty}\frac{1}{M}\phi^{1,M}(\nu).
\end{equation*}
\end{lemma}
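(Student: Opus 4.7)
The plan is to establish an almost-subadditive inequality
\begin{equation*}
\phi^{1,M+N+K}(\nu) \leq \phi^{1,M}(\nu) + \phi^{1,N}(\nu) + C_{*}
\end{equation*}
with $K=2L+1$ and a constant $C_{*}$ depending only on $L$ and the interaction $c$, and then to conclude by a standard Fekete argument. Together with the a priori upper bound $\phi^{1,M}(\nu) \leq C_{0} M$ (obtained by testing the infimum with $u(x)=u_{0,\nu}(P_{2}(x))$ and using the finite range of $c$), iterating the inequality gives $\limsup_{n}\phi^{1,n}(\nu)/n \leq (\phi^{1,M}(\nu)+C_{*})/(M+K)$ for every $M$; letting $M\to\infty$ along a subsequence realising $\liminf_{n}\phi^{1,n}(\nu)/n$ forces $\limsup=\liminf$, so $\phi^{1}(\nu)$ exists.

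The key input is the plane-like minimizer result proved in the appendix (the adaptation of Caffarelli--de la Llave to the present dimension-reduction framework). From it I extract the following statement: there exists $W_{0}>0$, independent of $M$ and $t$, such that for every $M\geq 0$ and every sufficiently large $t$ the infimum defining $\phi^{1,M}(\nu,t)$ admits an almost-minimizer $u_{M}^{*}$ with
\begin{equation*}
u_{M}^{*}(x)=u_{0,\nu}(P_{2}(x)) \quad\text{whenever}\quad \dist(P_{2}(x),\{\nu\}^{\perp})\geq W_{0}.
\end{equation*}
In other words, outside a tube of fixed width $W_{0}$ around the hyperplane $\{\nu\}^{\perp}$ the almost-minimizer coincides with the ``ground state'' $u_{0,\nu}\circ P_{2}$, uniformly in $M$ and $t$.

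To prove the subadditive inequality I would glue $u_{M}^{*}$ and $u_{N}^{*}$ across a buffer of thickness $2L$: on $\mathcal{L}^{M+N+2L+1}_{1}$, define $u(x)=u_{M}^{*}(x)$ for $x_{3}\in[0,M]$, $u(x)=u_{0,\nu}(P_{2}(x))$ for $x_{3}\in[M+1,M+2L]$, and $u(x)=u_{N}^{*}(x-(0,0,M+2L+1))$ for $x_{3}\in[M+2L+1,M+N+2L+1]$. The lateral boundary condition is automatic, so $u$ is admissible for $\phi^{1,M+N+2L+1}(\nu,t)$. The energy splits into three within-block contributions (bounded by $\phi^{1,M}(\nu)t+o(t)$, a buffer term of order $L^{3}t$, and $\phi^{1,N}(\nu)t+o(t)$, respectively) plus cross-interactions across the buffer. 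For a bottom-to-buffer pair $(x,y)$ with $|x-y|\leq L$ one writes
\begin{equation*}
|u_{M}^{*}(x)-u_{0,\nu}(P_{2}(y))| \leq |u_{M}^{*}(x)-u_{0,\nu}(P_{2}(x))| + |u_{0,\nu}(P_{2}(x))-u_{0,\nu}(P_{2}(y))|.
\end{equation*}
By the plane-like property the first summand vanishes unless $\dist(P_{2}(x),\{\nu\}^{\perp})\leq W_{0}$; by the structure of $u_{0,\nu}$ the second vanishes unless $\dist(P_{2}(x),\{\nu\}^{\perp})\leq L$. In both cases the admissible $x$'s lie in a slab of cardinality $O((W_{0}+L)t)$, each interacting with $O(L^{3})$ partners $y$, so the bottom-to-buffer cross-interactions sum to at most $C_{*}t$; buffer-to-top pairs are bounded analogously. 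Dividing the whole energy estimate by $t$ and letting $t\to\infty$ delivers the almost-subadditive inequality.

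The main obstacle is precisely the plane-like minimizer input. Without the uniform tube control it provides, the set $\{x\in\mathcal{L}^{M}_{1}:\,u_{M}^{*}(x)\neq u_{0,\nu}(P_{2}(x))\}$ at a fixed height could a priori have cardinality as large as $O(t^{2})$, in which case the cross-interactions in the gluing would scale like $t^{2}$; after dividing by $t$ the error would diverge as $t\to\infty$ and the subadditive argument would collapse. With the plane-like estimate at hand, the rest is elementary bookkeeping exploiting the finite range of $c$ and Hypothesis 1.
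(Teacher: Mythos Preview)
Your approach is the opposite direction from the paper's and, as written, has a genuine gap. The paper proves the lemma by \emph{superadditivity}: given a competitor $u$ for $m_{[0,M+M'-1]}(u_{0,\nu},Q_\nu(0,t))$, one simply restricts it to the slabs $Q_\nu(0,t)\times[0,M-1]$ and $Q_\nu(0,t)\times[M,M+M'-1]$ and drops the (nonnegative) cross-slab interactions. This yields
\[
m_{[0,M+M'-1]}(u_{0,\nu},Q_\nu(0,t))\geq m_{[0,M-1]}(u_{0,\nu},Q_\nu(0,t))+m_{[0,M'-1]}(u_{0,\nu},Q_\nu(0,t))
\]
for every $t$, hence $\phi^{1,M+M'-1}(\nu)\geq\phi^{1,M-1}(\nu)+\phi^{1,M'-1}(\nu)$. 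Fekete's lemma for superadditive sequences, together with the a~priori bound $\phi^{1,M}(\nu)\leq CM$, then gives the limit directly. No gluing, no plane-like minimizers, no error terms.

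Your subadditive route via gluing needs the transition zone of the almost-minimizer to be uniformly thin, and this is where the argument breaks. You assert that the plane-like minimizer result in the appendix yields a tube width $W_0$ \emph{independent of $M$}. It does not: Theorem~\ref{existenceplanelike} only gives $\lambda\leq CM$, and this bound is in general sharp (think of interactions favouring a tilted interface in the bulk; in a slab of thickness $M$ the projected transition region then has width of order $M$). With $W_0\sim M$ your bottom-to-buffer cross-interaction estimate becomes $O(Mt)$ rather than $O(t)$, so after dividing by $t$ you obtain
\[
\phi^{1,M+N+K}(\nu)\leq \phi^{1,M}(\nu)+\phi^{1,N}(\nu)+C'(M+N),
\]
which is too weak: setting $M=N$ and iterating only gives $\limsup_n\phi^{1,n}(\nu)/n\leq\liminf_n\phi^{1,n}(\nu)/n+C'$, not equality. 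The missing idea is precisely that restriction costs nothing when the energy is a sum of nonnegative terms, whereas extension (gluing) does; the paper exploits the free direction.
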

\begin{proof}
We define a sequence $a_k=\phi^{1,k-1}(\nu)$. It is enough to show that $a_k$ is superadditive. To reduce notation, similar to (\ref{randommin}) we introduce  
\begin{equation*}
m_{[N,M]}(u_{0,\nu},Q_{\nu}(x,\rho)):=\inf\{E^{1}_{[N,M]}(u,Q_{\nu}(x,\rho)):\;u\in \mathcal{PC}_{1,u_{0,\nu}}^{2L}(Q_{\nu}(x,\rho))\}.
\end{equation*}
Note that by periodicity $m_{[N,M]}(u_{0,\nu},Q_{\nu}(x,\rho))=m_{[N+k,M+k]}(u_{0,\nu},Q_{\nu}(x,\rho))$ for every $k\in\mathbb{N}$. For fixed $t>>1$ one can take any admissible configuration for $m_{[0,M+M^{\prime}-1]}(u_{0,\nu},Q_{\nu}(0,t))$ and restrict it to the sets $Q_{\nu}(0,t)\times [0,M-1]$ and $Q_{\nu}(0,t)\times [M,M+M^{\prime}-1]$ to obtain the inequality
\begin{align*}
\frac{1}{t}m_{[0,M+M^{\prime}-1]}(u_{0,\nu},Q_{\nu}(0,t)) &\geq \frac{1}{t}m_{[0,M-1]}(u_{0,\nu},Q_{\nu}(0,t))+\frac{1}{t}m_{[M,M+M^{\prime}-1]}(u_{0,\nu},Q_{\nu}(0,t))
\\
&=\frac{1}{t}m_{[0,M-1]}(u_{0,\nu},Q_{\nu}(0,t))+\frac{1}{t}m_{[0,M^{\prime}-1]}(u_{0,\nu},Q_{\nu}(0,t)),
\end{align*}
where we neglected the interactions between the two cubes and used periodicity in the last equality. Letting $t\to +\infty$, we obtain superadditivity of the sequence $a_k$.
\end{proof}

The next result shows the asymptotic behaviour of the surface tension when the average number of layers $pM$ diverges.
\begin{proposition}\label{layerdependence}
Let $\phi^1$ be defined as in the previous lemma. For $\nu\in S^1$ it holds that 
\begin{equation*}
\lim_{M\to +\infty}\frac{\phi^p_{{\rm hom}}(M;\nu)}{pM}=\phi^1(\nu).
\end{equation*}
\end{proposition}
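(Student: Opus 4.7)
My plan is to prove matching upper and lower bounds on $\phi^p_{{\rm hom}}(M;\nu)/(pM)$, using as key tools Lemma~\ref{auxlemma} (which gives $\phi^1(\nu)=\lim_K\phi^{1,K-1}(\nu)/K$) and Hoeffding's concentration for the column heights $N(i_1,i_2):=\sum_{k=1}^M X^p_{(i_1,i_2,k)}(\w)\sim\mathrm{Binomial}(M,p)$, which cluster around $pM$ with fluctuations of order $\sqrt{M}$.

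For the upper bound I fix $\delta>0$, choose $K$ so that $\phi^{1,K-1}(\nu)/K\le\phi^1(\nu)+\delta$, and take a near-minimizer $u^*$ for $\phi^{1,K-1}(\nu)$. After modifying $u^*$ on its top and bottom $L$ layers so that $u^*=u_{0,\nu}(P_2)$ there---an $O(Lt)$ change, since this creates at most an $L$-thick ``flat'' interface along $\{P_2\cdot\nu=0\}$---I stack $T:=\lfloor pM/K\rfloor$ vertical translates of $u^*$ and set the configuration equal to $u_{0,\nu}(P_2)$ above height $TK$ to define a test configuration $v^M$ on $\mathbb{Z}^3$. Restricting $v^M$ to $\mathcal{L}_p^M(\w)$ yields an admissible competitor in the cell formula of Remark~\ref{simplerformula}. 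Within each slab the energy is bounded by the deterministic slab energy (since $\mathcal{L}_p^M(\w)\subset\mathcal{L}_1^M$), giving a total within-slab contribution of at most $T(\phi^{1,K-1}(\nu)+O(L))\,t$; between-slab joints cost only $O(Lt)$ each since both sides take boundary values; and by the Gaussian tail of $N$, the top region above $TK$ contains on average $O(\sqrt{M})$ particles per unit area and contributes $O(\sqrt{M}\,t)$. Dividing by $pM\cdot t$ and sending $t\to\infty$, then $M\to\infty$, and finally $K\to\infty$ yields $\limsup_M\phi^p_{{\rm hom}}(M;\nu)/(pM)\le\phi^1(\nu)+\delta$.

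For the lower bound I fix $\epsilon\in(0,1)$ and set $H:=\lfloor(1-\epsilon)pM\rfloor$. Given a near-minimizer $u^\w$ for $\phi^p_{{\rm hom}}(M;\nu)$ on $Q_\nu(0,t)$, I define the extension $\hat u^\w:\mathbb{Z}^2\times\{0,\dots,H\}\to\{\pm1\}$ that coincides with $u^\w$ on particles of $\mathcal{L}_p^M(\w)$ at heights $\le H$ and equals $u_{0,\nu}(P_2)$ on the ``added'' sites where $N(i_1,i_2)<i_3\le H$. The deterministic lower bound yields $E^1_{[0,H]}(\hat u^\w,Q_\nu(0,t))/t\ge\phi^{1,H-1}(\nu)+o_t(1)$, while the discrepancy with $E^p_{1,M}(u^\w)$ consists only of bonds involving added particles. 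Using Hoeffding's bound $\mathbb{P}(N<H)\le e^{-c\epsilon^2M}$ together with the observation that such bonds contribute nothing outside an $O(L)$-neighbourhood of $\{P_2\cdot\nu=0\}$ (since both endpoints take the ground-state value elsewhere), the total extension cost is at most $O(L^4\sqrt{pM}\,e^{-c\epsilon^2M})$ per unit $t$, hence negligible after dividing by $pM$ as $M\to\infty$. This gives $\phi^p_{{\rm hom}}(M;\nu)/(pM)\ge\phi^{1,H-1}(\nu)/(pM)\,(1-o_M(1))\to(1-\epsilon)\phi^1(\nu)$ by Lemma~\ref{auxlemma}, and sending $\epsilon\to 0$ concludes.

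The main obstacle lies in the extension-cost bound in the lower bound: a naive count of added-particle bonds produces a term $\sim t\cdot pM\cdot e^{-c\epsilon^2 M}$, linear in $t$ and hence surviving the $t\to\infty$ limit that defines $\phi^p_{{\rm hom}}$. Obtaining the necessary $o(pM)$ control forces one to restrict attention to added particles in an $O(L)$-strip around $\{P_2\cdot\nu=0\}$, which in turn requires a quantitative plane-likeness statement for near-minimizers of the random problem; this is the dimension-reduction adaptation of the Caffarelli--de la Llave-type result from the paper's appendix. An alternative route avoiding plane-likeness would be to decompose the energy into $K$-thick vertical slab contributions and prove that slab $j$ contributes at least $\rho_j\,\phi^{1,K-1}(\nu)\,t$ with $\rho_j=\mathbb{P}(N\ge(j+1)K-1)$ and $K\sum_j\rho_j\to pM$; this however demands a stability statement for the deterministic surface tension under random column removal at densities above the site-percolation threshold.
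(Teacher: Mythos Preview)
Your overall plan (compare the random problem with the deterministic one at height $\approx pM$ via Hoeffding) matches the paper's strategy, but both halves of your implementation contain genuine gaps, and the paper's execution is different in an important way.

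\textbf{Upper bound.} Your stacking argument asserts that forcing $u^*=u_{0,\nu}(P_2)$ on the top and bottom $L$ layers costs only $O(Lt)$. This is not justified: the cross-layer bonds between the modified strip and the unmodified bulk contribute $c(x-y)|u_{0,\nu}(P_2(x))-u^*(y)|$, and this is nonzero wherever $u^*(y)\neq u_{0,\nu}(P_2(y))$, which a priori happens on $O(t^2)$ sites. One needs plane-likeness of $u^*$ (Theorem~\ref{existenceplanelike}) to restrict those sites to a stripe, and you do not invoke it. Worse, the appendix only gives a stripe of width $\lambda_K\le CK$, so even with plane-likeness the modification cost per slab is $O(Kt)$; with $T\approx pM/K$ slabs this yields an $O(1)$ error after dividing by $pM$, which does \emph{not} vanish as $K\to\infty$. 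The paper bypasses stacking entirely: it takes a single plane-like minimizer $u_{N,M^p_\delta}$ for the deterministic problem at height $M^p_\delta=\lceil (p+\delta)M\rceil$, extends it by $u_{0,\nu}$ above, and bounds the random energy in the top layers $(M^p_\delta-L,M]$ by counting random points in the stripe $S_\nu(N,\lambda)$, $\lambda\le CM$, together with Hoeffding.

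\textbf{Lower bound.} You correctly identify the obstruction: the naive extension cost is $\sim t^2 pM\,e^{-c\epsilon^2 M}$, which is linear in $t$ after dividing by $t$. Your proposed remedy, however, is plane-likeness for near-minimizers of the \emph{random} problem, and Theorem~\ref{existenceplanelike} provides this only for the deterministic periodic energy---your appeal to the appendix is misplaced. The paper's route is the reverse of yours: after restricting to heights $[0,M^p_{-\delta}]$ (which gives $\phi^p_{\text{hom}}(M;\nu)\ge \phi^{p,M^p_{-\delta}}(\nu)$), it bounds the defect $\phi^{1,M^p_{-\delta}}(\nu)-\phi^{p,M^p_{-\delta}}(\nu)$ by taking the deterministic plane-like minimizer $u_N$ (width $\lambda\le CM$) and comparing $E^1$ and $E^p$ of functions restricted to the stripe $S_\nu(N,\lambda)$. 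Because the deterministic minimizer's energy lives entirely on $S_\nu$, the comparison reduces to counting missing lattice sites in $S_\nu\times[1,M^p_{-\delta}]$, giving an error $\le CM\cdot\mathbb{E}[\max(M^p_{-\delta}-N,0)]$, which vanishes by Hoeffding. The key point you miss is that the localization to the stripe comes from the \emph{deterministic} plane-like minimizer, not from any structure of the random minimizer.
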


\begin{proof}
Throughout this proof we assume without loss of generality that $L\in\mathbb{N}$ and we set $\mathbb{Z}^2_M=\mathbb{Z}^2\times\{0,\dots,M\}$. Fix $\nu\in S^1$ (we will drop the dependence on $\nu$ for several quantities). We separately show two inequalities. For the moment we also fix $M$. Consider a sequence of minimizing configurations $u_N$ such that $\lim_N\frac{1}{N}E^{1}_{[0,M]}(u_N,Q_{\nu}(0,N))=\phi^{1,M}(\nu)$.
As we show now, we can assume that $u_N$ is a plane-like configuration as given by Theorem \ref{existenceplanelike}. Indeed, applying that theorem we find a plane-like ground state $u_{\nu}$ for the energy 
\begin{equation*}
E_M(u,Q_{\nu}(0,N)):=\sum_{\substack{x\in \mathbb{Z}^2_M\\ P_2(x)\in Q_{\nu}(0,N)}}\sum_{y\in\mathbb{Z}^2_M}c(x-y)|u(x)-u(y)|.
\end{equation*}
To reduce notation, we set
\begin{equation*}
S_{\nu}(N,\lambda)=\{x\in\R^2:\;x\in Q_{\nu}(0,N),\,\dist(x,\{\nu\}^\perp)\leq 4(\lambda+L)\}
\end{equation*}
so that the energy of $u_{\nu}$ is concentrated on $S_{\nu}(N,\lambda)\times[0,M]$ with $\lambda\leq CM$ (see Theorem \ref{existenceplanelike}). For any $N\in\mathbb{N}$ we define two configurations $\overline{u}_N,\tilde{u}_N:\mathbb{Z}^2_M\to\{\pm 1\}$ via 
\begin{equation*}
\begin{split}
&\overline{u}_N(x)=
\begin{cases}
u_{0,\nu}(P_2(x)) &\mbox{if $\dist(P_2(x),\mathbb{R}^2\backslash Q_{\nu}(0,N)\leq 2L$,}
\\
u_{\nu}(x) &\mbox{otherwise.}
\end{cases}
\\
&\tilde{u}_N(x)=
\begin{cases}
u_{\nu}(x) &\mbox{if $\dist(P_2(x),\mathbb{R}^2\backslash \left(Q_{\nu}(0,N)\right)\leq L$,}
\\
u_N(x) &\mbox{otherwise.}
\end{cases}
\end{split}
\end{equation*}
Then $\overline{u}_N$ is a plane-like configuration whose energy is again concentrated on $S_{\nu}(N,\lambda)\times[0,M]$. Using the boundary conditions and the finite range assumptions one can prove  that
\begin{align*}
E^1_{[0,M]}(u_N,Q_{\nu}(0,N))&\leq E^1_{[0,M]}(\overline{u}_N,Q_{\nu}(0,N))\leq E_M(u_{\nu},Q_{\nu}(0,N))+CM^2
\\
&\leq E_M(\tilde{u}_N,Q_{\nu}(0,N))+CM^2\leq E^1_{[0,M]}(u_N,Q_{\nu}(0,N))+2CM^2.
\end{align*} 
Dividing by $N$ and letting $N\to +\infty$ we see that asymptotically we can replace $u_N$ by the plane-like configuration $\overline{u}_N$. From now on we denote by $u_{N,M}$ a plane-like minimizer whose energy is concentrated on $S_{\nu}(N,\lambda)\times[0,M]$ with $\lambda\leq CM$ and such that $\phi^{1,M}(\nu)=\lim_N\frac{1}{N}E^1_{[0,M]}(u_{N,M},Q_{\nu}(0,N))$. 
We extend $u_{N,M}$ to $\mathbb{Z}^3$ setting $u_{N,M}(x)=u_{0,\nu}(P_2(x))$ for $x_3\notin\{0,\dots,M\}$. For $\delta>0$ small enough, we separate the contribution of the bottom and the first $M^p_{\delta}:=\lceil (p+\delta)M\rceil$ random layers and estimate the remaining interactions. This leads to
\begin{align*}
\frac{1}{M}\phi^p_{{\rm hom}}(M;\nu)\leq&\frac{1}{M}\liminf_{N\to +\infty}\frac{1}{N}\mathbb{E}[E_{1,M}^p(\w)(u_{N,M^p_{\delta}},Q_{\nu}(0,N))]
\\
\leq&\frac{1}{M}\liminf_{N\to +\infty}\frac{1}{N}\mathbb{E}[E^{1}_{[0,M^p_{\delta}]}(u_{N,M^p_{\delta}},Q_{\nu}(0,N))]
\\
&+\frac{C}{M}\limsup_{N\to +\infty}\frac{1}{N}\mathbb{E}\left[\#\{x\in\mathcal{L}^M_p(\w):\; x\in S_{\nu}(N,\lambda)\times (M^p_{\delta}-L,M]\}\right]
\\
\leq&\frac{1}{M}\phi^{1,M^p_{\delta}}(\nu)+C\mathbb{E}[\#\big\{x\in\mathcal{L}_{p}^{M}(\w):\;x\in\{(0,0)\}\times (M^p_{\delta}-L,M]\}
\\
\leq &\frac{1}{M}\phi^{1,M^p_{\delta}}(\nu)+C\sum_{k=M^p_{\delta}-L}^{M}(k-M^p_{\delta}+L){M\choose k} p^k(1-p)^{M-k},
\end{align*}
where in the last step we have used that the probability of having $k$ points in $\{(0,0)\}\times(M^p_{\delta}-L,M]$ is the same as having $k+M^p_{\delta}-L$ successes out of $M$ trials in a Bernoulli experiment. In order to bound the last sum, we use Hoeffding's inequality which yields, for $M$ large enough depending on $L,\delta$,
\begin{equation*}
\mathbb{P}\Big(\sum_{i=1}^MX_{(0,0,i)}^p\geq k+M^p_{\delta}-L\Big)\leq\mathbb{P}\Big(\sum_{i=1}^MX_{(0,0,i)}^p\geq k+\Big(p+\frac{\delta}{2}\Big)M\Big)\leq \exp\Big(-2M\Big(\frac{\delta}{2}+\frac{k}{M}\Big)^2\Big).
\end{equation*}
From this bound we infer the estimate
\begin{equation*}
\sum_{k=M^p_{\delta}-L}^{M}(k-M^p_{\delta}+L){M\choose k} p^k(1-p)^{M-k}\leq\sum_{k=1}^{M}k\exp\Big(-\frac{1}{2}M\delta^2\Big)\exp(-2\delta k).
\end{equation*}
Since the right hand side vanishes when $M\to +\infty$, by Lemma \ref{auxlemma} we deduce 
$\limsup_{M}\frac{1}{M}\phi^p_{{\rm hom}}(M;\nu)\leq (p+\delta)\,\phi^1(\nu)$. Since $\delta$ was arbitrary the first inequality is proven.
\\
\hspace*{0,5cm}
It remains to show the reverse inequality. Given any admissible function $v_N:\mathcal{L}^M_p(\w)\to\{\pm 1\}$ we can neglect the interactions coming from $Q_{\nu}(0,N)\times [M^p_{-\delta}+1,M]$ which yields the estimate
\begin{equation*}
E_{1,M}^p(\w)(v_N,Q_{\nu}(0,N))\geq E^p_{[0,M^p_{-\delta}]}(\w)(v_N,Q_{\nu}(0,N)).
\end{equation*}
Minimizing on both sides and dividing by $N$, we obtain in the limit that
\begin{equation}\label{almostdone}
\frac{1}{M}\phi^p_{{\rm hom}}(M;\nu)\geq \frac{1}{M}\phi^{p,M^p_{-\delta}}(\nu).
\end{equation}
Now the idea is to estimate the error when we replace $\phi^{p,M^p_{-\delta}}(\nu)$ by $\phi^{1,M^p_{-\delta}}(\nu)$. Let $u_N$ be a sequence of plane-like configurations as in the first part of the proof. We also consider an optimal sequence $u_N^{p,\delta}=u_N^{p,\delta}(\w)$ such that 
\begin{equation*}
\phi^{p,M^p_{-\delta}}(\nu)=\lim_{N\to +\infty}\frac{1}{N}\mathbb{E}[E^p_{[0,M^p_{-\delta}]}(\w)(u_N^{p,\delta},Q_{\nu}(0,N))].
\end{equation*}
Since the deterministic surface tension dominates the random one, we have
\begin{align*}
0&\leq \phi^{1,M^p_{-\delta}}(\nu)-\phi^{p,M^p_{-\delta}}(\nu)=\lim_{N}\frac{1}{N}\mathbb{E}\left[E_{[0,M^p_{-\delta}]}^1(u_N,S_{\nu}(N,\lambda))-E^p_{[0,M^p_{-\delta}]}(\w)(u_N^{p,\delta}(\w),Q_{\nu}(0,N))\right]
\\
&\leq \limsup_{N}\frac{1}{N}\mathbb{E}\left[E_{[0,M^p_{-\delta}]}^1(u^{p,\delta}_N,S_{\nu}(N,\lambda))-E^p_{[0,M^p_{-\delta}]}(\w)(u_N^{p,\delta}(\w),S_{\nu}(N,\lambda))\right]
\\
&\leq C \limsup_{N}\frac{1}{N}\mathbb{E}[\#\{x\in \left(S_{\nu}(\lambda,N)\times[1,M^p_{-\delta}]\right)\cap\mathbb{Z}^3:\;x\notin\mathcal{L}^M_p(\w)\}]
\\
&\leq CM\mathbb{E}\Big[\max\{M^p_{-\delta}-\sum_{i=1}^{M}X_{(0,0,i)}^p,0\}\Big]\leq CM\sum_{k=1}^{M^p_{-\delta}}k\,\mathbb{P}\Big(M^p_{-\delta}-\sum_{i=1}^{M}X_{(0,0,i)}^p\geq k\Big).
\end{align*}
Here we used that the number of missing interactions can be estimated by the number of missing lattice points since each point can only interact with finitely many others. Now we apply again Hoeffding's inequality which yields
\begin{equation*}
\mathbb{P}\Big(M^p_{-\delta}-\sum_{i=1}^{M}X_{(0,0,i)}^p\geq k\Big)\leq \mathbb{P}\Big(M\Big(p-\frac{\delta}{2}\Big)-k\geq\sum_{i=1}^{M}X_{(0,0,i)}^p\Big)\leq\exp\Big(-2M\Big(\frac{\delta}{2}+\frac{k}{M}\Big)^2\Big).
\end{equation*}
We conclude the bound
\begin{equation*}
\sum_{k=1}^{M^p_{-\delta}}k\mathbb{P}\Big(M^p_{-\delta}-\sum_{i=1}^{M}X_{(0,0,i)}^p\geq k\Big)\leq \sum_{k=1}^{M^p_{-\delta}}k\exp\Big(-\frac{1}{2}M\delta^2\Big)\exp(-2\delta k).
\end{equation*}
Again the right-hand side vanishes when $M\to +\infty$ and thus $\lim_M\frac{1}{M}|\phi^{1,M^p_{-\delta}}(\nu)-\phi^{p,M^p_{-\delta}}(\nu)|=0$, so that Lemma \ref{auxlemma} and (\ref{almostdone}) imply the estimate
\begin{equation*}
\liminf_{M\to +\infty}\frac{1}{M}\phi^p_{{\rm hom}}(M;\nu)\geq \lim_{M\to +\infty}\frac{1}{M}\phi^{1,M^p_{-\delta}}(\nu)=(p-\delta)\phi^1(\nu).
\end{equation*}
Again the desired estimate follows by the arbitrariness of $\delta>0$.
\end{proof}
\begin{remark}\label{perc}
If we had not included the initial layer $\mathcal{L}^0$, then Proposition \ref{layerdependence} would still hold. However then the surface tension may not be related to an appropriate $\Gamma$-limit since the compactness of sequences with bounded energy becomes a nontrivial issue. We refer to \cite{Dilute} for a possible approach to this problem in the case of nearest-neighbour interactions and bond-percolation models.
\end{remark}
\subsection*{A percolation-type phenomenon} 
We close this final section with a result on the growth of the averaged surface tension when the number of layers increases. We let $\mathcal{L}^M_p(\w)$ be defined as in (\ref{randomdeposition}) but restrict the analysis to nearest-neighbour interactions and make them non-periodic in the sense that their magnitude is very small when one of the particles belongs to the initial layer $\mathcal{L}^0$. More precisely, given $0<\eta<<1$ we consider functions of the form
\begin{equation*}
c_{\eta}(x-y)=
\begin{cases}
0 &\mbox{if $|x-y|>1$,}
\\
\eta &\mbox{if $|x-y|=1$ and $x_3\cdot y_3=0$,}
\\
c(x-y) &\mbox{otherwise,}
\end{cases}
\end{equation*} 
where $x\mapsto c(x)$ is strictly positive on the unit circle. Then the coefficients satisfy Hypothesis 2 and fulfill (a slightly weaker version of) Hypothesis 1. We define $E^{p,\eta}_{\e,M}$ as in (\ref{depositionenergy}) with $c$ replaced by $c_{\eta}$. According to Theorem \ref{mainthm2}, again there exists the limit
\begin{equation*}
\phi^{p,\eta}_{{\rm hom}}(M;\nu):=\lim_{t\to +\infty}\frac{1}{t}\inf\{E^{p,\eta}_{1,M}(\w)(v,Q_{\nu}(0,t)):\;v(x)=u_{0,\nu}(P_2(x))\text{ if }\dist(P_2(x),\partial Q_{\nu}(0,t))\leq 2\}.
\end{equation*}
In contrast to Proposition \ref{layerdependence}, for this model we also consider the case of small $M$. We will show that if $p<1-p_{{\rm site}}$, where $p_{{\rm site}}$ is the critical site percolation probability on $\mathbb{Z}^2$, then it holds that
\begin{equation*}
\phi_{{\rm hom}}^{p,\eta}(1;\nu)\leq C_p\,\eta,
\end{equation*}
where $C_p$ may blow up only for $p\to 1-p_{{\rm site}}$. Note that we do not claim here that $p_{{\rm site}}$ is the optimal bound. We can actually improve the result in the sense that for all $M\in\mathbb{N}$ such that $(1-p)^M>p_{{\rm site}}$, then we have
\begin{equation*}
\phi_{{\rm hom}}^{p,\eta}(M;\nu)\leq C_p\,\eta.
\end{equation*}
This shows that when the probability is very small but finite, the surface tension can be arbitrary small depending on the strength of the interaction in the substrate layer, on the other hand we will establish an analogue of Proposition \ref{layerdependence} asserting that if the average number of layers increases further, even the normalized surface tension approaches a value independent of $\eta$. This result can be interpreted as the equivalent to the percolation phenomenon described in the introduction of the paper for the model without initial layer ($\eta=0$). Before proving this result, we introduce the typical energy of one slice. Given $q\in(0,1]$ and $u:\mathbb{Z}^2\to\{\pm 1\}$ we set
\begin{equation*}
E^q_{sl}(\w)(u,A):=\sum_{\substack{x,y\in\mathcal{L}^1_q(\w)\backslash\mathcal{L}^0\\ P_2(x),P_2(y)\in A}}c(x-y)|u(x)-u(y)|
\end{equation*}
and omit the dependence on $\w$ if $q=1$. We further introduce the corresponding surface tension
\begin{equation*}
\phi_{sl}^q(\nu)=\lim_{t\to +\infty}\frac{1}{t}\inf\{E^q_{sl}(\w)(u,Q_{\nu}(0,t)):\;v(x)=u_{0,\nu}(x) \text{ if }\dist(x,\partial Q_{\nu}(0,t))\leq 2\}.
\end{equation*}
Note that the existence of this deterministic limit follows again from the subadditive ergodic theorem as in the proof of Theorem \ref{mainthm2}, since we used the coercivity only for passing from finite range to decaying interactions in Step 4. In general the random variables $\w\mapsto E^q_{sl}(\w)(u,A)$ are not defined on the same probability space but we will use them only for slices of the large set $\mathcal{L}^M_p(\w)$.
\begin{theorem}\label{almostpercolation}
Let $p\in (0,1)$ and $M\in\mathbb{N}$ be such that $(1-p)^M>p_{{\rm site}}$. There exists a constant $C_{p,M}$ locally bounded for $(1-p)^M\in(p_{{\rm site}},1)$ such that \begin{equation*}
\phi_{{\rm hom}}^{p,\eta}(M;\nu)\leq C_{p,M}\eta.
\end{equation*}
On the other hand, for any $p\in (0,1)$ it holds that
\begin{equation*}
\lim_{M\to +\infty}\frac{1}{M}\phi_{{\rm hom}}^{p,\eta}(M;\nu)=2p\Big(\big(c(e_1)+c(-e_1)\big)|\nu_1|+\big(c(e_2)+c(-e_2)\big)|\nu_2|\Big)
\end{equation*}
\end{theorem}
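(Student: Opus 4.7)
The proof naturally splits into the two displayed identities, which I would handle by rather different methods: a percolation construction for the first, and an adaptation of Proposition \ref{layerdependence} combined with an explicit computation for the second.

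\textbf{Percolation upper bound.} Start from the observation that a column $(i_1,i_2)\in\mathbb{Z}^2$ carries no deposited particle strictly above the substrate iff $\sum_{k=1}^MX^p_{(i_1,i_2,k)}(\w)=0$, an event of probability $q:=(1-p)^M$ that is independent across columns. Thus the set of such \emph{empty columns} defines i.i.d.\ Bernoulli site percolation on $\mathbb{Z}^2$ at parameter $q>p_{\text{site}}$, i.e.\ supercritical. By classical 2D supercritical percolation results (Grimmett--Marstrand block renormalisation), for each rational $\nu\in S^1$ there exists a constant $K_{p,M}$, locally bounded for $q\in(p_{\text{site}},1)$, such that $\mathbb{P}$-almost surely one finds, for arbitrarily large $t$, a path $\gamma_t$ of empty columns crossing $Q_\nu(0,t)$ from one $\nu^\perp$-face to the other inside the thin slab $\{|\langle x,\nu\rangle|\leq K_{p,M}\}$, of length $|\gamma_t|\leq C_{p,M}\,t$. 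One then defines the column-constant test configuration $v_t(x)=\pm 1$ according to the side of $\gamma_t$ on which $P_2(x)$ lies, and modifies $v_t$ in a boundary layer of width $O(K_{p,M})$ to match the datum $u_{0,\nu}\circ P_2$ at distance $\leq 2$ from $\partial Q_\nu(0,t)$, at an $O(1)$ energy cost.

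\textbf{Energy estimate.} Since $v_t$ depends only on $P_2(x)$, vertical $\pm e_3$-bonds contribute nothing. A horizontal nearest-neighbour pair at height $k$ contributes only when $v_t$ differs across the two columns, which forces the underlying $\mathbb{Z}^2$-edge to be incident to $\gamma_t$, hence to an empty column. For such an edge no particles exist at heights $k\geq 1$ in at least one of the two columns, so the only contribution comes from $k=0$, with coefficient $\eta$. Consequently
\[
E^{p,\eta}_{1,M}(\w)(v_t,Q_\nu(0,t))\leq C'_{p,M}\,\eta\, t+O(1);
\]
dividing by $t$ and using Remark \ref{simplerformula} gives $\phi^{p,\eta}_{\text{hom}}(M;\nu)\leq C_{p,M}\,\eta$, with irrational $\nu$ recovered by the approximation argument of Step 4 in the proof of Theorem \ref{mainthm2}.

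\textbf{Asymptotic formula.} For the second identity I would run the argument of Proposition \ref{layerdependence} verbatim with $c$ replaced by $c_\eta$. The only change appears in the auxiliary deterministic surface tension $\phi^{1,\eta,M}(\nu)$, where the base layer now has coefficient $\eta$; since this perturbs the energy of a flat interface by at most $O(t)$ uniformly in $M$, the normalised limit $\phi^{1,\eta}(\nu):=\lim_M M^{-1}\phi^{1,\eta,M}(\nu)$ exists and coincides with $\phi^1(\nu)$. Proposition \ref{layerdependence} then yields $\lim_M M^{-1}\phi^{p,\eta}_{\text{hom}}(M;\nu)=p\,\phi^1(\nu)$. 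To compute $\phi^1(\nu)$ in the present nearest-neighbour setting, note that for $p=1$ the energy $E^1_{[0,M]}$ decouples across horizontal slices $\{x_3=k\}$: $\pm e_3$-bonds are parallel to any vertical interface $S_{u_{0,\nu}}\times\R$ and so contribute nothing on the flat configuration, which is a minimizer by the plane-like minimizer result of the appendix applied slice by slice. Each slice contributes the 2D nearest-neighbour Ising surface tension
\[
2\big((c(e_1)+c(-e_1))|\nu_1|+(c(e_2)+c(-e_2))|\nu_2|\big),
\]
so $\phi^{1,M}(\nu)$ equals $M$ times this quantity, giving $\phi^1(\nu)=2\big((c(e_1)+c(-e_1))|\nu_1|+(c(e_2)+c(-e_2))|\nu_2|\big)$ and hence the claimed formula after multiplication by $p$.

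\textbf{Main obstacle.} The delicate step is the percolation crossing lemma in Part 1: one needs supercritical site-percolation crossings that are plane-like in \emph{arbitrary} direction $\nu\in S^1$, with quantitative control on both length and transverse deviation $K_{p,M}$. Rational $\nu$ can be treated by a block-renormalisation whose block size diverges as $q\downarrow p_{\text{site}}$ — which is exactly why the constant $C_{p,M}$ is only locally bounded on $(p_{\text{site}},1)$ and reflects the blow-up of the percolation correlation length at criticality — and irrational $\nu$ follow by approximation.
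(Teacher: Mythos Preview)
Your outline is correct in both parts, but it diverges from the paper's proof in two interesting ways.

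For the percolation bound, the paper avoids the crossing-in-arbitrary-direction input entirely. It establishes the estimate only for $\nu=\pm e_1,\pm e_2$, using Kesten's quantitative crossing theorem for \emph{axis-aligned} rectangles of size roughly $N\times\sqrt{N}$ (so the path may wander by $\sqrt{N}$, and the boundary mismatch costs $O(M\sqrt{N})=o(N)$ rather than your $O(1)$). General $\nu$ is then obtained in one line from the convexity of the one-homogeneous extension of $\phi^{p,\eta}_{\text{hom}}(M;\cdot)$: since it is bounded by $C\eta$ at the four coordinate directions, it is bounded by $\sqrt{2}\,C\eta$ everywhere. This sidesteps precisely the ``main obstacle'' you flag; your route works but is heavier, and the reference to Grimmett--Marstrand is a bit off (that result concerns $d\geq 3$ slabs --- in $2$D one appeals to crossing estimates for long rectangles and chemical-distance bounds for the supercritical cluster).

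For the asymptotic formula, the paper does \emph{not} reduce to Proposition~\ref{layerdependence}. It argues directly via slice energies $E^q_{sl}$: the upper bound tests a plane-like, column-constant configuration and controls the expected contribution of layer $k$ by the probability $p_k$ that a fixed column has height $\geq k$; the lower bound drops vertical bonds and the substrate, reducing to a minimum over each slice, and then shows $q\mapsto\phi^q_{sl}(\nu)$ is continuous at $q=1$. Your shortcut via Proposition~\ref{layerdependence} is legitimate, but note that $c_\eta$ is not of the pure form $c(x-y)$ (it depends on $x_3,y_3$ separately), so the argument is not literally ``verbatim'': you need that Theorem~\ref{existenceplanelike} and Lemma~\ref{auxlemma} use only $\mathbb{Z}^2$-translation invariance, and you must replace the superadditivity proof of $\lim_M M^{-1}\phi^{1,\eta,M}(\nu)$ by the direct computation you indicate (the base layer perturbs the column-constant optimum by $O(1)$ uniformly in $M$). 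With those small adjustments your route gives the same conclusion as the paper's slice analysis. Your identification of $\phi^1(\nu)$ with the $2$D nearest-neighbour surface tension is exactly what the paper does at the end (one minor slip: $\phi^{1,M}(\nu)$ is $(M{+}1)$ times the slice tension, not $M$ times, but of course this is immaterial after dividing by $M$).
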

\begin{proof}
In order to prove the first statement, we start with the case $\nu=e_2$ and use results from percolation theory which show that the contribution from the random layers is negligible: For $q:=(1-p)^M>p_{{\rm site}}$, we consider the so-called Bernoulli site percolation on $\mathbb{Z}^2$, that is we assign independently a weight $X_i(\w)\in\{\pm 1\}$ to all the vertices $i\in\mathbb{Z}^2$ such that $\mathbb{P}(X_i=1)=q$. We say that $i_0,\dots,i_k$ is an occupied path if $|i_{n}-i_{n+1}|=1$ and $X_{i_n}(\w)=1$ for all $n=0,\dots,k$. Theorem 11.1 in \cite{Kestbook} yields that there exist universal constants $c_j,d_j$ such that
\begin{align*}
\mathbb{P}\Big(&\text{at least }c_1(q-p_{{\rm site}})^{d_1}n\text{ disjoint occupied paths from }\{0\}\times [0,n]\text{ to }\{m\}\times[0,n]
\\
&\text{ and contained in }[0,m]\times [0,n]\text{ exist}\Big)\geq 1-c_2(m+1)\exp(-c_3(q-p_{{\rm site}})^{d_2}n).
\end{align*}
Given $N\in \mathbb{N}$, we first combine this estimate with the Borel-Cantelli lemma  and, using stationarity, we obtain that for almost every $\w\in\Omega$ there exists $N_0=N_0(\w)$ such that for all $N\geq N_0$ we find at least $c_1(q-p_{{\rm site}})^{d_1}2\sqrt{N}$ disjoint occupied paths connecting the vertical boundary segments of the rectangle $R_N:=[-\lfloor\frac{N}{2}\rfloor+2,\lfloor\frac{N}{2}\rfloor-2]\times [-\lceil \sqrt{N} \rceil,\lceil \sqrt{N}\rceil ]$. As the paths are disjoint and are contained in $R_N$, at least one of them uses at most $\frac{2}{c_1}(q-p_{{\rm site}})^{-d_1}N$ vertices. Now we come back to the actual proof. By definition of the random lattice in (\ref{randomdeposition}), using the above considerations in the layer $\mathbb{Z}^2\times\{1\}$, for $N\geq N_0$ we can find a path connecting the vertical boundary segments of the rectangle $R_N\times\{1\}$, contained in $R_N\times\{1\}$, using at most $c_{p,M}N$ vertices and none of them belongs to $\mathcal{L}^M_p(\w)$. This path separates $R_N\times\{1\}$ into two subregions $R_N^{-}\times\{1\}$ and $R_{N}^{+}\times\{1\}$. For $N\geq N_0$ we define a (random) configuration $u_{N}:\mathcal{L}^M_p(\w)\to\{\pm 1\}$ as
\begin{equation*}
u_N(x)=
\begin{cases}
u_{0,e_1}(P_2(x)) &\mbox{if $P_2(x)\notin R_N$,}\\
+1 &\mbox{if $P_2(x)\in R_N^{+}$,}\\
-1 &\mbox{otherwise.}
\end{cases}
\end{equation*}
Up to possibly exchanging the roles of $R_N^{\pm}$ we can assume that $u_N\in\mathcal{PC}_{1,u_{0,e_2}}^2(\w,Q_{e_2}(0,N))$. Hence by definition of $\phi_{{\rm hom}}^{p,\eta}(e_2)$ and the fact that $u_{N}$ depends not on the $z$-direction, it holds that
\begin{align}\label{separation}
\phi_{{\rm hom}}^{p,\eta}(e_2)\leq& \liminf_{N\to +\infty}\frac{1}{N}E_{1,M}^{p,\eta}(\w)(u_N,Q_{e_2}(0,N))\leq\limsup_{N\to +\infty}\frac{1}{N}\sum_{\substack{x,y\in Q_{e_2}(0,N)\cap\mathbb{Z}^2\\ |x-y|=1}}\eta |u_N(x)-u_N(y)|\nonumber
\\
&+\limsup_{N\to +\infty}\frac{1}{N}\sum_{k=1}^M\sum_{\substack{x,y\in\mathcal{L}^M_p(\w)\\ x,y\in Q_{e_2}(0,N)\times \{k\}}}c(x-y)|u_N(x)-u_N(y)|.
\end{align}
We now estimate each of the two terms on the right-hand side. Concerning the second one, we observe that if $x,y\in (Q_{e_2}(0,N)\times\{k\})\cap\mathcal{L}^M_p(\w)$ are such that $|x-y|=1$ and $u_N(x)\neq u_N(y)$, then either $P_2(x),P_2(y)\in \pm\frac{N}{2}e_1+\big([-4,4]\times [-2\sqrt{N},2\sqrt{N}]\big)$ or, without loss of generality, $P_2(x)\in R_N^{-}$ and $P_2(y)\in R_N^+$. In the second case, we note that either $(P_2(x),1)$ or $(P_2(y),1)$ has to be a vertex of the path constructed above, hence either $x\notin\mathcal{L}^M_p(\w)$ or $y\notin \mathcal{L}^M_p(\w)$. We then rule out the existence of such interactions and we may bound the second term via
\begin{equation}\label{sqrtNbound}
\limsup_{N\to +\infty}\frac{1}{N}\sum_{k=1}^M\sum_{\substack{x,y\in\mathcal{L}^M_p(\w)\\ x,y\in Q_{e_2}(0,N)\times \{k\}}}c(x-y)|u_N(x)-u_N(y)|\leq \limsup_{N\to +\infty}\frac{CM}{\sqrt{N}}=0.
\end{equation} 
Applying the same arguments for the first term, we may use the fact that the separating path uses at most $c_{p,M}N$ vertices and we deduce that
\begin{equation*}
\limsup_{N\to +\infty}\frac{1}{N}\sum_{\substack{x,y\in Q_{e_2}(0,N)\cap\mathbb{Z}^2\\ |x-y|=1}}\eta |u_N(x)-u_N(y)|\leq 4c_{p,M}\eta.
\end{equation*} 
From this estimate, the first claim in the case $\nu=e_2$ follows by (\ref{separation}) and (\ref{sqrtNbound}). The above argument can be adapted to the cases $\nu=-e_2$ and $\nu=\pm e_1$. By $L^1$-lower semicontinuity, the one-homogeneous extension of $\phi_{{\rm hom}}^{p,\eta}$ must be convex (see \cite{AmBrII}). For general $\nu\in S^1$ the claim then follows upon multiplying the constant by a factor $\sqrt{2}$.
\\
\hspace*{0,5cm}
In order to prove the second claim, we need to show two inequalities. Given a sequence of admissible configurations $u_N$ such that $\lim_N\frac{1}{N}E_{sl}^1(u_N,Q_{\nu}(0,N))=\phi^1_{{\rm sl}}(\nu)$, we define an admissible configuration $\overline{u}_N:\mathcal{L}^M_p(\w)\to\{\pm 1\}$ via
\begin{equation*}
\overline{u}_N(x)=u_N(P_2(x)).
\end{equation*}
Arguing as in the proof of Proposition \ref{layerdependence}, we may assume that $u_N$ is a plane-like configuration and its energy is concentrated in a stripe 
\begin{equation*}
S_{\nu}(N,\lambda)=\{x\in\R^2:\;x\in Q_{\nu}(0,N),\,\dist(x,\{\nu\}^\perp)\leq 4(\lambda+1)\},
\end{equation*}
where now $\lambda$ is independent of $N,M$. By definition and the fact that $\overline{u}_N$ gives no interaction in the $z$-direction, we obtain that for any $\delta>0$ small enough
\begin{align*}
\frac{\phi^{p,\eta}_{{\rm hom}}(M;\nu)}{M}&\leq \frac{1}{M}\liminf_{N\to +\infty}\frac{1}{N}\mathbb{E}[E_{1,M}^{p,\eta}(\w)(\overline{u}_N,Q_{\nu}(0,N))]
\\
&\leq\left(\liminf_{N\to +\infty}\frac{1}{M}\sum_{k=1}^{M}\frac{1}{N}\mathbb{E}[E^{p_k}_{sl}(\w)(u_N,Q_{\nu}(0,N))]\right)+\frac{C}{M}\limsup_{N\to +\infty}\frac{1}{N}\#\{z\in\mathbb{Z}^2\cap S_{\nu}(N,\lambda)\}
\\
&\leq\liminf_{N\to +\infty}\frac{1}{N}\left((p+\delta)E_{sl}^1(u_N,Q_{\nu}(0,N))+\frac{1}{M}\sum_{k>\lfloor (p+\delta)M\rfloor}^M\mathbb{E}[E_{sl}^{p_k}(\w)(u_N,Q_{\nu}(0,N))]\right)+\frac{C\lambda}{M}
\\
&=(p+\delta)\phi^1_{{\rm sl}}(\nu)+\sup_{k>\lfloor (p+\delta)M\rfloor}\liminf_{N\to +\infty}\frac{1}{N}\mathbb{E}[E_{{\rm sl}}^{p_k}(\w)(u_N,Q_{\nu}(0,N))]+\frac{C\lambda}{M},
\end{align*}
where $p_k=\sum_{l=k}^{M}{M\choose l} p^l(1-p)^{M-l}$ is the probability of having at least $k$ successes out of $M$ trials in a Bernoulli experiment. Note that here the new random variables are indeed defined on the same probability space and are coupled to the variables generating the stochastic lattice $\mathcal{L}^M_p(\w)$. As $\lambda$ is independent of $M$, the third term vanishes when $M\to +\infty$, so that we are left to show that also the second one converges to zero. In order to estimate the second term we use the fact that $u_N$ is a plane-like configuration, so that 
\begin{equation*}
\frac{1}{N}\mathbb{E}[E_{{\rm sl}}^{p_k}(\w)(u_N,Q_{\nu}(0,N))]=\frac{1}{N}\mathbb{E}[E_{{\rm sl}}^{p_k}(\w)(u_N,S_{\nu}(N,\lambda))]\leq p_kC\lambda.
\end{equation*} 
For any $k>\lfloor (p+\delta)M\rfloor$, by the law of large numbers it holds that $p_k\to 0$ when $M\to +\infty$. Hence we deduce $\limsup_{M}\frac{1}{M}\phi^{p,\eta}_{{\rm hom}}(M;\nu)\leq (p+\delta)\,\phi^1_{{\rm sl}}(\nu)$. As $\delta>0$ was arbitrary, we finally obtain \begin{equation*}
\limsup_{M}\frac{1}{M}\phi^{p,\eta}_{{\rm hom}}(M;\nu)\leq p\,\phi^1_{{\rm sl}}(\nu).
\end{equation*}
\hspace*{0,5cm}
We next show the reverse inequality. Given any admissible function $\overline{u}_N:\mathcal{L}^M_p(\w)\to\{\pm 1\}$ we can neglect the interactions in the $z$-direction and the lowest layer $\mathcal{L}^0$ and obtain the estimate
\begin{equation*}
E_{1,M}^{p,\eta}(\w)(\overline{u}_N,Q_{\nu}(0,N))\geq\sum_{k=1}^ME^{p_k}_{{\rm sl}}(\w)(\overline{u}_N(\cdot,k),Q_{\nu}(0,N)) \geq \sum_{k=1}^{\lceil (p-\delta)M\rceil}E_{{\rm sl}}^{p_k}(\w)(\overline{u}_N(\cdot,k),Q_{\nu}(0,N)).
\end{equation*}
Since $\overline{u}_N(\cdot,k)$ fulfills the correct boundary condition in every layer, we deduce that
\begin{equation*}
\frac{1}{M}\phi^{p,\eta}_{{\rm hom}}(M;\nu)\geq (p-\delta)\inf_{k\leq \lceil (p-\delta)M\rceil}\phi^{p_k}_{{\rm sl}}(\nu).
\end{equation*}
Again by the law of large numbers for an independent Bernoulli experiment it remains to show that the function $q\mapsto \phi_{sl}^q(\nu)$ is continuous in $q=1$, that means we can pass from a random to a deterministic lattice. This will be the last step.
\\
\hspace*{0,5cm}
In order to prove continuity let $u_N$ be a plane-like sequence of configurations as in the first part of the proof and consider an optimal sequence $u_N^q(\w)$ such that 
\begin{equation*}
\phi^q_{{\rm sl}}(\nu)=\lim_{N\to +\infty}\frac{1}{N}\mathbb{E}[E^q_{{\rm sl}}(\w)(u_N^q(\w),Q_{\nu}(0,N))].
\end{equation*}
Similar to the proof of Proposition \ref{layerdependence} we obtain
\begin{align*}
0&\leq \phi^1_{{\rm sl}}(\nu)-\phi^q_{{\rm sl}}(\nu)=\lim_{N}\frac{1}{N}\mathbb{E}[E_{{\rm sl}}^1(u_N,S_{\nu}(\lambda,N))-E^q_{{\rm sl}}(\w)(u_N^q(\w),Q_{\nu}(0,N))]
\\
&\leq\limsup_{N}\frac{1}{N}\mathbb{E}[E_{{\rm sl}}^1(u^q_N(\w),S_{\nu}(\lambda,N))-E^q_{sl}(\w)(v_N^q(\w),S_{\nu}(\lambda,N))]
\\
&\leq C \lim_{N}\frac{1}{N}\mathbb{E}[\#\{z\in (S_{\nu}(\lambda,N)\cap\mathbb{Z}^2)\times\{1\}:\;z\notin\mathcal{L}^1_q(\cdot)\}]
=C(1-q)\lambda.
\end{align*}
The estimate above clearly implies convergence of the surface tensions when $q\to 1$ which shows that $\limsup_{M}\frac{1}{M}\phi^{p,\eta}_{{\rm hom}}(M;\nu)\geq p\,\phi^1_{{\rm sl}}(\nu)$.
\\
\hspace*{0,5cm}
It remains to identify $\phi^1_{{\rm sl}}(\nu)$. We just sketch the argument. Any admissible configuration asymptotically has an interface containing at least $|\nu_1|$ interactions along the two directions $\pm e_1$ and $|\nu_2|$ interactions along the directions $\pm e_2$. Since any pair of interacting points is counted twice with reversing direction and $|u(x)-u(y)|\in\{0,2\}$ we find that $\phi^1_{{\rm sl}}(\nu)\geq 2(c(e_1)+c(-e_1))|\nu_1|+2(c(e_2)+c(-e_2))|\nu_2|$. On the other hand a suitable discretization of a plane attains this value, hence
\begin{equation*}
\phi^1_{{\rm sl}}(\nu)=2(c(e_1)+c(-e_1))|\nu_1|+2(c(e_2)+c(-e_2))|\nu_2|,
\end{equation*}
and the proof is finished.
\end{proof}

\appendix

\section{Plane-like minimizers for one-periodic dimension reduction problems}
In this first part of the appendix we prove that the results about plane-like minimizers for periodic interactions in \cite{CadlF} can be extended to dimension-reduction problems. We restrict the analysis to one-periodic interactions, which is the case when the coefficients depend only on the difference as in Hypothesis 2. Moreover, we focus on the physical case of reduction $3$-d to $2$-d. To fix notation, for any set $\Gamma\subset\mathbb{Z}^2$, we write $\Gamma_M=\Gamma\times (\mathbb{Z}\cap [0,M])$. In contrast to the main part of this paper, here we consider an interaction energy that takes into account also interactions outside the domain. To be more precise, given $u:\mathbb{Z}^2_M\to\{\pm 1\}$ we investigate finite-range energies of the form 
\begin{equation*}
E_M(u,\Gamma)=\sum_{x\in\Gamma_M}\sum_{y\in\mathbb{Z}^2_M}c(x-y)|u(x)-u(y)|,
\end{equation*} 
where the coefficients fulfill the following assumptions:
\begin{itemize}
	\item[(i)] $0\leq c(z)\leq C$ for all $z\in\R^3$ and $\min_ic(\pm e_i)\geq c_0>0$,
	\item[(ii)] there exists $L>0$ such that $c(z)=0$ for all $|z|\geq L$.
\end{itemize}
Before stating and proving the main theorem we need some definitions. 
\begin{definition}
We say that $u:\mathbb{Z}^2_M\to\{\pm 1\}$ is a ground state for the energy $E_M$ whenever $E_M(u,\Gamma)\leq E_M(u,\Gamma)$ for all finite sets $\Gamma\subset\mathbb{Z}^2$ and all $v:\mathbb{Z}^2_M\to\{\pm1 \}$ such that $u=v$ on $\{z\in \mathbb{Z}^2_M:\;\exists z^{\prime}\in(\mathbb{Z}^2\backslash\Gamma)_M\text{ with }|z-z^{\prime}|\leq L\}$.	
\end{definition}
\begin{remark}\label{monotonicity}
When $u$ and $\Gamma$ are such that $E_M(u,\Gamma)\leq E_{M}(v,\Gamma)$ for all $v$ such that $u=v$ on $\{z\in \mathbb{Z}^2_M:\;\exists z^{\prime}\in(\mathbb{Z}^2\backslash\Gamma)_M\text{ with }|z-z^{\prime}|\leq L\}$, then the same conclusion holds for every subset $\Gamma^{\prime}\subset\Gamma$. Indeed, take any $v$ such that $u=v$ on $\{z\in \mathbb{Z}^2_M:\;\exists z^{\prime}\in(\mathbb{Z}^2\backslash\Gamma^{\prime})_M\text{ with }|z-z^{\prime}|\leq L\}$. Then for any two points $x,y$ with $x\in (\Gamma\backslash\Gamma^{\prime})_M$ and $y\in\mathbb{Z}^2_M$ with $|x-y|\leq L$, it holds that $u(x)=v(x)$ and $u(y)=v(y)$. Hence it follows that
\begin{equation*}
E_M(u,\Gamma^{\prime})-E_M(v,\Gamma^{\prime})=E_M(u,\Gamma)-E_M(v,\Gamma)\leq 0.
\end{equation*}
\end{remark}
Using the same notation as for the stochastic group action, for $k\in\mathbb{Z}^2$ we denote by $\tau_k$ the shift operator acting on sets $\Gamma$ and configurations $u:\mathbb{Z}^2_M\to\{\pm 1\}$ via
\begin{equation*}
\tau_k\Gamma=\Gamma+k,\quad\quad \tau_ku(x)=u(x-(k,0)).
\end{equation*}
Then the following formula holds true: 
\begin{equation}\label{shiftenergy}
E_M(\tau_ku,\tau_k\Gamma)=E_M(u,\Gamma).
\end{equation}
The remaining part of this appendix will be devoted to the proof of the next theorem.
\begin{theorem}\label{existenceplanelike}
There exists $\lambda>0$ such that for all $\nu\in S^1$ there exists a ground state $u_{\nu}$ of $E_M$ such that $u(x)\neq u(y)$ implies $\dist(x,\{\nu\}^\perp)\leq \lambda$. Such a ground state is called plane-like. Moreover we can choose $\lambda\leq CM$ for some constant $C$ independent of $\nu,M$.	
\end{theorem}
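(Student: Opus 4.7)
The plan is to adapt the Caffarelli--de la Llave construction of plane-like ground states \cite{CadlF} to the slab $\mathbb{Z}^2_M$. The cornerstone is the submodularity of the ferromagnetic energy: the pointwise inequality $|a\vee a'-b\vee b'|+|a\wedge a'-b\wedge b'|\le |a-b|+|a'-b'|$ for $a,a',b,b'\in\{\pm 1\}$, combined with the nonnegativity of $c$, yields
$$
E_M(u\vee v,\Gamma)+E_M(u\wedge v,\Gamma)\le E_M(u,\Gamma)+E_M(v,\Gamma)
$$
for every finite $\Gamma\subset\mathbb{Z}^2$ and every $u,v\colon\mathbb{Z}^2_M\to\{\pm 1\}$. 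Together with Remark~\ref{monotonicity}, this implies that whenever a non-empty finite family of ground states with prescribed boundary data exists, it is a finite lattice under $\vee,\wedge$, and hence admits a minimal ground state $u^-$ and a maximal one $u^+$.

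First I would construct the plane-like ground state for rational directions $\nu\in S^1\cap\Q^2$. Set $T_\nu:=\mathbb{Z}^2\cap\{\nu\}^\perp$, a rank-one sublattice of $\mathbb{Z}^2$. For $\lambda>0$ consider the class
$$
\mathcal F_\lambda^\nu:=\left\{u\colon\mathbb{Z}^2_M\to\{\pm 1\}\;:\;u \text{ is } T_\nu\text{-periodic},\ u(x)=u_{0,\nu}(P_2 x)\ \forall x\ \text{with}\ \dist(P_2 x,\{\nu\}^\perp)>\lambda\right\},
$$
which is finite modulo $T_\nu$. The energy per fundamental domain $D$ of $T_\nu$, $F_\lambda^\nu(u):=E_M(u,D)$, therefore attains its minimum, and by submodularity admits a smallest minimiser $u^-_{\lambda,\nu}$. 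The classical Birkhoff-type comparison --- apply submodularity to $u^-_{\lambda,\nu}$ and its integer translates $\tau_k u^-_{\lambda,\nu}$, then use the minimality of $u^-_{\lambda,\nu}$ --- propagates a one-sided rigidity of $u^-_{\lambda,\nu}$ against its translates, from which one deduces that for $\lambda$ larger than some threshold $\lambda_0$ the strip constraint is \emph{inactive}: $u^-_{\lambda,\nu}$ is then in fact an unconstrained ground state of $E_M$ whose set of interfaces is entirely contained in the strip $\{x:\dist(P_2x,\{\nu\}^\perp)\le\lambda_0\}$.

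For the quantitative bound $\lambda_0\le CM$ with $C$ independent of $\nu$ and $M$, the argument is a direct energy comparison. The competitor $u_{0,\nu}\circ P_2$ belongs to $\mathcal F_\lambda^\nu$ and has energy of order $M$ per fundamental domain (the finite-range interactions crossing the planar interface contribute $O(1)$ in each of the $M$ layers), so $F_\lambda^\nu(u^-_{\lambda,\nu})\le O(M)$. On the other hand, whenever the minimal ground state has a disagreement between nearest-neighbours in the horizontal plane $\mathbb{Z}^2\times\{h\}$, the coercivity $c(\pm e_i)\ge c_0$ contributes at least $2c_0$; a slicing argument along any line transverse to $\{\nu\}^\perp$ in a fixed layer then forces the number of such crossings in a fundamental domain to be at most $O(M/c_0)$, which, combined with the Birkhoff comparability, bounds the width of the interface stripe by $CM$. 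Irrational directions are handled by approximation: pick $\nu_n\in\Q^2\cap S^1$ with $\nu_n\to\nu$, use the $n$-uniform bound $\lambda_0\le CM$ and the pointwise compactness of $\{\pm 1\}^{\mathbb{Z}^2_M}$ to extract a subsequential limit; the plane-like property survives pointwise, and the ground-state inequality is preserved because Remark~\ref{monotonicity} reduces it to a finite-range comparison.

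The main obstacle is making $\lambda_0$ genuinely uniform in $\nu$, i.e.\ independent of the period $|T_\nu|$, which is unbounded as $\nu$ becomes ``more irrational''. A naive proof confined to one fundamental domain of $T_\nu$ produces constants involving $|\det T_\nu|$. The remedy, as in \cite{CadlF}, is to carry out the Birkhoff comparison and the $c_0$-slicing argument on an infinite stripe parallel to $\{\nu\}^\perp$ and to normalise energies per unit length of $\{\nu\}^\perp$ rather than per period. The periodicity is then used only to guarantee the initial existence of constrained minimisers, while submodularity localises all quantitative information to a region of width $O(M)$ in the $\nu$-direction.
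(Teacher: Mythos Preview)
Your overall strategy matches the paper's: submodularity, constrained periodic minimisers for rational $\nu$, infimal minimiser, Birkhoff property, a quantitative stripe bound, and a compactness limit for irrational $\nu$. The two places where your sketch diverges from the paper deserve comment.

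\medskip
\textbf{The quantitative bound $\lambda_0\le CM$.} Your argument here has a genuine gap. Counting horizontal nearest-neighbour disagreements in a fixed layer and bounding their number by $O(M/c_0)$ does \emph{not} bound the stripe width: by the Birkhoff property the interface in each layer is monotone, so a transverse slice meets it exactly once regardless of where it sits, and nothing prevents all $M$ layers from having their interface at height $\lambda$ with total horizontal crossing count $O(M)$. The paper's mechanism is different and uses the \emph{vertical} coercivity $c(\pm e_3)\ge c_0$. Suppose the constraint is active, i.e.\ some layer $\{x_3=l\}$ has $u=+1$ throughout $\{\langle P_2 x,\nu\rangle\le\lambda-\sqrt2\}$. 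A shift argument (translate by $k$ with $\langle k,\nu\rangle<0$ and invoke the infimal-minimiser property) then forces some other layer $\{x_3=l'\}$ to have $u=-1$ already at $\langle P_2 x,\nu\rangle\le\sqrt2$. Writing $p_r$ for the interface position in layer $r$, one has $\bigl|\sum_{r=l'}^{l-1}(p_{r+1}-p_r)\bigr|\ge\lambda-2\sqrt2$, and the vertical bonds between consecutive layers contribute at least $c_0\sum_r|p_{r+1}-p_r|\ge c_0(\lambda-2\sqrt2)$ to $E_M$ on a fundamental domain. Comparing with the $O(M)$ upper bound gives $\lambda\le CM$. The slab geometry enters precisely here: without the third coordinate there would be no such linking.

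\medskip
\textbf{Uniformity in $\nu$.} You correctly flag the danger that constants depend on the period $|T_\nu|$, and propose to cure it by normalising per unit length along $\{\nu\}^\perp$. The paper takes a cleaner route: a no-symmetry-breaking lemma (Lemma~\ref{nosymmetrybreaking}) shows that the infimal minimiser for period $m$ coincides with the one for period $1$, so all estimates can be carried out on the fundamental domain $\mathcal F_{1,\nu}$ of $\mathbb{Z}^2/\mathbb{Z}_\nu$. On this domain both the planar upper bound and the vertical lower bound scale identically with $\nu$, so the quotient is automatically uniform. This replaces your infinite-stripe renormalisation by a single algebraic reduction and is worth incorporating.
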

The proof of this theorem is very similar to \cite{CadlF,CoDiVa}. We first construct a particular minimizer among periodic configurations that enjoys several geometric properties. To this end, we need further notation (see \cite{CadlF} for more details). Fix a rational direction $\nu\in S^1\cap\mathbb{Q}^2$; we define the $\mathbb{Z}$-module
$\mathbb{Z}_{\nu}=\{z\in\mathbb{Z}^2:\;\langle z,\nu\rangle=0\}$ and, given $m\in\mathbb{N}$, we let $\mathcal{F}_{m,\nu}$ be any fundamental domain of the quotient $\mathbb{Z}^2_{/ m\mathbb{Z}_{\nu}}$, that is for every $z\in\mathbb{Z}^2$ there exist unique $z_{1}\in m\mathbb{Z}_{\nu}$ and $z_{2}\in\mathcal{F}_{m,\nu}$ such that $z=z_{1}+z_{2}$. Given real numbers $\theta$ and $\lambda$, with $\theta<\lambda$, we further introduce
\begin{equation*}
\mathcal{F}_{m,\nu}^{\theta,\lambda}=\{z\in\mathcal{F}_{m,\nu}:\;\langle\nu, z\rangle\in [\theta,\lambda]\}.
\end{equation*}
Now we define an admissible class of periodic configurations: A function $u:\mathbb{Z}^2_M\to\{\pm 1\}$ is called $(m,\nu)$-{\it periodic} if $u(x)=u(x+m(z,0))$ for every $x\in\mathbb{Z}^2_M$ and every $z\in\mathbb{Z}_{\nu}$. We set
\begin{equation*}
\mathcal{A}_{m,\nu}^{\theta,\lambda}=\{u\text{ is }(m,\nu)\text{-periodic},\;u=+1\text{ if }\langle P_2(z),\nu\rangle <\theta,\,u(z)=-1\text{ if }\langle P_2(z),\nu\rangle >\lambda\}.
\end{equation*}
We start with a very elementary lemma, that shows how that for periodic functions any translation gives the same energy.
\begin{lemma}\label{elementary}
Let $u$ be $(m,\nu)$-periodic and $k\in\mathbb{Z}^2$. Then it holds that
\begin{equation*}
E_M(\tau_ku,\mathcal{F}_{m,\nu})=E_M(u,\mathcal{F}_{m,\nu}).
\end{equation*}	
\end{lemma}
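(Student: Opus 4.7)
The plan is to reduce the statement to a change-of-variable argument on the fundamental domain. The first step is to use the shift-invariance formula \eqref{shiftenergy}, which after setting $\Gamma=\mathcal{F}_{m,\nu}-k$ (so that $\tau_k\Gamma=\mathcal{F}_{m,\nu}$) gives
\begin{equation*}
E_M(\tau_k u,\mathcal{F}_{m,\nu})=E_M(u,\mathcal{F}_{m,\nu}-k).
\end{equation*}
Hence it suffices to prove that, for any $k\in\mathbb{Z}^2$ and any $(m,\nu)$-periodic $u$,
\begin{equation*}
E_M(u,\mathcal{F}_{m,\nu}-k)=E_M(u,\mathcal{F}_{m,\nu}).
\end{equation*}

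The second step is a bijective reindexing. By definition of a fundamental domain, every $\bar{x}\in\mathbb{Z}^2$ decomposes uniquely as $\bar{x}=a(\bar{x})+b(\bar{x})$ with $a(\bar{x})\in m\mathbb{Z}_\nu$ and $b(\bar{x})\in\mathcal{F}_{m,\nu}$. I would then define
\begin{equation*}
\sigma:(\mathcal{F}_{m,\nu}-k)_M\to (\mathcal{F}_{m,\nu})_M,\qquad\sigma(\bar{x},x_3):=\bigl(b(\bar{x}),x_3\bigr),
\end{equation*}
which is a bijection since the $z$-coordinate is untouched and the projection $\bar{x}\mapsto b(\bar{x})$ is one-to-one on any fundamental domain of $\mathbb{Z}^2/m\mathbb{Z}_\nu$, in particular on $\mathcal{F}_{m,\nu}-k$.

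The third and final step is to rewrite the energy as a sum over increments. Using that the interactions have finite range $L$, set $\Xi=\{\xi\in\mathbb{Z}^3:|\xi|<L\}$ and write
\begin{equation*}
E_M(u,\Gamma)=\sum_{x\in\Gamma_M}\sum_{\substack{\xi\in\Xi\\ x+\xi\in\mathbb{Z}^2_M}}c(-\xi)\,|u(x)-u(x+\xi)|
\end{equation*}
for $\Gamma=\mathcal{F}_{m,\nu}-k$. Since $(a(\bar{x}),0)\in m\mathbb{Z}_\nu\times\{0\}$, the $(m,\nu)$-periodicity of $u$ yields $u(x)=u(\sigma(x))$ and $u(x+\xi)=u(\sigma(x)+\xi)$; moreover, the membership condition $x+\xi\in\mathbb{Z}^2_M$ is equivalent to $\sigma(x)+\xi\in\mathbb{Z}^2_M$ because $\sigma$ preserves the third coordinate. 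Substituting the bijection $\sigma$ transforms the sum over $(\mathcal{F}_{m,\nu}-k)_M$ into the sum over $(\mathcal{F}_{m,\nu})_M$, giving $E_M(u,\mathcal{F}_{m,\nu}-k)=E_M(u,\mathcal{F}_{m,\nu})$ as desired. There is no real obstacle here; the only care is to observe that the bijection and periodicity act only on the first two coordinates, which is exactly how $(m,\nu)$-periodicity and the set $\mathbb{Z}^2_M$ are defined.
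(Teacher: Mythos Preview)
Your proof is correct and follows essentially the same approach as the paper: apply the shift formula \eqref{shiftenergy} to reduce to comparing $E_M(u,\mathcal{F}_{m,\nu}-k)$ with $E_M(u,\mathcal{F}_{m,\nu})$, then reindex via the fundamental-domain decomposition $\bar{x}=a(\bar{x})+b(\bar{x})$ and use $(m,\nu)$-periodicity. The only cosmetic difference is that the paper verifies injectivity of the reindexing map and concludes an inequality, then applies the same reasoning to $\tau_{-k}$ and $\tau_k u$ for the reverse inequality, whereas you observe directly that the map is a bijection between two fundamental domains and obtain equality in one step.
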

\begin{proof}
Given $x\in(\tau_{-k}\mathcal{F}_{m,\nu})_M$, we find $z_1(x)\in m\mathbb{Z}_{\nu}$ and $z_2(x)\in\mathcal{F}_{m,\nu}$ such that $P_2(x)=z_1(x)+z_2(x)$. By $(m,\nu)$-periodicity, for any $y\in\mathbb{Z}^2_M$ it holds that
\begin{align*}
|u(x)-u(y)|&=|u(x-(z_1(x),0))-u(y-(z_1(x),0))|,
\\
c(x-y)&=c(x-(z_1(x),0)-y+(z_1(x),0)).
\end{align*}
Now assume that there exist another $x^{\prime}\in(\tau_{-k}\mathcal{F}_{m,\nu})_M\backslash\{x\}$ with $\langle x-x^{\prime},e_3\rangle=0$ and $z_2(x)=z_2(x^{\prime})$. Then $\tau_kP_2(x)-\tau_kP_2(x^{\prime})=z_1(x)-z_1(x^{\prime})\in m\mathbb{Z}_{\nu}\backslash\{(0,0)\}$. As $\tau_kP_2(x),\tau_kP_2(x^{\prime})\in\mathcal{F}_{m,\nu}$ this contradicts the fact that $\mathcal{F}_{m,\nu}$ is a fundamental domain. Using (\ref{shiftenergy}) we conclude by comparison that
\begin{equation*}
E_M(\tau_ku,\mathcal{F}_{m,\nu})=E_M(u,\tau_{-k}\mathcal{F}_{m,\nu})\leq E_M(u,\mathcal{F}_{m,\nu}).
\end{equation*}
Applying the above inequality to $\tau_{-k}$ and $\tilde{u}:=\tau_{k}u$, which is also $(m,\nu)$-periodic, we obtain the claim.
\end{proof}
\hspace*{0,5cm}
We define the class of minimizers for the energy $E_M(\cdot,\mathcal{F}_{m,\nu})$ on $\mathcal{A}_{m,\nu}^{\theta,\lambda}$ via
\begin{equation*}
\mathcal{M}_{m,\nu}^{\theta,\lambda}=\{u\in\mathcal{A}_{m,\nu}^{\theta,\lambda}:\;E_M(u,\mathcal{F}_{m,\nu})\leq E_M(v,\mathcal{F}_{m,\nu})\text{ for all }  v\in\mathcal{A}_{m,\nu}^{\theta,\lambda}\}.
\end{equation*}
As the set $\mathcal{A}_{m,\nu}^{\theta,\lambda}$ is finite, the class of minimizers is non-empty. Next we define the so-called infimal minimizer which has several useful properties.
\begin{equation*}
u_{m,\nu}^{\theta,\lambda}=\min\{u\in\mathcal{M}_{m,\nu}^{\theta,\lambda}\}\in \mathcal{A}_{m,\nu}^{\theta,\lambda}.
\end{equation*}
We next show that the infimal minimizer also belongs to the class of minimizers. This follows from the following elementary observation (see Lemma 2.1 and also Lemma 2.3 in \cite{CoDiVa}).
\begin{lemma}\label{maxmin}
Given any $u:\mathbb{Z}^2_M\to\{\pm 1\}$ and $\Gamma\in\mathbb{Z}^2$ finite, it holds that
\begin{equation*}
E_M(\min\{u,v\},\Gamma)+E_M(\max\{u,v\},\Gamma)\leq E_M(u,\Gamma)+E_M(v,\Gamma).
\end{equation*}
\end{lemma}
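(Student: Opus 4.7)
The plan is to reduce the inequality to a purely pointwise statement at the level of individual interaction pairs, and then to verify this pointwise statement by a short case analysis (or equivalently, by the submodularity of the discrete perimeter). Since $c\geq 0$ and
\begin{equation*}
E_M(w,\Gamma)=\sum_{x\in\Gamma_M}\sum_{y\in\mathbb{Z}^2_M}c(x-y)|w(x)-w(y)|
\end{equation*}
for any $w$, it suffices to establish, for all pairs $x,y\in\mathbb{Z}^2_M$, the pointwise bound
\begin{equation*}
|\min\{u,v\}(x)-\min\{u,v\}(y)|+|\max\{u,v\}(x)-\max\{u,v\}(y)|\leq |u(x)-u(y)|+|v(x)-v(y)|,
\end{equation*}
summing the result with weights $c(x-y)$ over $x\in\Gamma_M$, $y\in\mathbb{Z}^2_M$ yields the lemma.

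To prove the pointwise inequality, I would rewrite everything in terms of $\pm 1$-level sets. Setting $A:=\{u=+1\}$ and $B:=\{v=+1\}$, one has $\{\min\{u,v\}=+1\}=A\cap B$ and $\{\max\{u,v\}=+1\}=A\cup B$, while for any $\{\pm 1\}$-valued function $w$ with level set $W$ one has $|w(x)-w(y)|=2|\mathds{1}_W(x)-\mathds{1}_W(y)|$. Thus the claim is equivalent to the classical submodularity identity
\begin{equation*}
|\mathds{1}_{A\cap B}(x)-\mathds{1}_{A\cap B}(y)|+|\mathds{1}_{A\cup B}(x)-\mathds{1}_{A\cup B}(y)|\leq|\mathds{1}_{A}(x)-\mathds{1}_{A}(y)|+|\mathds{1}_{B}(x)-\mathds{1}_{B}(y)|,
\end{equation*}
which in turn is verified by inspecting the finitely many configurations of $(\mathds{1}_A(x),\mathds{1}_A(y),\mathds{1}_B(x),\mathds{1}_B(y))\in\{0,1\}^4$; in each case both sides evaluate to an explicit number in $\{0,2,4\}$ and the inequality holds (indeed, equality holds unless $x$ and $y$ separate $A$ and $B$ in opposite ways, in which case the left-hand side is $0$).

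There is essentially no obstacle here: the lemma is a local, combinatorial statement, and the only mild subtlety is noting that the inequality survives weighting by the nonnegative coefficients $c(x-y)$ and summing (with $x$ restricted to $\Gamma_M$ while $y$ ranges over $\mathbb{Z}^2_M$), so that no pair is omitted or double-counted relative to the decomposition used to define $E_M(\cdot,\Gamma)$.
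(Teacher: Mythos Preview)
Your argument is correct. The paper does not actually give a proof of this lemma; it merely cites \cite{CoDiVa} and calls the statement an ``elementary observation''. Your reduction to the pointwise submodularity inequality for indicator functions, followed by a case check on $\{0,1\}^4$, is exactly the standard way to justify it and is what the referenced source does. One cosmetic remark: in the indicator form the two sides take values in $\{0,1,2\}$, not $\{0,2,4\}$; the latter range applies to the original $\{\pm 1\}$-valued form before dividing by $2$.
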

Iterating the above lemma finitely many times we find that $u_{m,\nu}^{\theta,\lambda}\in\mathcal{M}_{m,\nu}^{\theta,\lambda}$. 
\\
\hspace*{0,5cm}
We now turn to the first property of the infimal minimizer. This is the so-called absence of symmetry breaking, which says that the infimal minimizer does not depend on the length $m$ of the period.
\begin{lemma}\label{nosymmetrybreaking}
For any $m\in\mathbb{N}$ it holds that $u_{m,\nu}^{\theta,\lambda}=u_{1,\nu}^{\theta,\lambda}$.	
\end{lemma}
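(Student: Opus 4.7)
The plan is to show that the infimal minimizer $u_{m,\nu}^{\theta,\lambda}$ is automatically invariant under $\tau_k$ for every $k\in\mathbb{Z}_\nu$, which forces it to be $(1,\nu)$-periodic, and then to compare infimal minimizers across the two admissible classes.

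First, I will verify that translations in the direction perpendicular to $\nu$ preserve the class of minimizers. Fix $k\in\mathbb{Z}_\nu$ and set $v:=\tau_k u_{m,\nu}^{\theta,\lambda}$. Since $k\in\mathbb{Z}_\nu$ means $\langle k,\nu\rangle=0$, the half-space conditions defining $\mathcal{A}_{m,\nu}^{\theta,\lambda}$ are unchanged under $\tau_k$, and $(m,\nu)$-periodicity is preserved because $m\mathbb{Z}_\nu$ is a subgroup of $\mathbb{Z}^2$ containing $k\mathbb{Z}$-shifts trivially on equivalence classes. Combining this with Lemma \ref{elementary} gives $E_M(v,\mathcal{F}_{m,\nu})=E_M(u_{m,\nu}^{\theta,\lambda},\mathcal{F}_{m,\nu})$, so $v\in\mathcal{M}_{m,\nu}^{\theta,\lambda}$. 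The infimality of $u_{m,\nu}^{\theta,\lambda}$ then yields $u_{m,\nu}^{\theta,\lambda}\leq \tau_k u_{m,\nu}^{\theta,\lambda}$ pointwise; applying the same inequality with $-k$ and pushing forward by $\tau_k$ shows $\tau_k u_{m,\nu}^{\theta,\lambda}\leq u_{m,\nu}^{\theta,\lambda}$ as well, so $u_{m,\nu}^{\theta,\lambda}=\tau_k u_{m,\nu}^{\theta,\lambda}$ for every $k\in\mathbb{Z}_\nu$. Thus $u_{m,\nu}^{\theta,\lambda}$ is $(1,\nu)$-periodic, hence $u_{m,\nu}^{\theta,\lambda}\in\mathcal{A}_{1,\nu}^{\theta,\lambda}\subset\mathcal{A}_{m,\nu}^{\theta,\lambda}$.

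The remaining step is to identify $u_{m,\nu}^{\theta,\lambda}$ with $u_{1,\nu}^{\theta,\lambda}$. Choosing $\mathcal{F}_{m,\nu}$ as a disjoint union of $m$ translates of $\mathcal{F}_{1,\nu}$ by elements of $\mathbb{Z}_\nu$ and again invoking Lemma \ref{elementary}, I obtain $E_M(w,\mathcal{F}_{m,\nu})=m\,E_M(w,\mathcal{F}_{1,\nu})$ for every $(1,\nu)$-periodic $w$. Since $u_{1,\nu}^{\theta,\lambda}\in\mathcal{A}_{m,\nu}^{\theta,\lambda}$, the minimality of $u_{m,\nu}^{\theta,\lambda}$ on $\mathcal{F}_{m,\nu}$ combined with this factorization yields $E_M(u_{m,\nu}^{\theta,\lambda},\mathcal{F}_{1,\nu})\leq E_M(u_{1,\nu}^{\theta,\lambda},\mathcal{F}_{1,\nu})$, so $u_{m,\nu}^{\theta,\lambda}\in\mathcal{M}_{1,\nu}^{\theta,\lambda}$. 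The infimality of $u_{1,\nu}^{\theta,\lambda}$ within $\mathcal{M}_{1,\nu}^{\theta,\lambda}$ then gives $u_{1,\nu}^{\theta,\lambda}\leq u_{m,\nu}^{\theta,\lambda}$, while the infimality of $u_{m,\nu}^{\theta,\lambda}$ within $\mathcal{M}_{m,\nu}^{\theta,\lambda}$ (noting $u_{1,\nu}^{\theta,\lambda}$ lies in both classes and both functions have the same energy on $\mathcal{F}_{m,\nu}$) gives the reverse inequality $u_{m,\nu}^{\theta,\lambda}\leq u_{1,\nu}^{\theta,\lambda}$. Equality follows.

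The only real subtlety is the bookkeeping for the first step, namely checking that $\tau_k$ with $k\in\mathbb{Z}_\nu$ genuinely maps $\mathcal{M}_{m,\nu}^{\theta,\lambda}$ to itself; once this is established the rest is a formal consequence of how the infimal minimizer is defined together with Lemma \ref{maxmin}, which has already been used to guarantee that $u_{m,\nu}^{\theta,\lambda}$ itself is a minimizer.
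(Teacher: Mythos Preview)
Your proof is correct and follows essentially the same strategy as the paper: first show that $u_{m,\nu}^{\theta,\lambda}$ is $(1,\nu)$-periodic, then compare minimizing classes via the factorization $E_M(w,\mathcal{F}_{m,\nu})=m\,E_M(w,\mathcal{F}_{1,\nu})$ for $(1,\nu)$-periodic $w$. The only difference is in the first step: the paper sets $u=\min\{\tau_k u_{m,\nu}^{\theta,\lambda}:k\in\mathbb{Z}_\nu\}$, uses Lemma~\ref{maxmin} to see that $u\in\mathcal{M}_{m,\nu}^{\theta,\lambda}$, and then concludes $u=u_{m,\nu}^{\theta,\lambda}$ by infimality; you instead apply infimality directly to the pair $\tau_{\pm k}u_{m,\nu}^{\theta,\lambda}$, obtaining both inequalities without invoking Lemma~\ref{maxmin} at this point. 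Your variant is slightly more economical, while the paper's min-construction makes the $(1,\nu)$-periodic candidate explicit; the remainder of the argument is identical in both.
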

\begin{proof}
We define an auxiliary configuration via $u=\min\{\tau_ku_{m,\nu}^{\theta,\lambda}:\;k\in\mathbb{Z}_{\nu}\}$. By elementary arguments it follows that $u\in\mathcal{A}_{1,\nu}^{\theta,\lambda}$, while Lemma \ref{elementary} implies that $\tau_ku_{m,\nu}^{\theta,\lambda}\in\mathcal{M}_{m,\nu}^{\theta,\lambda}$ and by iterating Lemma \ref{maxmin} we obtain that $u\in\mathcal{M}_{m,\nu}^{\theta,\lambda}$. Since $u\leq u_{m,\nu}^{\theta,\lambda}$, by definition of the infimal minimizer and  we obtain that $u=u_{m,\nu}^{\theta,\lambda}$. Moreover, as $u$ and $u_{1,\nu}^{\theta,\lambda}$ are both $(1,\nu)$-periodic it follows that
\begin{equation}\label{periodicenergy}
E_M(u,\mathcal{F}_{1,\nu})=\frac{1}{m}E_M(u,\mathcal{F}_{m,\nu})\leq \frac{1}{m}E_M(u_{1,\nu}^{\theta,\lambda},\mathcal{F}_{m,\nu})=E_M(u^{\theta,\lambda}_{1,\nu},\mathcal{F}_{1,\nu}).
\end{equation}
In particular we deduce that $u\in\mathcal{M}_{1,\nu}^{\theta,\lambda}$ and thus $u\geq u_{1,\nu}^{\theta,\lambda}$. On the other hand, (\ref{periodicenergy}) must be an equality, so that $u_{1,\nu}^{\theta,\lambda}\in\mathcal{M}_{m,\nu}^{\theta,\lambda}$ and therefore $u_{1,\nu}^{\theta,\lambda}\geq u$. This proves the claim.
\end{proof}
We next establish the so-called Birkhoff property of the infimal minimizer which will be the main ingredient for the proof of Theorem \ref{existenceplanelike}.
\begin{lemma}\label{birkhoff}
Let $k\in\mathbb{Z}^2$. Then 
$\tau_ku_{1,\nu}^{\theta,\lambda}
\leq u_{1,\nu}^{\theta,\lambda }$ if $\langle k,\nu\rangle\leq 0$ and $\tau_ku_{1,\nu}^{\theta,\lambda}
\geq u_{1,\nu}^{\theta,\lambda}$ if $\langle k,\nu\rangle\geq 0$.
\end{lemma}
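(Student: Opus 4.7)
The plan is a sliding-rearrangement argument in the spirit of the classical Caffarelli--de la Llave proof of the Birkhoff property. Write $u := u_{1,\nu}^{\theta,\lambda}$ and, for $k \in \mathbb{Z}^2$ with $\langle k, \nu\rangle \le 0$, set $\theta' := \theta + \langle k, \nu\rangle$ and $\lambda' := \lambda + \langle k, \nu\rangle$, so $\theta' \le \theta$ and $\lambda' \le \lambda$. The shifted configuration $\tau_k u$ is still $(1,\nu)$-periodic, satisfies $E_M(\tau_k u, \mathcal{F}_{1,\nu}) = E_M(u, \mathcal{F}_{1,\nu})$ by Lemma \ref{elementary}, and belongs to $\mathcal{A}_{1,\nu}^{\theta', \lambda'}$. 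A direct sign analysis, using $\theta' \le \theta$ and $\lambda' \le \lambda$, then shows that
\begin{equation*}
\min\{u, \tau_k u\} \in \mathcal{A}_{1,\nu}^{\theta', \lambda'} \quad\text{and}\quad \max\{u, \tau_k u\} \in \mathcal{A}_{1,\nu}^{\theta, \lambda}.
\end{equation*}

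Next, applying Lemma \ref{maxmin} with $\Gamma = \mathcal{F}_{1,\nu}$ and combining with Lemma \ref{elementary} yields
\begin{equation*}
E_M(\min\{u, \tau_k u\}, \mathcal{F}_{1,\nu}) + E_M(\max\{u, \tau_k u\}, \mathcal{F}_{1,\nu}) \le 2\,E_M(u, \mathcal{F}_{1,\nu}).
\end{equation*}
Since $u$ minimizes on $\mathcal{A}_{1,\nu}^{\theta,\lambda}$ and $\tau_k u$ minimizes on $\mathcal{A}_{1,\nu}^{\theta', \lambda'}$ (this last step uses Lemma \ref{elementary} again), each of the two terms on the left-hand side is bounded below by $E_M(u, \mathcal{F}_{1,\nu})$. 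Hence both inequalities are saturated, and in particular $\min\{u, \tau_k u\}$ and $\max\{u, \tau_k u\}$ are themselves minimizers in their respective classes.

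To conclude, I would identify the infimal minimizer of the shifted class: the translation $v \mapsto \tau_k v$ is an energy-preserving bijection between $\mathcal{M}_{1,\nu}^{\theta, \lambda}$ and $\mathcal{M}_{1,\nu}^{\theta', \lambda'}$ that commutes with pointwise infimum, so $u_{1,\nu}^{\theta', \lambda'} = \tau_k u$. The infimality of $\tau_k u$ in $\mathcal{M}_{1,\nu}^{\theta', \lambda'}$ therefore gives
\begin{equation*}
\tau_k u \le \min\{u, \tau_k u\} \le u,
\end{equation*}
which is the Birkhoff inequality for $\langle k, \nu\rangle \le 0$; the case $\langle k, \nu\rangle \ge 0$ follows by applying the above to $-k$ and using that shifts preserve the pointwise ordering.

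The only delicate point, and the step most worth checking carefully, is that $E_M(\cdot, \mathcal{F}_{1,\nu})$ is genuinely finite on all of the classes above, despite $\mathcal{F}_{1,\nu}$ being infinite in the $\nu$-direction. This is guaranteed by the admissibility condition: every configuration in $\mathcal{A}_{1,\nu}^{\theta,\lambda}$ or $\mathcal{A}_{1,\nu}^{\theta',\lambda'}$ is constant off a finite $\nu$-slab, so only finitely many interactions between points of $\mathcal{F}_{1,\nu}$ and $\mathbb{Z}^2_M$ are nonzero. Once this finiteness is in hand, the rearrangement argument is self-contained, and no further input is needed beyond Lemmas \ref{elementary} and \ref{maxmin} and the equivariance of the infimal-minimizer construction under $\mathbb{Z}^2$-shifts.
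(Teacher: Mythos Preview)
Your proposal is correct and follows essentially the same approach as the paper's own proof: both arguments place $\min\{u,\tau_k u\}$ and $\max\{u,\tau_k u\}$ in the shifted and original admissible classes respectively, apply Lemma~\ref{maxmin} together with Lemma~\ref{elementary}, identify $\tau_k u$ as the infimal minimizer $u_{1,\nu}^{\theta',\lambda'}$ of the shifted class, and conclude $\tau_k u\le\min\{u,\tau_k u\}\le u$. Your organization of the energy inequalities (bounding each term from below by $E_M(u,\mathcal{F}_{1,\nu})$ to force saturation) is a minor repackaging of the paper's chain of inequalities, and your explicit finiteness remark is a welcome addition that the paper leaves implicit.
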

\begin{proof}
We start with the case $\langle k,\nu\rangle\leq 0$ and define the two configurations $m=\min\{u_{1,\nu}^{\theta,\lambda},\tau_ku_{1,\nu}^{\theta,\lambda}\}$ and $M=\max\{u_{1,\nu}^{\theta,\lambda},\tau_ku_{1,\nu}^{\theta,\lambda}\}$. By elementary considerations one can prove that $m\in\mathcal{A}_{1,\nu}^{\theta+\langle k,\nu\rangle,\lambda+\langle k,\nu\rangle}$ and $M\in\mathcal{A}_{1,\nu}^{\theta,\lambda}$. Using Lemma \ref{maxmin} we obtain
\begin{equation*}
E_M(m,\mathcal{F}_{1,\nu})+E_M(u_{1,\nu}^{\theta,\lambda},\mathcal{F}_{1,\nu})\leq E_M(m,\mathcal{F}_{1,\nu})+E_M(M,\mathcal{F}_{1,\nu})\leq E_M(\tau_ku_{1,\nu}^{\theta,\lambda},\mathcal{F}_{1,\nu})+E_M(u_{1,\nu}^{\theta,\lambda},\mathcal{F}_{1,\nu}),
\end{equation*}
which yields $E_M(m,\mathcal{F}_{1,\nu})\leq E_M(\tau_ku_{1,\nu}^{\theta,\lambda},\mathcal{F}_{1,\nu})$. We claim that $\tau_ku_{1,\nu}^{\theta,\lambda}=u_{1,\nu}^{\theta+\langle k,\nu\rangle,\lambda+\langle k,\nu\rangle}$. Indeed, as $\tau_ku_{1,\nu}^{\theta,\lambda}\in\mathcal{A}_{1,\nu}^{\theta+\langle k,\nu\rangle,\lambda+\langle k,\nu\rangle}$ this configuration is admissible and minimality follows by Lemma \ref{elementary}. Now assume it wouldn't be the infimal minimizer, then also $u_{1,\nu}^{\theta,\lambda}$ is not the infimal minimizer as we could construct a smaller one by translation of the other infimal minimizer.
\\
\hspace*{0,5cm}
By definition of the infimal minimizer we infer that $m\geq \tau_ku_{1,\nu}^{\theta,\lambda}$, which proves the claim by definition of $m$. The case $\langle k,\nu\rangle\geq 0$ follows upon applying the translation $\tau_k$ to the inequality $\tau_{-k}u_{1,\nu}^{\theta,\lambda}\leq u_{1,\nu}^{\theta,\lambda}$ which holds by the first part of the proof.
\end{proof}
In the next lemma we deduce a powerful property of configurations fulfilling the Birkhoff property.
\begin{lemma}\label{rigidity}
Let $u:\mathbb{Z}^2_M\to\{\pm 1\}$ satisfy the {\em Birkhoff property} with respect to $\nu\in S^1\cap\mathbb{Q}^2$; that means that $\tau_ku
\leq u$ if $\langle k,\nu\rangle\leq 0$, and $\tau_ku\geq u$ if $\langle k,\nu\rangle\geq 0$.
Assume further that $u(x_0)=-1$ for some $x_0\in\mathbb{Z}^2_M$. Then 
$u(x)=-1$ for all $x\in\mathbb{Z}^2_M$ such that $\langle x-x_0,e_3\rangle=0$ and $\langle P_2(x-x_0),\nu\rangle\geq 0$.
\end{lemma}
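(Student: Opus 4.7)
The plan is to prove Lemma \ref{rigidity} by a single direct application of the Birkhoff property, using a judiciously chosen integer shift vector; no iteration or induction is required because the statement already compares $x$ directly with $x_0$ (rather than, say, with a remote point), so one translation suffices.

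Given $x\in\mathbb{Z}^2_M$ with $\langle x-x_0,e_3\rangle=0$ and $\langle P_2(x-x_0),\nu\rangle\geq 0$, I would set $k:=P_2(x-x_0)\in\mathbb{Z}^2$. The key observation is that this $k$ satisfies both of the following:
\begin{itemize}
\item $\langle k,\nu\rangle=\langle P_2(x-x_0),\nu\rangle\geq 0$, which is exactly the hypothesis required to invoke the ``increasing'' half of the Birkhoff property;
\item $x-(k,0)=x_0$, since the first two coordinates of $x-(k,0)$ equal $P_2(x)-P_2(x-x_0)=P_2(x_0)$, while the third coordinate equals $x_3=x_{0,3}$ thanks to the assumption $\langle x-x_0,e_3\rangle=0$.
\end{itemize}

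With this choice, applying the Birkhoff property $\tau_k u\geq u$ pointwise, evaluated at $x$, yields $u(x_0)=u(x-(k,0))=\tau_k u(x)\geq u(x)$. Since $u(x_0)=-1$ and $u$ only takes values in $\{\pm 1\}$, this forces $u(x)=-1$, which is the claim.

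There is no real obstacle here: the statement is essentially a tautological reformulation of the Birkhoff property once one recognizes that the correct shift vector is $k=P_2(x-x_0)$. The only point that needs verification is that $k$ lies in $\mathbb{Z}^2$ (immediate, since $x,x_0\in\mathbb{Z}^3$) and that the equality $x-(k,0)=x_0$ really holds, which is the reason the hypothesis $\langle x-x_0,e_3\rangle=0$ is present in the statement. I would write the proof as two short displayed identities followed by one line of inequality.
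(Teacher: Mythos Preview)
Your proof is correct and is essentially the same one-line argument as the paper's: the paper writes $x=x_0-(k,0)$ with $\langle k,\nu\rangle\leq 0$ and applies $\tau_k u\leq u$ at $x_0$, whereas you take $k=P_2(x-x_0)$ with $\langle k,\nu\rangle\geq 0$ and apply $\tau_k u\geq u$ at $x$; the two choices differ only by a sign and the point of evaluation.
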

\begin{proof}
Every such $x$ can be written as $x=x_0-(k,0)$ with $k\in\mathbb{Z}^2$ such that $\langle k,\nu\rangle \leq 0$. Hence Lemma \ref{birkhoff} implies that $u(x)=\tau_ku(x_0)\leq u(x_0)=-1$, so that $u(x)=-1$.
\end{proof}
We are now in a position to prove that the infimal minimizer becomes unconstrained when we take $\theta=0$ and $\lambda$ large enough. To reduce notation, from now on we set $u_{\nu}^\lambda:=u_{1,\nu}^{0,\lambda}$.
\begin{lemma}\label{unconstrainedI}
There exists $\lambda_0>0$ (depending on $M$ in such a way that $\lambda_0\leq CM$) such that for all $\lambda\geq \lambda_0$ it holds
$u_{\nu}^\lambda(x)=-1$ for all $x\in\mathbb{Z}^2_M$ such that $\langle P_2(x),\nu\rangle\geq \lambda-\sqrt{2}$.
\end{lemma}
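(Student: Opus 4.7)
The plan is to argue by contradiction. Assume there exists $x_*\in\mathbb{Z}^2_M$ with $\ell_*:=\langle P_2(x_*),\nu\rangle\geq\lambda-\sqrt{2}$ and $u_\nu^\lambda(x_*)=+1$; the strategy is to establish a $\lambda$-independent bound $L_k\leq C_*M$ on the ``slice thresholds'' of $u_\nu^\lambda$, which is incompatible with the assumption once $\lambda\geq C_*M+\sqrt{2}$. The key structural input is that, by Lemma \ref{birkhoff}, $u_\nu^\lambda$ enjoys the Birkhoff property; Lemma \ref{rigidity} together with the $(1,\nu)$-periodicity then forces the $+1$-region of $u_\nu^\lambda$ in each slice $k\in\{0,\dots,M\}$ to coincide with a discrete half-plane $\{y\in\mathbb{Z}^2:\ \langle P_2(y),\nu\rangle\leq L_k\}$, where the threshold $L_k$ lies in $g\mathbb{Z}$ and $g>0$ denotes the minimum positive value attained by $\langle\cdot,\nu\rangle$ on $\mathbb{Z}^2$. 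Admissibility forces $-g\leq L_k\leq\lambda$ for every $k$, and the hypothesis on $x_*$ gives $L_{k_*}\geq\lambda-\sqrt{2}$ at $k_*:=\langle x_*,e_3\rangle$.

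Next, I will combine a global energy comparison with a pointwise coercivity bound. Testing against the flat profile $w_0\in\mathcal{A}_{1,\nu}^{0,\lambda}$ defined by $w_0(x)=+1$ on $\{\langle P_2(x),\nu\rangle<0\}$ and $-1$ elsewhere, minimality of $u_\nu^\lambda$ yields
\begin{equation*}
E_M(u_\nu^\lambda,\mathcal{F}_{1,\nu})\leq E_M(w_0,\mathcal{F}_{1,\nu})\leq C_1(M+1)g^{-1},
\end{equation*}
the upper bound coming from the fact that each slice of $\mathcal{F}_{1,\nu}$ contains at most $CLg^{-1}$ lattice points within interaction range $L$ of the hyperplane $\{\langle\cdot,\nu\rangle=0\}$, combined with the boundedness of $c$. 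In the opposite direction, the coercivity $c(\pm e_3)\geq c_0>0$ combined with the Birkhoff picture gives
\begin{equation*}
E_M(u_\nu^\lambda,\mathcal{F}_{1,\nu})\geq \frac{4c_0}{g}\sum_{k=0}^{M-1}|L_k-L_{k+1}|,
\end{equation*}
because precisely $|L_k-L_{k+1}|/g$ of the $e_3$-nearest-neighbour bonds between slices $k$ and $k+1$ are broken in the fundamental domain (one lattice point per $g$-gap). The two $g^{-1}$ factors cancel, so that $\sum_{k=0}^{M-1}|L_k-L_{k+1}|\leq C_*M$ with $C_*:=C_1/(4c_0)$ depending only on $c$ and the interaction range $L$, \emph{not} on $\nu$.

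Telescoping the previous inequality yields $L_k\leq\min_j L_j+\sum_j|L_j-L_{j+1}|\leq -g+C_*M\leq C_*M$ uniformly in $k$. Choosing $\lambda_0:=C_*M+\sqrt{2}+1\leq CM$ for a suitable constant $C$ depending only on $c$ and $L$, the bound $L_{k_*}\leq C_*M<\lambda-\sqrt{2}$ contradicts $L_{k_*}\geq\lambda-\sqrt{2}$ as soon as $\lambda\geq\lambda_0$, completing the proof. The main technical obstacle will be the careful counting of lattice points in the $(1,\nu)$-fundamental domain: ensuring that the upper bound on $E_M(w_0,\mathcal{F}_{1,\nu})$ and the lower bound from broken $e_3$-bonds scale with \emph{exactly} the same factor $g^{-1}$ is what gives uniformity of $\lambda_0$ in $\nu$, which is in turn needed for the approximation argument passing from rational directions (the case at hand) to arbitrary $\nu\in S^{1}$ in the proof of Theorem \ref{existenceplanelike}.
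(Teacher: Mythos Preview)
Your overall strategy---deducing from the Birkhoff property that each slice has a threshold $L_k$, bounding the total variation $\sum_k|L_k-L_{k+1}|$ by comparing the minimal energy against a flat competitor, and then telescoping---is exactly the paper's approach (the paper writes $p_r$ for what is essentially your $L_r+g$). Your energy bounds with the explicit $g^{-1}$ factors are correct and indeed cancel as you say.

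However, there is a genuine gap in the telescoping step. You write
\[
L_{k_*}\leq \min_j L_j+\sum_j|L_j-L_{j+1}|\leq -g+C_*M,
\]
but nothing you have proved gives $\min_j L_j\leq -g$; your admissibility argument only yields the \emph{lower} bound $L_j\geq -g$. A priori nothing prevents all thresholds $L_j$ from sitting near $\lambda$ (the flat competitor with interface at $\lambda$ has the same energy $\sim CM/g$ as the one with interface at $0$), so an upper bound on $\min_j L_j$ that is independent of $\lambda$ is genuinely needed and does not come for free.

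The paper supplies this missing piece by a translation argument that exploits the \emph{infimal} nature of the minimizer: if every slice satisfied $L_j>\sqrt{2}$, pick $k\in\{0,\pm1\}^2$ with $\langle k,\nu\rangle<0$; then $\tau_k u_\nu^\lambda$ still lies in $\mathcal{A}_{1,\nu}^{0,\lambda}$, and by Lemma~\ref{elementary} and Lemma~\ref{birkhoff} it is a minimizer with $\tau_k u_\nu^\lambda\leq u_\nu^\lambda$, forcing $\tau_k u_\nu^\lambda=u_\nu^\lambda$, which contradicts the boundary conditions. This yields a layer $l'$ with $L_{l'}\leq\sqrt{2}$, hence $\min_j L_j\leq\sqrt{2}$, and your telescoping argument then goes through verbatim. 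You should insert this step before the telescoping.
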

\begin{proof}
By Lemma \ref{rigidity} it is enough to show that for large enough $\lambda$, in every layer $\mathbb{Z}^2\times \{l\}$ with $l\in\{0,\dots,M\}$ there exists some $x_l$ such that $\langle P_2(x_l),\nu\rangle\leq \lambda-\sqrt{2}$ and $u_{\nu}^\lambda(x_l)=-1$. We will show that this is always the case provided $\lambda$ is large enough.
\\
\hspace*{0,5cm}
Assume that there exists a layer $\mathbb{Z}^2\times\{l\}$ such that $u_{\nu}^\lambda(x)=1$ for all $x\in\mathbb{Z}^2\times\{l\}$ with $\langle P_2(x),\nu\rangle\leq \lambda-\sqrt{2}$. We argue that in this case there must exists a second layer $\mathbb{Z}^2\times\{l^{\prime}\}$ and a point $x_{l^{\prime}}\in\mathbb{Z}^2\times\{l^{\prime}\}$ with $\langle P_2(x_{l^{\prime}}),\nu\rangle\leq\sqrt{2}$ and $u_{\nu}^\lambda(x_{l^{\prime}})=-1$. Indeed, if this would be false, then the function $\tau_ku_{\nu}^\lambda$ with any $k\in\{0,\pm 1\}^2$ such that $\langle k,\nu\rangle<0$ fulfills $\tau_ku_{\nu}^\lambda\in\mathcal{A}_{1,\nu}^{0,\lambda}$. By Lemma \ref{birkhoff} we further know that $\tau_ku_{\nu}^\lambda\leq u_{\nu}^\lambda$. On the other hand, by Lemma \ref{elementary} we have that $\tau_ku_{\nu}^\lambda\in\mathcal{M}_{1,\nu}^{0,\lambda}$, hence by definition of the infimal minimizer we obtain $\tau_ku_{\nu}^\lambda=u_{\nu}^\lambda$.  This contradicts the boundary conditions by the choice of $k$. Now applying Lemma \ref{rigidity} in the second layer $\mathbb{Z}^2\times\{l^{\prime}\}$ we obtain that $u_{\nu}^\lambda(x)=-1$ for all $x\in\mathbb{Z}^2\times\{l^{\prime}\}$ such that $\langle P_2(x),\nu\rangle\geq\sqrt{2}$. As we will see now, for fixed $M$ this will cost too much energy.
\\
\hspace*{0,5cm}
Without loss of generality we assume that $l>l^{\prime}$, the other case can be treated almost the same way. For every $r\in\{1,\dots,M\}$ there exists $x\in\mathbb{Z}^2\times\{r\}$ such that $u_{\nu}^\lambda(x_r)=-1$. Let $x_r$ be one of such points that minimizes $\langle P_2(x),\nu\rangle$ among all such points. According to Lemma \ref{rigidity} we obtain $u_{\nu}^\lambda(x)=-1$ for all $x\in\mathbb{Z}^2\times\{r\}$ with $\langle P_2(x),\nu\rangle\geq \langle P_2(x_r),\nu\rangle=:p_r$. Note that
\begin{equation}\label{totallength}
\left|\sum_{r=l^{\prime}}^{l-1}(p_{r+1}-p_r)\right|\geq \lambda-2\sqrt{2}.
\end{equation}
On the other hand, just counting the interactions between neighbouring layers, we obtain by the coercivity of the interactions and (\ref{totallength}) that
\begin{equation*}
E_M(u_{\nu}^\lambda,\mathcal{F}_{1,\nu})\geq c\sum_{r=1}^M|p_r-p_{r-1}|\geq c(\lambda-2\sqrt{2}).
\end{equation*}
Testing a discretized plane as a possible minimizer, by the finite range assumption we know an a priori bound of the form $E_M(u_{\nu}^\lambda,\mathcal{F}_{1,\nu})\leq CM$. Hence our assumption can only hold as long as $\lambda\leq CM$ for some constant $C$ not depending on $\nu$ nor on $M$ and the claim follows upon setting $\lambda_0=2CM$.
\end{proof}
The next (and last) lemma  bounds the oscillation of the jump set of the infimal minimizer $u_{\nu}^{\lambda_0}$.
\begin{lemma}\label{unconstrainedII}
Let $\lambda_0$ be as in Lemma \ref{unconstrainedI}. Then $u_{\nu}^{\lambda_0}\in\mathcal{M}_{m,\nu}^{-n,\lambda_0+n}$ for any $n,m\in\mathbb{N}$.
\end{lemma}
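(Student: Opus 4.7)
The strategy is to verify admissibility and then establish minimality by combining two one-sided minimality claims through Lemma \ref{maxmin}. Admissibility of $u_\nu^{\lambda_0}$ in $\mathcal{A}_{m,\nu}^{-n,\lambda_0+n}$ is immediate: $(1,\nu)$-periodicity yields $(m,\nu)$-periodicity, the boundary condition of $\mathcal{A}_{1,\nu}^{0,\lambda_0}$ gives $u_\nu^{\lambda_0}=+1$ on $\{\langle P_2(z),\nu\rangle<0\}\supset\{\langle P_2(z),\nu\rangle<-n\}$, and Lemma \ref{unconstrainedI} gives $u_\nu^{\lambda_0}=-1$ on $\{\langle P_2(z),\nu\rangle\geq\lambda_0-\sqrt{2}\}\supset\{\langle P_2(z),\nu\rangle>\lambda_0+n\}$.

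For minimality I would first prove (Claim A) that $u_\nu^{\lambda_0}\in\mathcal{M}_{m,\nu}^{0,\lambda_0+n}$ by an induction covering the whole range $\lambda\geq\lambda_0$ in $\sqrt{2}$-steps, showing simultaneously the functional identity $u_\nu^\lambda=u_\nu^{\lambda_0}$. The step from $[\lambda_0,\lambda_0+k\sqrt{2}]$ to $[\lambda_0,\lambda_0+(k+1)\sqrt{2}]$ applies Lemma \ref{unconstrainedI} to $u_\nu^\lambda$ for $\lambda$ in the new slice to deduce $u_\nu^\lambda\in\mathcal{A}_{1,\nu}^{0,\lambda_0+k\sqrt{2}}$ (since $\lambda-\sqrt{2}\leq\lambda_0+k\sqrt{2}$); the inductive hypothesis then forces $E_M(u_\nu^\lambda,\mathcal{F}_{1,\nu})\geq E_M(u_\nu^{\lambda_0},\mathcal{F}_{1,\nu})$, while the reverse inequality is trivial from admissibility of $u_\nu^{\lambda_0}$ in $\mathcal{A}_{1,\nu}^{0,\lambda}$. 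Equality of energies together with the infimal minimizer property of $u_\nu^{\lambda_0}=u_\nu^{\lambda_0+k\sqrt{2}}$ in $\mathcal{A}_{1,\nu}^{0,\lambda_0+k\sqrt{2}}$ and of $u_\nu^\lambda$ in $\mathcal{A}_{1,\nu}^{0,\lambda}$ then upgrades to $u_\nu^\lambda=u_\nu^{\lambda_0}$; specialising to $\lambda=\lambda_0+n$ and invoking Lemma \ref{nosymmetrybreaking} yields Claim A.

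Claim B, namely $u_\nu^{\lambda_0}\in\mathcal{M}_{m,\nu}^{-n,\lambda_0}$, follows by translation: pick $k\in\mathbb{Z}^2$ with $\alpha:=\langle k,\nu\rangle\leq -n$, which is always possible for rational $\nu$. The argument of Claim A transfers verbatim to the translated classes $\mathcal{A}_{1,\nu}^{\alpha,\lambda}$ since all its ingredients (Lemmas \ref{unconstrainedI}, \ref{nosymmetrybreaking}, \ref{elementary}) are translation-invariant. This produces $\tau_k u_\nu^{\lambda_0}$ as the infimal minimizer of $\mathcal{A}_{m,\nu}^{\alpha,\lambda_0}$ (admissible since $\lambda_0\geq\lambda_0+\alpha$), with energy on $\mathcal{F}_{m,\nu}$ equal to $E_M(u_\nu^{\lambda_0},\mathcal{F}_{m,\nu})$ by Lemma \ref{elementary}. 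The inclusion $\mathcal{A}_{m,\nu}^{-n,\lambda_0}\subset\mathcal{A}_{m,\nu}^{\alpha,\lambda_0}$ turns this value into a lower bound for every competitor in the smaller class, and $u_\nu^{\lambda_0}$ itself attains it.

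To conclude, given an arbitrary $v\in\mathcal{A}_{m,\nu}^{-n,\lambda_0+n}$ set $p=\max\{v,u_\nu^{\lambda_0}\}$ and $q=\min\{v,u_\nu^{\lambda_0}\}$. A direct pointwise check (using the boundary data of $v$ and Lemma \ref{unconstrainedI}) gives $p\in\mathcal{A}_{m,\nu}^{0,\lambda_0+n}$ and $q\in\mathcal{A}_{m,\nu}^{-n,\lambda_0-\sqrt{2}}\subset\mathcal{A}_{m,\nu}^{-n,\lambda_0}$. Claims A and B then give $E_M(u_\nu^{\lambda_0},\mathcal{F}_{m,\nu})\leq E_M(p,\mathcal{F}_{m,\nu})$ and $E_M(u_\nu^{\lambda_0},\mathcal{F}_{m,\nu})\leq E_M(q,\mathcal{F}_{m,\nu})$; summing these and invoking Lemma \ref{maxmin} yields $2E_M(u_\nu^{\lambda_0},\mathcal{F}_{m,\nu})\leq E_M(p,\mathcal{F}_{m,\nu})+E_M(q,\mathcal{F}_{m,\nu})\leq E_M(v,\mathcal{F}_{m,\nu})+E_M(u_\nu^{\lambda_0},\mathcal{F}_{m,\nu})$, hence the desired minimality. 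The main technical hurdle is the inductive bootstrap of Claim A, where one must leverage both the equality of minimum energies and the infimal minimizer property at every $\sqrt{2}$-slice to promote energy equality to the functional identity $u_\nu^\lambda=u_\nu^{\lambda_0}$ before the rest of the construction can close.
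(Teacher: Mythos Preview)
Your proof is correct, but it takes a longer route than the paper's. Both arguments share the core step (your Claim~A / the paper's first claim) that $u_\nu^{\lambda_0}=u_\nu^{\lambda}$ for all $\lambda\ge\lambda_0$, and hence $u_\nu^{\lambda_0}\in\mathcal{M}_{m,\nu}^{0,\lambda_0+n}$ via Lemma~\ref{nosymmetrybreaking}. From there you branch off: you establish a symmetric Claim~B for the class $\mathcal{M}_{m,\nu}^{-n,\lambda_0}$ by translating the whole argument, and then stitch the two one-sided claims together through the $\max/\min$ decomposition and Lemma~\ref{maxmin}. The paper instead bypasses Claim~B and the $\max/\min$ trick entirely: given any competitor $v\in\mathcal{A}_{m,\nu}^{-n,\lambda_0+n}$, it simply picks $k\in\mathbb{Z}^2$ with $\langle k,\nu\rangle\ge n$, observes that $\tau_k v\in\mathcal{A}_{m,\nu}^{0,\lambda_0+n'}$ for some integer $n'$, and concludes $E_M(u_\nu^{\lambda_0},\mathcal{F}_{m,\nu})\le E_M(\tau_k v,\mathcal{F}_{m,\nu})=E_M(v,\mathcal{F}_{m,\nu})$ directly from Claim~A and Lemma~\ref{elementary}. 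This single translation of the \emph{competitor} replaces your entire second half; your approach still works, but the paper's is considerably shorter and avoids the need to verify the admissibility of $p$ and $q$ and to invoke Lemma~\ref{maxmin} at all.
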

\begin{proof}
We first claim that $u_{\nu}^{\lambda_0}=u_{\nu}^{\lambda_0+l}$ for any $l\in\mathbb{N}$. This will be done iteratively. First note that for any $\lambda\geq \lambda_0$ it holds that $u_{\nu}^{\lambda}\in\mathcal{A}_{1,\nu}^{0,\lambda+1}$ and by Lemma \ref{unconstrainedI} it also holds that $u_{\nu}^{\lambda+1}\in\mathcal{A}_{1,\nu}^{0,\lambda}$. Then
\begin{equation*}
E_M(u_{\nu}^{\lambda+1},\mathcal{F}_{1,\nu})=E_M(u_{\nu}^{\lambda},\mathcal{F}_{1,\nu})
\end{equation*} 
and both are infimal minimizers. Hence they must agree. This proves the first claim. 
\\
\hspace*{0,5cm}
Give an arbitrary configuration $v\in\mathcal{A}_{m,\nu}^{-n,\lambda_0+n}$ we choose a vector $k\in\mathbb{Z}^2$ such that $\langle k,\nu\rangle\geq n$ and $\langle k,\nu\rangle \in\mathbb{N}$. Then 
\begin{equation*}
\tau_kv\in \mathcal{A}_{m,\nu}^{-n+\langle k,\nu\rangle,\lambda_0+n+\langle k,\nu\rangle}\subset\mathcal{A}_{m,\nu}^{0,\lambda_0+n^{\prime}}
\end{equation*}
with $n^{\prime}\in\mathbb{N}$. Using the first claim and the Lemmata \ref{elementary} and \ref{nosymmetrybreaking} we obtain that $E_M(u_{\nu}^{\lambda_0},\mathcal{F}_{m,\nu})\leq E_M(\tau_kv,\mathcal{F}_{m,\nu})= E_M(v,\mathcal{F}_{m,\nu})$. As $u_{\nu}^{\lambda_0}\in\mathcal{A}_{m,\nu}^{-n,\lambda_0+n}$ we proved the claim.
\end{proof}
\begin{proof}[Proof of Theorem \ref{existenceplanelike}]
First assume that $\nu\in S^1\cap\mathbb{Q}^2$. We show that $u_{\nu}^{\lambda_0}$ is a ground state. To this end let $\Gamma\subset\mathbb{Z}^2$ be finite and let $v:\mathbb{Z}^2_M\to\{\pm 1\}$ be such that $v=u_{\nu}^{\lambda_0}$ on $\{z\in \mathbb{Z}^2_M:\;\exists z^{\prime}\in(\mathbb{Z}^2\backslash\Gamma)_M\text{ with }|z-z^{\prime}|\leq L\}$. Then we find $m\in\mathbb{N}$ such that, for a suitable fundamental domain, $\Gamma\subset \mathcal{F}_{m,\nu}$. By Lemma \ref{unconstrainedII} we have that $E_M(u_{\nu}^{\lambda_0},\mathcal{F}_{m,\nu})\leq E_M(v,\mathcal{F}_{m,\nu})$ and the claim then follows by Remark \ref{monotonicity}.
\\
\hspace*{0,5cm}
For general directions $\nu\in S^1$ we argue by approximation. Take a sequence $\nu_j\to\nu$ of rational directions and consider the sequence $u_j:=u_{\nu_j}^{\lambda_j}$ where $\lambda_j$ is uniformly bounded in $j$. By Tychonoff's theorem we can assume that $u_j\to u$ for some $u:\mathbb{Z}^2_M\to\{\pm 1\}$. It holds that $u$ is a plane-like configuration. By definition of the topology, given any finite set $\Gamma\subset\mathbb{Z}^2$ we find an index $j_0$ such that $u_j(x)=u(x)$ for all $x\in\Gamma_M$ and all $j\geq j_0$. Since we assume a finite range of interaction, the previous convergence property implies that $u$ is also a ground state. 
\end{proof}

\section{Density results for trace-constraints on partitions}
In this second appendix we show the density result needed in the proof of Theorem \ref{constrainedproblem}.
\begin{lemma}\label{strictinterior}
Let $A\subset\subset B$ be both bounded open sets with Lipschitz boundary. Given $v,w\in BV(B,\S)$ such that $\mathcal{H}^{k-1}(S_w\cap\partial A)=0$ we set $u=\mathds{1}_A v+(1-\mathds{1}_A)w$. Then there exists a sequence $A_{n}\subset\subset A$ of sets of finite perimeter such that $u_{n}:=\mathds{1}_{A_{n}}v+(1-\mathds{1}_{A_{n}})w$ converges to $u$ in $L^1(B)$ and additionally $\mathcal{H}^{k-1}(S_{u_n}\cap B)\to\mathcal{H}^{k-1}(S_u\cap B)$.
\end{lemma}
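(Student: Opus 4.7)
The approach is to define $A_n$ via the signed distance function $d_A$ to $\partial A$ (positive on $A$, negative on $B \setminus \overline{A}$, Lipschitz since $\partial A$ is Lipschitz) and set $A_n = \{d_A > t_n\}$ for a sequence $t_n \to 0^+$ chosen within a full-measure subset specified below. Then $A_n \subset\subset A$ and $A_n \uparrow A$, so $|A \setminus A_n| \to 0$. Since $u_n - u = (w - v)\mathds{1}_{A \setminus A_n}$ and both $v, w \in L^\infty(B)$, the $L^1$-convergence $u_n \to u$ is immediate.

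For the jump-set measure I would use the standard decomposition for piecewise $BV$ partitions. By the coarea formula applied to the $1$-Lipschitz function $d_A$, for a.e.\ $t > 0$ the set $A_t := \{d_A > t\}$ has finite perimeter with reduced boundary $\partial^{*} A_t = \{d_A = t\} \cap B$ up to $\Hk$-null sets. Letting $v^-, w^+$ denote the inner and outer traces of $v$ and $w$ on $\partial^{*} A_n$, this yields
\begin{equation*}
\Hk(S_{u_n} \cap B) = \Hk(S_v \cap A_n) + \Hk(S_w \cap (B \setminus \overline{A_n})) + \Hk(\{v^- \neq w^+\} \cap \partial^{*} A_n),
\end{equation*}
and the analogous identity for $\Hk(S_u \cap B)$ with $A$ in place of $A_n$ (the hypothesis $\Hk(S_w \cap \partial A) = 0$ ensures the outer trace of $w$ on $\partial A$ is unambiguously defined). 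The first summand converges upward to $\Hk(S_v \cap A)$ by monotone convergence on the nested sets $A_n$; the second, by dominated convergence applied to the decreasing sets $B \setminus \overline{A_n} \downarrow B \setminus A$ together with $\Hk(S_w \cap \partial A) = 0$, converges to $\Hk(S_w \cap (B \setminus \overline{A}))$.

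The third (interface) term is the main obstacle. I would handle it through a bi-Lipschitz tubular parametrization $\Psi : \partial A \times (-\delta, \delta) \to U$ of a neighborhood $U$ of $\partial A$ in $B$, chosen so that $\Psi(\partial A \times \{t\}) = \{d_A = t\} \cap U$; such a $\Psi$ exists because $\partial A$ is Lipschitz (by gluing Lipschitz graph representations and transporting along the normal). Pulling back via $\Psi$, standard $BV$ slicing theory applied to the bounded function $v$ (respectively $w$), regarded as $BV$ on the product $\partial A \times (-\delta, \delta)$, gives that for almost every sequence $t_n \to 0^+$ one has $v \circ \Psi(\cdot, t_n) \to v^A$ in $L^1(\partial A, \Hk)$ and likewise $w \circ \Psi(\cdot, t_n) \to w^A$. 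Choose $t_n$ in the intersection of all the full-measure sets described above; the bounded Jacobian of $\Psi$, which tends to $1$ uniformly as $t \to 0$, lets us rewrite the third summand as $\int_{\partial A} \mathds{1}_{\{v \circ \Psi(\cdot, t_n) \neq w \circ \Psi(\cdot, t_n)\}} J_n \, d\Hk$ with $J_n \to 1$ uniformly. Since $\S$ is finite, $L^1$-convergence of the pulled-back traces implies convergence in $\Hk$-measure of the set $\{v \circ \Psi(\cdot, t_n) \neq w \circ \Psi(\cdot, t_n)\}$ to $\{v^A \neq w^A\}$, and hence the third summand converges to $\Hk(\{v^A \neq w^A\} \cap \partial A)$. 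Combined with the convergence of the interior terms, this gives $\Hk(S_{u_n} \cap B) \to \Hk(S_u \cap B)$, completing the proof.
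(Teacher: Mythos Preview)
The argument has a genuine gap at the tubular-neighborhood step. For a merely Lipschitz domain $A$ there is in general \emph{no} bi-Lipschitz collar $\Psi:\partial A\times(-\delta,\delta)\to U$ with $\Psi(\partial A\times\{t\})=\{d_A=t\}$. ``Transporting along the normal'' does not produce one: the unit normal exists only $\Hk$-a.e.\ on $\partial A$, the map $x\mapsto x+t\nu(x)$ need not be injective or Lipschitz, and its image need not coincide with a distance level set (these properties hold for $C^{1,1}$ domains inside the reach, but not for Lipschitz $A$). Level sets of $d_A$ are rectifiable for a.e.\ $t$ by coarea, but their $\Hk$-measure can exceed $\Hk(\partial A)$ and there is no canonical bi-Lipschitz identification with $\partial A$ through which to pull back traces. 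So your control of the interface term $\Hk(\{v^-\neq w^+\}\cap\partial^*A_n)$ breaks down. A secondary issue: even granting a bi-Lipschitz $\Psi$, the claim that the tangential Jacobian $J_n\to 1$ uniformly needs $C^1$ regularity of $\Psi$, not just bi-Lipschitz bounds.

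The paper avoids all of this by invoking Schmidt's strict interior approximation (Proposition~4.1 in \cite{TS15}) for the function $\alpha=T(w)-T(v)$, $T(s_i)=e_i$: that result directly furnishes sets $A_\e\subset\subset A$ of finite perimeter with $\int_{\partial A_\e}|\alpha^+_{|\partial A_\e}|\,\mathrm{d}\Hk\le\int_{\partial A}|\alpha^+_{|\partial A}|\,\mathrm{d}\Hk+\e$, from which strict convergence $|DT(u_\e)|(B)\to|DT(u)|(B)$ follows; since all jumps in $T(\S)$ have size $\sqrt 2$, this yields the desired convergence of $\Hk(S_{u_\e}\cap B)$. Your strategy is salvageable if you abandon the distance sublevels and instead set $A_n:=\phi_{t_n}(A)$ for the flow $\phi_t$ of a \emph{smooth} vector field uniformly transversal to $\partial A$ (such a field exists for Lipschitz domains via local graph charts and a partition of unity); then $\partial A_n=\phi_{t_n}(\partial A)$, $\phi_{t_n}$ is a smooth diffeomorphism with $D\phi_{t_n}\to I$ uniformly, and your trace-convergence argument goes through.
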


\begin{proof}
We define the mapping $T:\S\to\mathbb{R}^q$ defined by $T(s_i)=e_i$. As a special case of Proposition 4.1 in \cite{TS15}, applied to the bounded $BV$-function $\a:=T(w)-T(v)$, for every $\e>0$ we find an open set $A_{\e}$ of finite perimeter such that $A_{\e}\subset\subset A$, $|A\backslash A_{\e}|\leq \e$ and
\begin{equation}\label{stricttrace}
\int_{\partial A_{\e}}|\a_{| \partial A_{\e}}^+|\,\mathrm{d}\mathcal{H}^{k-1}\leq \int_{\partial A}|\a_{|\partial A}^+|\,\mathrm{d}\mathcal{H}^{k-1}+\e.
\end{equation}
With the same arguments as in in \cite{TS15}, the sets $A_{\e}$ can be constructed in a way that for all $\delta>0$ there exists $\e_0>0$ such that for all $\e<\e_0$
\begin{equation}\label{fillset}
\{x\in A:\;{\rm dist}(x,\partial A)>\delta\}\subset A_{\e}.
\end{equation}
We show that the sets $A_{\e}$ fulfill the required properties. As a first step we claim that $T(u_{\e})$ converges strictly to $T(u)$. We have that $T(u_{\e})$ converges to $T(u)$ in $L^1(B)$. By lower semicontinuity of the total variation it is enough to show that 
\begin{equation}\label{strictproperty}
\limsup_{\e\to 0}|DT(u_{\e})|(B)\leq |DT(u)|(B).
\end{equation}
By definition we have $|DT(u_{\e})|(B\backslash\overline{A})=|DT(u)|(B\backslash\overline{A})$, so that we can reduce the analysis to $\overline{A}$. By Theorem 3.84 in \cite{AFP} it holds that
\begin{equation*}
DT(u_{\e})=DT(v)\LL A_{\e}^{(1)}+DT(w)\LL A_{\e}^{(0)}+(T(v)_{|\partial A_{\e}}^+-T(w)_{|\partial A_{\e}}^-)\otimes\nu\,\mathcal{H}^{k-1}\LL\partial A_{\e},
\end{equation*}
where in general $A_{\e}^{(t)}$ is defined for $t\in[0,1]$ via
\begin{equation*}
A_{\e}^{(t)}=\left\{x\in\mathbb{R}^k:\;\lim_{\rho\to 0}\frac{|A_{\e}\cap B_{\rho}(x)|}{|B_{\rho}(x)|}=t\right\}.
\end{equation*}
Since $A_{\e}\subset\subset A$ and $A_{\e}$ is open we infer $A_{\e}^{(1)}\subset A$ and $A_{\e}^{(0)}\subset \mathbb{R}^k\backslash A_{\e}$, so that
\begin{align*}
|DT(u_{\e})|(\overline{A})\leq&|DT(v)|(A)+|DT(w)|(\overline{A}\backslash A_{\e})+\int_{\partial A_{\e}}|T(v)_{|\partial A_{\e}}^+-T(w)_{|\partial A_{\e}}^-|\,\mathrm{d}\mathcal{H}^{k-1}
\\
\leq &|DT(v)|(A)+|DT(w)|(\overline{A}\backslash A_{\e})+\int_{\partial A_{\e}}|T(w)_{|\partial A_{\e}}^+-T(w)_{|\partial A_{\e}}^-|\,\mathrm{d}\mathcal{H}^{k-1}
\\
&+\int_{\partial A_{\e}}|T(v)_{|\partial A_{\e}}^{+}-T(w)_{|\partial A_{\e}}^+|\,\mathrm{d}\mathcal{H}^{k-1}.
\end{align*}
By assumption on $w$ we have $|DT(w)|(\partial A)=0$, so that by (\ref{fillset}) the second and the third term vanish when $\e\to 0$. For the fourth one we use (\ref{stricttrace}) and infer
\begin{align*}
\limsup_{\e\to 0}|DT(u_{\e})|(\overline{A}) &\leq |DT(v)|(A)+\int_{\partial A}|T(v)_{|\partial A}^{+}-T(w)_{|\partial A}^+|\,\mathrm{d}\mathcal{H}^{k-1}
\\
&=|DT(v)|(A)+\int_{\partial A}|T(v)_{|\partial A}^{+}-T(w)_{|\partial A}^-|\,\mathrm{d}\mathcal{H}^{k-1}=|DT(u)|(\overline{A}),
\end{align*}
where we used that inner and outer trace of $T(w)$ agree for $\mathcal{H}^{k-1}$-almost every $x\in\partial A$. By the structure of the set $T(\S)$ strict convergence implies that 
\begin{equation*}
\mathcal{H}^{k-1}(S_{T(u_{\e})}\cap B)=\frac{1}{\sqrt{2}}|DT(u_{\e})|\to\frac{1}{\sqrt{2}}|DT(u)|=\mathcal{H}^{k-1}(S_{T(u)}\cap B).
\end{equation*}	
As for every $u\in BV(B,\S)$ it holds that $\mathcal{H}^{k-1}(S_u\cap B)=\mathcal{H}^{k-1}(S_{T(u)}\cap B)$ and also $L^1$-convergence is conserved, we conclude the proof.
\end{proof}

\section*{Acknowledgements}

The work of MC was supported by the DFG Collaborative Research Center TRR 109, ``Discretization in Geometry and Dynamics''.

\end{document}